\theoremstyle{plain}
\newtheorem{thm}{Theorem}[section]
\newtheorem{prop}[thm]{Proposition}
\newtheorem{cor}[thm]{Corollary}
\newtheorem{lemma}[thm]{Lemma}
\theoremstyle{definition}
\newtheorem{dfn}[thm]{Definition}
\newtheorem{rmk}[thm]{Remark}
\newtheorem{ex}[thm]{Example}
\newtheorem{question}[thm]{Question}
\newcommand{\Acal}{\mathcal{A}} \newcommand{\Bcal}{\mathcal{B}} \newcommand{\Ccal}{\mathcal{C}}
  \newcommand{\Fcal}{\mathcal{F}}
\newcommand{\Gcal}{\mathcal{G}} \newcommand{\Hcal}{\mathcal{H}}
\newcommand{\Pcal}{\mathcal{P}}  
\newcommand{\Scal}{\mathcal{S}} \newcommand{\Tcal}{\mathcal{T}} \newcommand{\Ucal}{\mathcal{U}}
\newcommand{\Vcal}{\mathcal{V}}  \newcommand{\Xcal}{\mathcal{X}}
\newcommand{\Ycal}{\mathcal{Y}} \newcommand{\Zcal}{\mathcal{Z}}
\newcommand{\Abb}{\mathbb{A}}  
\newcommand{\Dbb}{\mathbb{D}}
\newcommand{\Pbb}{\mathbb{P}}
 \newcommand{\Zbb}{\mathbb{Z}}
\renewcommand{\phi}{\varphi}
\renewcommand{\epsilon}{\varepsilon}
\renewcommand{\theta}{\vartheta}
\DeclareMathOperator{\gldim}{gldim}
\DeclareMathOperator{\Hom}{\mathsf{Hom}}
\DeclareMathOperator{\End}{\mathsf{End}}
\DeclareMathOperator{\Proj}{\mathsf{Proj}}
\DeclareMathOperator{\proj}{\mathsf{proj}}
\DeclareMathOperator{\inj}{\mathsf{inj}}
\DeclareMathOperator{\Inj}{\mathsf{Inj}}
\DeclareMathOperator{\Add}{\mathsf{Add}}
\DeclareMathOperator{\add}{\mathsf{add}}
\DeclareMathOperator{\gen}{\mathsf{gen}}
\DeclareMathOperator{\cogen}{\mathsf{cogen}}
\DeclareMathOperator{\tr}{\mathsf{tr}}
\DeclareMathOperator{\rej}{\mathsf{rej}}
\DeclareMathOperator{\rad}{\mathsf{rad}}
\DeclareMathOperator{\sub}{\mathsf{cogen}}
\DeclareMathOperator{\fac}{\mathsf{gen}}
\DeclareMathOperator{\simp}{\mathsf{simp}}
\DeclareMathOperator{\soc}{\mathsf{soc}}
\let\top\relax
\DeclareMathOperator{\top}{\mathsf{top}}
\DeclareMathOperator{\filt}{\mathsf{filt}}
\DeclareMathOperator{\Ext}{\mathsf{Ext}}
\DeclareMathOperator{\ftors}{\mathsf{f-tors}}
\DeclareMathOperator{\tors}{\mathsf{tors}}
\DeclareMathOperator{\D}{\mathsf{D}}
\DeclareMathOperator{\K}{\mathsf{K}}
\DeclareMathOperator{\Mod}{\mathsf{Mod}}
\let\mod\relax
\DeclareMathOperator{\mod}{\mathsf{mod}}
\DeclareMathOperator{\Spec}{\mathsf{Spec}}
\DeclareMathOperator{\Ass}{\mathsf{Ass}}
\DeclareMathOperator{\supp}{\mathsf{supp}}
\newcommand{\pf}{\mathfrak{p}}
\newcommand{\dfour}[4]{\begin{smallmatrix}#1#2\!\begin{smallmatrix}#3\\#4\end{smallmatrix}\end{smallmatrix}}
\begin{document}

\title{Detecting derived equivalences with the CHZ criterion}
\author{Sergio Pavon}
\address[Sergio Pavon]{Dipartimento di Informatica, Università degli Studi di
Verona, Strada le Grazie 15 37134 Verona, Italy.}
\email{sergio.pavon@univr.it}
\subjclass[2020]{18E40, 18G80, 16S90, 13D30}

\begin{abstract}
	In 2018, Chen, Han and Zhou introduced a criterion to determine whether the
	HRS-tilt at a given torsion pair induces derived equivalence. We showcase four
	applications of this criterion: to injective- or projective-splitting torsion
	pairs in abelian with enough injective or projectives
	(which we prove to always induce derived equivalence), to abelian
	categories of global dimension at most two, to (co)hereditary torsion pairs
	over artin algebras (for which we give a purely combinatorial criterion for
	derived equivalence), and to study whether irreducible silting mutation acts
	transitively on two-term tilting complexes over a finite dimensional algebra.
\end{abstract}

\maketitle

\section*{Introduction}

The celebrated Tilting Theorem of Brenner and Butler \cite{bren-butl-80} provided examples
of abelian categories $\Acal$ and $\Bcal$ with torsion pairs
$\tau_\Acal=(\Tcal_\Acal,\Fcal_\Acal)$ in $\Acal$ and
$\tau_\Bcal=(\Tcal_\Bcal,\Fcal_\Bcal)$ in $\Bcal$, respectively, such that
$\Tcal_\Acal\simeq\Fcal_\Bcal$ and $\Fcal_\Acal\simeq\Tcal_\Bcal$. This led to
the introduction, by Happel, Reiten and Smalø \cite{happ-reit-smal-96}, of a general framework
for such a phenomenon. Given an abelian category $\Acal$ with an arbitrary
torsion pair $\tau=(\Tcal,\Fcal)$, the \emph{HRS-tilting} procedure yields an
abelian category $\Acal_\tau$ with a torsion pair $\bar\tau$ with this property
(see \S\ref{subsec:hrs-tilting} for the details). By construction, this abelian
category lies in the bounded derived category $\D^b(\Acal)$ of $\Acal$, and
therefore it is natural to ask how $\D^b(\Acal)$ compares with
$\D^b(\Acal_\tau)$. The torsion pair $\tau$ is said to induce \emph{derived
equivalence} if these two derived categories are triangle equivalent, in a
natural way (see \S\ref{subsec:derived-equivalence}).

There are several reasons for wanting to know whether a certain torsion pair
induces derived equivalence. For example, if the torsion pair comes from
(co)silting theory, this will detect whether the associated (co)silting object is
actually (co)tilting \cite[Prop.~5.1]{psar-vito-18}.
Another reason is the fact that several classes of algebras are closed under
derived equivalence: for example, piecewise-hereditary algebras (by definition),
self-injective algebras \cite[Thm.~A.5]{aiha-13}, \cite[Thm.~2.1]{alno-rick-13},
symmetric algebras \cite[Cor.~5.3]{rick-91}, Brauer graph algebras
\cite{anti-zvon-22}. When an HRS-tilt from $\mod\Lambda$ produces another category of modules over
an algebra (which happens when the torsion pair is \emph{functorially finite})
recognising that it gives derived equivalence guarantees that the new algebra is
still in the class. As a last example, HRS-tilting a category of modules $\Mod
A$ can produce an abelian category $\Hcal$ which is not a category of modules
anymore. However, if we have derived equivalence (which under some assumptions
can be promoted to the level of unbounded derived categories), we can infer
properties of the derived category $\D(\Hcal)\simeq\D(A)$, such as the fact that
it is compactly generated (this was a key ingredient in
\cite[\S3.3]{hrbe-pavo-23}).

In 2018, Chen, Han and Zhou \cite{chen-han-zhou-19} introduced a homological
criterion which completely characterises the torsion pairs in an abelian
category which induce derived equivalence. This criterion has the advantage of
being internal to the abelian category, instead of referring to the derived
category. The goal of this paper is to showcase the practical application of
this criterion in various contexts. We want to convice the reader that it proves
to be a useful computational tool in detecting derived equivalences in
several concrete situations, while at the same time it allows to draw
theoretical consequences which would be difficult to prove otherwise.

\medskip

\textbf{Structure of the paper.} In Section~\ref{sec:preliminaries} we recall the necessary
preliminaries and the criterion by Chen, Han and Zhou (Theorem~\ref{thm:chz}).

In Section~\ref{sec:general-applications}, we cover the first two applications, which take place in a general
setting. In \S\ref{subsec:stable}, we show that, in an abelian category with
enough injectives (respectively, projectives), all \emph{injective-splitting}
(respectively \emph{projective-splitting}) torsion pairs
induce derived equivalence. We give several examples of torsion pairs with
these properties (Example~\ref{ex:stable}), which
notably include all hereditary torsion pairs over a commutative noetherian ring
(Proposition~\ref{prop:stable-commutative-noetherian}), all torsion pairs over a
regular commutative noetherian ring (Proposition~\ref{prop:regular}), and all
torsion pairs of the category of quasi-coherent sheaves over the
projective line (Proposition~\ref{prop:sheaves}). In \S\ref{subsec:gldim2}, we
instead consider abelian categories of global dimension at most two. In this
setting, we show that if the category admits a noetherian generator
(respectively, artinian cogenerator) $M$, the torsion and torsion-free part of
$M$ are enough to determine whether a given torsion pair induces derived
equivalence (Proposition~\ref{prop:gldim2}). As a consequence, if such an object
$M$ exists, the class of torsion pairs giving derived equivalence is closed
under taking joins (respectively, meets) of arbitrary chains in the lattice of
torsion pairs (Proposition~\ref{prop:gldim2-chains}). For example, both these
results apply to the category of modules over a finite-dimensional algebra of
global dimension at most two.

In Section~\ref{sec:artin-applications}, we specialise to the category
$\mod(\Lambda)$ of modules over an artin (or finite-dimensional) algebra
$\Lambda$. In \S\ref{subsec:cohereditary}, we focus on (co)hereditary
torsion pairs, and obtain a purely combinatorial criterion to detect when they
induce derived equivalence. In particular, if $\Lambda$ is the path algebra of a
quiver with relations, our criterion has to do with prolonging certain paths in the
quiver, see Corollary~\ref{cor:path-algebra}, and it allows for estimates on the
number of (co)hereditary torsion pairs inducing derived equivalence.
As a last application of the criterion by Chen, Han and Zhou, in
\S\ref{subsec:2tilting} we consider the question of when the subset of the
lattice of torsion pairs consisting of those which induce derived equivalence is
a union of maximal chains. This is related to the question of whether irreducible
silting mutation acts transitively on the set of two-term tilting complexes. We
give a sufficient condition on $\Lambda$, which we call \emph{2-tilting
acyclicity}, which allows to construct ascending and
descending chains of torsion pairs inducing derived equivalence
(Proposition~\ref{prop:2-tilting-acyclic}). As a consequence, for this class of algebras
(which includes quasi-hereditary algebras and Nakayama algebras)
we obtain an analogue of the $\tau$-tilting-finiteness theorem by Demonet, Iyama
and Jasso \cite{demo-iyam-jass-19}, saying that when only finitely many
functorially finite torsion pairs induce derived equivalence, there are no other
torsion pairs inducing derived equivalence (Corollary~\ref{cor:dij-derived}).

\medskip

\textbf{Acknowledgments.} We would like to thank Rosanna Laking and Jorge
Vitória for discussions and suggestions regarding the topics of this paper. The
author was supported by NextGenerationEU under NRRP, Call PRIN 2022 No.\ 104 of
February 2, 2022 of Italian Ministry of University and Research; Project
2022S97PMY Structures for Quivers, Algebras and Representations (SQUARE).

\section{Preliminaries}\label{sec:preliminaries}

\subsection{Notation}

All subcategories we consider are strict and full. When we consider an abelian
category $\Acal$, we implicitly assume that it admits a bounded derived
category, denoted by $\D^b(\Acal)$. We write $\Acal(-,-)$ for the
$\Hom$-bifunctor of $\Acal$, and $\Acal^i(-,-)$ for the $i$-th
$\Ext$-bifunctor of $\Acal$. For subcategories $\Xcal,\Ycal\subseteq\Acal$, we
denote their span by
\[\Xcal\ast\Ycal:=\{\,a\in\Acal\mid \exists\;0\to x\to a\to y\to 0\text{ exact
with }x\in\Xcal, y\in\Ycal\,\}.\]
We use the same notation for the analogous notion in a triangulated category,
which uses distinguished triangles instead of short exact sequences.

For a subcategory $\Ccal\subseteq\Acal$, we denote its orthogonals by
\[\Ccal^\bot:=\{\,a\in\Acal\mid
\Acal(\Ccal,a)=0\,\}\qquad{}^\bot\Ccal:=\{\,a\in\Acal\mid\Acal(a,\Ccal)=0\,\}.\]

For a subcategory $\Ccal\subseteq\Acal$, the subcategory \emph{generated}
(respectively, \emph{cogenerated}) by $\Ccal$ is the subcategory
$\gen\Ccal\subseteq\Acal$ (respectively,
$\cogen\Ccal\subseteq\Acal$) consisting of quotients (respectively, subobjects)
of finite direct sums
of objects of $\Ccal$. If $\Ccal=\{c\}$, we write $\gen c$ and $\cogen c$. We
say that $\Ccal$ is (co)generating in $\Acal$ (or $c$ is a (co)generator of
$\Acal$) if $\Ccal$ (or $\{c\}$) (co)generates $\Acal$.

\subsection{Torsion pairs}

Let $\Acal$ be an abelian category.

\begin{dfn}
	A pair $(\Tcal,\Fcal)$ of subcategories of $\Acal$ is a \emph{torsion pair} if:
	\begin{enumerate}
		\item $\Acal(\Tcal,\Fcal)=0$;
		\item $\Acal=\Tcal\ast\Fcal$.
	\end{enumerate}
\end{dfn}

If $\tau=(\Tcal,\Fcal)$ is a torsion pair, for every object $a\in\Acal$ there is a
sequence $0\to t\to a\to f\to 0$ with $t\in\Tcal$ and $f\in\Fcal$, which is
unique up to isomorphism. We call this sequence the \emph{torsion sequence} of
$a$ with respect to $\tau$, and $t$ and $f$ the \emph{torsion} and
\emph{torsion-free part} of $a$, respectively. The classes $\Tcal$ and $\Fcal$
are the \emph{torsion} and \emph{torsion-free class} of $\tau$, respectively.

Each of the two classes of a torsion pair determines the other, as we have
$\Tcal^\bot=\Fcal$ and $\Tcal={}^\bot\Fcal$. From this it also follows that
$\Tcal$ is closed under extensions, quotients and existing coproducts, while
$\Fcal$ is closed under extensions, subobjects and existing products. Under some
assumptions on $\Acal$ (for example if it is well-powered and cocomplete, or
noetherian) torsion classes are characterised by these closure properties. The
same is true for torsion-free classes, under the dual hypotheses.

A torsion pair $\tau=(\Tcal,\Fcal)$ is said to be \emph{hereditary} if $\Tcal$
is closed under subobjects, and \emph{cohereditary} if $\Fcal$ is closed under
quotients. A \emph{TTF-triple} is a triple $(\Xcal,\Ycal,\Zcal)$ such that both
$(\Xcal,\Ycal)$ and $(\Ycal,\Zcal)$ are torsion pairs (necessarily cohereditary
and hereditary, respectively). In this case $\Ycal$ is called a
\emph{TTF-class}. Observe that both in a Grothendieck category and in a length
category, a subcategory $\Ycal$ is a TTF-class if and only if it is closed under
subobjects, quotients, extensions and existing products (and therefore also
existing coproducts).

\subsection{The lattice of torsion pairs}\label{subsec:lattice}
Let $\Acal$ be an abelian category. We denote by $\tors{\Acal}$ the class of all
torsion pairs of $\Acal$. If $\Acal$ is skeletally small, this is a proper set.
Otherwise it may not be (arguing as in \cite[\S8]{stan-10}), but this will not affect the way we are going
to use it.

We order torsion
pairs by inclusion of the torsion classes: for $\tau_1=(\Tcal_1,\Fcal_1)$ and
$\tau_2=(\Tcal_2,\Fcal_2)$ in $\tors\Acal$, we set $\tau_1\leq \tau_2$ if
$\Tcal_1\subseteq\Tcal_2$ (which is equivalent to $\Fcal_1\supseteq\Fcal_2$).
This makes $\tors{\Acal}$ into a poset, which has extremal elements
$\mathbf{0}:=(0,\Acal)$ and $\mathbf{1}:=(\Acal,0)$.

Whenever both torsion and torsion-free classes are characterised by closure
properties (for example, when $\Acal$ is a length category, or a Grothendieck
category), $\tors\Acal$ is also a complete lattice, with lattice
operations:
\[\textstyle\bigwedge_{i\in I}(\Tcal_i,\Fcal_i)=(\bigcap_{i\in I}\Tcal_i,(\bigcap_{i\in I}\Tcal_i)^\bot)
\qquad
\bigvee_{i\in I}(\Tcal_i,\Fcal_i)=({}^\bot(\bigcap_{i\in
I}\Fcal_i),\bigcap_{i\in I}\Fcal_i).\] 

We now introduce a new partition of the lattice $\tors\Acal$, induced by a fixed
object $a\in\Acal$.
For any short exact sequence $\epsilon\colon 0\to t\to a\to f\to 0$,
denote by $\tors(a/\epsilon)$ the class of torsion pairs
$(\Tcal,\Fcal)\in\tors\Acal$ for which $t\in\Tcal$ and $f\in\Fcal$. In other
words, these are the torsion pairs for which $\epsilon$ is the torsion sequence
of $a$, and therefore these sets form a partition of $\tors\Acal$ when we let
$\epsilon$ range over all short exact sequences with middle term $a$.
For example, this partition consists of exactly two (nonempty)
subsets if and only if $a$ is a nonzero \emph{torsion-simple} object, that is,
it is always torsion or torsion-free, for any torsion pair \cite{pavo-25a}.

Notice that the class $\tors(a/\epsilon)$ is clearly empty if $\Acal(t,f)\neq
0$. Now assume that the torsion pairs generated and cogenerated by an object
exist (this is the case if either torsion or torsion-free classes are
characterised by closure properties in $\Acal$). Then, if $\Acal(t,f)=0$, any
torsion pair $(\Tcal,\Fcal)\in\tors(a/\epsilon)$ will have $t\in\Tcal$ and
$f\in\Fcal$, and therefore by definition it will be above the torsion pair
generated by $t$ and below that cogenerated by $f$. This shows that
$\tors(a/\epsilon)$ is an interval, with those two pairs as the extremes.
For examples, see Figures~\ref{fig:partition} and~\ref{fig:kronecker}.

%{{{ A3 picture
\newcommand{\symb}[1] {%
	\IfEqCase{#1}{%
        {t}{\bullet}%
        {f}{\circ}%
				{m}{\ast}%
		}%
}
\newcommand{\ar}[6] %
	{\begin{smallmatrix}\symb#6\\[-2pt]\symb#4 \symb#5\\[-2pt]\symb#1 \symb#2 \symb#3\end{smallmatrix}}

\usetikzlibrary{decorations.text, fit, positioning, quotes, backgrounds,calc}
\begin{figure}
\begin{tikzpicture}[on top/.style={preaction={draw=white,-,line width=#1}},
on top/.default=6pt, yscale=.7]
	
	\node (At) at (0.5,4.5) {$\ar tttttt$};
	\node (Bt) at (-2.5,1.5) {$\ar tftmtt$};
	\node (Ct) at (0.5,2.5) {$\ar fttttt$};
	\node (Dt) at (2.5,2.5) {$\ar ttftmm$};
	\node (Et) at (-0.5,1.5) {$\ar fttftt$};
	\node (Ft) at (2.5,0.5) {$\ar ftftmm$};
	\node (Gt) at (-1.5,0.5) {$\ar fftftt$};

	\node (Ab) at (-0.5,-4.5) {$\ar ffffff$};
	\node (Bb) at (2.5,-1.5) {$\ar ftffmf$};
	\node (Cb) at (-0.5,-2.5) {$\ar fftfff$};
	\node (Db) at (-2.5,-2.5) {$\ar tffmfm$};
	\node (Eb) at (0.5,-1.5) {$\ar fftftf$};
	\node (Fb) at (-2.5,-0.5) {$\ar tftmfm$};
	\node (Gb) at (1.5,-0.5) {$\ar fttftf$};
	
	\draw[-] (At) -- (Bt);
	\draw[-] (At) -- (Ct);
	\draw[-] (At) -- (Dt);
	\draw[-] (Bt) -- (Gt);
	\draw[-] (Bt) -- (Fb);
	\draw[-] (Ct) -- (Et);
	\draw[-] (Ct) -- (Ft);
	\draw[-] (Dt) -- (Ft);
	\draw[-] (Et) -- (Gt);

	\draw[-] (Ab) -- (Bb);
	\draw[-] (Ab) -- (Cb);
	\draw[-] (Ab) -- (Db);
	\draw[-] (Bb) -- (Gb);
	\draw[-] (Bb) -- (Ft);
	\draw[-] (Cb) -- (Eb);
	\draw[-] (Cb) -- (Fb);
	\draw[-] (Db) -- (Fb);
	\draw[-] (Eb) -- (Gb);

	\draw[-] (Gt) -- (Eb);
	\draw[-] (Et) -- (Gb);
	\draw[-] (Dt) -- (Db);

	\begin{scope}[on background layer]
		\draw[fill=lightgray,color=lightgray,line width=24pt,line cap=round,rounded corners=1pt]
			($(At.center)+(90:4pt)$) -- ($(Gb.center)+(315:4pt)$) --
			($(Eb.center)+(270:4pt)$) -- ($(Bt.center)+(135:4pt)$) -- cycle;
			;
%		\draw[color=lightgray,line width=24pt,line cap=round,rounded corners=1pt]
%			($(Bt.center)+(135:4pt)$) -- ($(Gt)+(315:4pt)$);
		\draw[color=lightgray,line width=24pt,line cap=round,rounded corners=1pt]
			($(Dt.center)+(90:4pt)$) -- ($(Bb.center)+(270:4pt)$);
		\draw[color=lightgray,line width=24pt,line cap=round,rounded corners=1pt]
			($(Fb.center)+(135:4pt)$) -- ($(Cb.center)+(315:4pt)$);
		\draw[color=lightgray,line width=24pt,line cap=round,rounded corners=1pt]
			($(Ab.center)+(315:1pt)$) -- ($(Db.center)+(315:1pt)$);
	\end{scope}
\end{tikzpicture}
\caption{The Hasse quiver of $\tors(\Lambda)$, for the algebra
$\Lambda=k(\bullet\to\bullet\to\bullet)$. Each torsion pair is represented with the
shape of the Auslander--Reiten quiver of $\Lambda$, marking the torsion modules with
$\bullet$, the torsion-free modules with $\circ$ and the others with $\ast$. If
we choose $M$ to be the direct sum of the indecomposable injectives which are
not projective, the corresponding partition of $\tors(\Lambda)$ into the sets
$\tors(M/\epsilon)$ is shaded in gray. By definition, each torsion pair in a
given class of this partition induces the same torsion sequence for $M$.}\label{fig:partition} 
\end{figure}
%}}}

\subsection{$t$-structures}

In $\D^b(\Acal)$, as a triangulated version of the notion of torsion pair, we
will consider the following notion.

\begin{dfn}
	A pair $(\Ucal,\Vcal)$ of subcategories of $\D^b(\Acal)$ is a
	\emph{$t$-structure} if:
	\begin{enumerate}
		\item $\Ucal[1]\subseteq\Ucal$ (or equivalently, $\Vcal[-1]\subseteq\Vcal$);
		\item $\D^b(\Acal)(\Ucal,\Vcal)=0$;
		\item $\D^b(\Acal)=\Ucal\ast\Vcal$.
	\end{enumerate}
\end{dfn}

As for torsion pairs, for any $z\in\D^b(\Acal)$ there is a triangle $u\to z\to
v\to u[1]$ with $u\in\Ucal$ and $v\in\Vcal$, which is unique up to unique isomorphism.
The classes $\Ucal$ and $\Vcal$ are the \emph{aisle} and the \emph{coaisle} of
the $t$-structure, and $u$ and $v$ are respectively the \emph{left} and the
\emph{right truncation} of $z$.

The \emph{heart} of a $t$-structure $(\Ucal,\Vcal)$ is the subcategory
$\Ucal[-1]\cap\Vcal$, which is abelian \cite[Thm.~1.3.6]{bbd-81}. The short
exact sequences of this category are precisely the triangles of $\D^b(\Acal)$
whose terms lie in the heart.

As an example, the \emph{standard $t$-structure} of $\D^b(\Acal)$ is the pair
$(\Ucal_0,\Vcal_0)$ given by:
\begin{align*}
	\Ucal_0&:=\{\,x\in\D^b(\Acal) \mid H^i(x)=0 \text{ for every }i\geq 0\,\} \\
	\Vcal_0&:=\{\,x\in\D^b(\Acal)\mid H^i(x)=0 \text{ for every }i<0\,\}
\end{align*}
with heart $\Ucal_0[-1]\cap\Vcal_0\simeq \Acal$.

\subsection{HRS-tilting}\label{subsec:hrs-tilting}

Let $\Acal$ be an abelian category, $(\Ucal,\Vcal)$ a $t$-structure in
$\D^b(\Acal)$ with heart $\Hcal=\Ucal[-1]\cap\Vcal$ and $\tau=(\Tcal,\Fcal)$ a
torsion pair in $\Hcal$. In \cite{happ-reit-smal-96}, Happel, Reiten and Smalø introduced the
following construction.

\begin{dfn}
	There is a $t$-structure in $\D^b(\Acal)$:
	\[(\Ucal_\tau,\Vcal_\tau)=(\Ucal\ast\Tcal,\Fcal\ast\Vcal[-1])\text{ with heart }
	\Hcal_\tau=\Fcal\ast\Tcal[-1],\]
	which is called the (right) \emph{HRS-tilt of
	$\Hcal$ at $\tau$ in $\D^b(\Acal)$}.
\end{dfn}

This construction yields a bijection between the class of torsion pairs of
$\Hcal$ and that of $t$-structures $(\Ucal',\Vcal')$ of $\D^b(\Acal)$ with the
property that $\Ucal\subseteq \Ucal'\subseteq\Ucal[-1]$. The
inverse of this bijection associates to such a $t$-structure $(\Ucal',\Vcal')$ the
pair $(\Ucal'\cap\Hcal,\Vcal'\cap\Hcal)$, which is a torsion pair in $\Hcal$.

For a torsion pair $\tau=(\Tcal,\Fcal)$ in $\Hcal$, observe that since we have
$\Ucal\subseteq\Ucal_\tau\subseteq\Ucal[-1]$, we also have
$\Ucal_\tau\subseteq\Ucal[-1]\subseteq\Ucal_\tau[-1]$. Therefore, the shifted
$t$-structure $(\Ucal[-1],\Vcal[-1])$ is obtained by HRS-tilting
$(\Ucal_\tau,\Vcal_\tau$), at the following torsion pair in
$\Hcal_\tau=\Fcal\ast\Tcal[-1]$:
\[(\Ucal[-1]\cap\Hcal_\tau,\Vcal[-1]\cap\Hcal_\tau)=(\Fcal,\Tcal[-1])=:\bar\tau.\]
This torsion pair was considered already in \cite{happ-reit-smal-96}.

Now, let $\sigma$ be another torsion pair in $\Hcal$. Observe that we have
$\tau\leq\sigma$ if and only if
$\Ucal_\tau\subseteq\Ucal_\sigma\subseteq\Ucal[-1]$. By adding the inclusion
$\Ucal\subseteq\Ucal_\tau$ and its shift, we see that this is equivalent to
having the following chain of inclusions:
\[\Ucal\subseteq\Ucal_\tau\subseteq\Ucal_\sigma\subseteq\Ucal[-1]\subseteq\Ucal_\tau[-1],\]
that is, to the fact that $(\Ucal_\sigma,\Vcal_\sigma)$ can be obtained by
HRS-tilting from both $(\Ucal,\Vcal)$ and $(\Ucal_\tau,\Vcal_\tau)$. From the
point of view of $\Hcal_\tau$, this is the HRS-tilt at a pair $\bar\sigma\leq
\bar\tau$. This argument shows that there is an order preserving bijection
between the intervals:
\[\{\sigma\text{ in }\Hcal\mid \tau\leq \sigma\leq \mathbf{1}\}
\;\longleftrightarrow\;
\{\bar\sigma\text{ in }\Hcal_\tau\mid \mathbf{0}\leq\bar\sigma\leq \bar\tau \},
\]
see \cite[Prop.~7.5]{ange-laki-stov-vito-22}.
As mentioned, we also have $\Hcal[-1]=(\Hcal_\tau)_{\bar\tau}$. Applying this
isomorphism of intervals (with a shift), also yields another isomorphism of
intervals:
\[
\{\bar\sigma\text{ in }\Hcal_\tau\mid \bar\tau\leq
\bar\sigma\leq\mathbf{1}\}\;\longleftrightarrow\;
\{\sigma\text{ in }\Hcal[-1]\mid \mathbf{0}\leq\sigma[-1]\leq \tau[-1]\}=
\{\sigma\text{ in }\Hcal\mid \mathbf{0}\leq\sigma\leq \tau\}.
\]

\subsection{Derived equivalence and the CHZ
criterion}\label{subsec:derived-equivalence}

Let $\Acal$ be an abelian category, and $\tau=(\Tcal,\Fcal)$ a torsion pair in
$\Acal$. The $t$-structure $(\Ucal_\tau,\Vcal_\tau)$ with heart $\Acal_\tau$ obtained by HRS-tilting the
standard $t$-structure of $\D^b(\Acal)$ brings along a \emph{realisation
functor}, which is a triangle functor $\D^b(\Acal_\tau)\to \D^b(\Acal)$
extending the inclusion $\Acal_\tau\hookrightarrow\D^b(\Acal)$ \cite{bbd-81}. We say
that $\tau$ \emph{induces derived equivalence} if the realisation functor is an
equivalence $\D^b(\Acal_\tau)\simeq\D^b(\Acal)$.
We will write $\tors^d{\Acal}\subseteq\tors{\Acal}$ for the class of torsion
pairs inducing derived equivalence.

\begin{thm}{{\cite[Thm.~A]{chen-han-zhou-19}}}\label{thm:chz}
	Let $\Acal$ be an abelian category, and $\tau=(\Tcal,\Fcal)$ a torsion pair in
	$\Acal$. Then $\tau$ induces derived equivalence if and only if for every
	object $a\in\Acal$ there is an exact sequence
	\[0\to f_0\to f_1\to a\to t_0\to t_1\to 0,\]
	with $t_i\in\Tcal$ and $f_i\in\Fcal$, representing the zero element of
	$\Acal^3(t_1,f_0)$.
\end{thm}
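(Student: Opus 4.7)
The plan is to prove both directions by exploiting the interplay between the standard $t$-structure of $\D^b(\Acal)$ and its HRS-tilt via the realisation functor $\mathrm{Real}\colon\D^b(\Hcal_\tau)\to\D^b(\Acal)$. By the general theory of realisation functors (see Beilinson's lemma and its variants in \cite{bbd-81}), $\mathrm{Real}$ is an equivalence precisely when every object of $\Acal$ lies in its essential image together with the natural maps $\Ext^n_{\Hcal_\tau}(X,Y)\to\D^b(\Acal)(X,Y[n])$ being isomorphisms for all $X,Y\in\Hcal_\tau$ and $n\geq 0$. Since $\Acal$ generates $\D^b(\Acal)$ as a triangulated category, I expect the essential surjectivity criterion applied to $a\in\Acal$ to be the crux.

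For necessity, I would start from the standard torsion decomposition $0\to t\to a\to f\to 0$ of $a\in\Acal$; viewed in $\D^b(\Acal)$, this is the HRS-truncation triangle of $a$, showing that the HRS-cohomology of $a$ is concentrated in degrees $0$ and $1$, with values $f\in\Fcal\subseteq\Hcal_\tau$ and $t[-1]\in\Tcal[-1]\subseteq\Hcal_\tau$. Assuming $\mathrm{Real}$ is an equivalence, $a$ can be reconstructed from these two cohomology objects. Each cohomology object is itself a canonical extension in $\Hcal_\tau$ of a torsion-free object by a shifted torsion object; splicing these canonical two-step filtrations with the connecting morphism between the two HRS-cohomology degrees should yield a 5-term exact sequence in $\Acal$ of the required form, and the reconstruction property should translate into the vanishing of its Yoneda class in $\Acal^3(t_1,f_0)$.

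For sufficiency, I would use the given 5-term sequence to build an explicit object of $\D^b(\Hcal_\tau)$ whose realisation is quasi-isomorphic to $a$: regard the injection $f_0\hookrightarrow f_1$ as a morphism in $\Hcal_\tau$ (via $\Fcal\hookrightarrow\Hcal_\tau$), and the surjection $t_0\twoheadrightarrow t_1$ as a morphism in $\Hcal_\tau[1]$ (via $\Tcal[-1]\hookrightarrow\Hcal_\tau$), interlaced by the connecting morphisms of the sequence. The vanishing of the Yoneda class in $\Acal^3(t_1,f_0)$ is precisely the obstruction that must vanish for these data to assemble into a genuine object of $\D^b(\Hcal_\tau)$; one then verifies that its realisation is quasi-isomorphic to $a$, placing $\Acal$ in the essential image of $\mathrm{Real}$. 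The matching of Ext-groups required for fully-faithfulness should follow by the same kind of analysis applied to objects in $\Hcal_\tau$, which are themselves two-term extensions in $\Tcal$ and $\Fcal$.

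The main obstacle will be the precise bookkeeping of the obstruction class: proving that the Yoneda triple product $\partial_3\cdot\partial_2\cdot\partial_1\in\Acal^3(t_1,f_0)$ arising from the three short exact sequences into which the 5-term decomposes is exactly the obstruction to gluing the candidate data into an object of $\D^b(\Hcal_\tau)$ whose realisation is $a$. The degree $3$ arising here should correspond to the two HRS-cohomology degrees of $a$ plus the connecting triangle between them, each contributing one $\partial$; making this correspondence rigorous, rather than heuristic, seems to be where the real work lies.
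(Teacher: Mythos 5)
First, a point of comparison: the paper does not prove Theorem~\ref{thm:chz} at all --- it is quoted from Chen--Han--Zhou --- so there is no in-paper argument to measure your proposal against; it can only be judged on its own terms, where it is an outline in the right spirit (realisation functor, comparison of Ext-groups, obstruction classes) but with genuine gaps. The most serious one is in the sufficiency direction. The realisation functor $\D^b(\Acal_\tau)\to\D^b(\Acal)$ is an equivalence if and only if the comparison maps $\Ext^n_{\Acal_\tau}(X,Y)\to\Hom_{\D^b(\Acal)}(X,Y[n])$ are isomorphisms for all $X,Y\in\Acal_\tau$ and all $n$; essential surjectivity is then automatic, because the tilted heart generates $\D^b(\Acal)$. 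Your argument runs the other way: from the five-term sequences you only propose to put each $a\in\Acal$ into the essential image. That does not give an equivalence --- the essential image of a triangle functor not known to be full need not even be closed under cones, so one cannot conclude essential surjectivity from $\Acal$ lying in the image, and full faithfulness is not addressed at all. Your closing remark that ``the matching of Ext-groups required for fully-faithfulness should follow by the same kind of analysis'' is exactly the content of the theorem: one must show that the degree-$3$ vanishing for objects of $\Acal$ effaces the comparison classes in every degree $n\geq 2$ for pairs of objects of $\Acal_\tau$, by a d\'evissage along the torsion pair $(\Fcal,\Tcal[-1])$ of the tilted heart; none of that inductive bookkeeping, which you yourself defer as ``where the real work lies'', appears in the sketch.

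The necessity direction also has a step that would fail as stated. The HRS-cohomology objects of $a$ are already $f\in\Fcal$ and $t[-1]\in\Tcal[-1]$, which are respectively torsion and torsion-free for the pair $(\Fcal,\Tcal[-1])$ in $\Acal_\tau$; their ``canonical two-step filtrations'' are trivial, so splicing them with the connecting morphism only reproduces the torsion triangle $t\to a\to f\to t[1]$, whose terms sit on the wrong sides to give a sequence $0\to f_0\to f_1\to a\to t_0\to t_1\to 0$. The middle terms $f_1$ and $t_0$ must come from elsewhere: assuming the equivalence, the comparison map $\Ext^2_{\Acal_\tau}(f,t[-1])\to\Hom_{\D^b(\Acal)}(f,t[1])=\Acal^1(f,t)$ is bijective, so the class of the torsion sequence of $a$ is represented by a Yoneda $2$-extension $0\to t[-1]\to Y_1\to Y_0\to f\to 0$ in $\Acal_\tau$, and it is by decomposing $Y_0$ and $Y_1$ along $(\Fcal,\Tcal[-1])$ and translating back to $\Acal$ that one obtains the required five-term sequence together with the vanishing of its class in $\Acal^3(t_1,f_0)$. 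Without explicitly invoking this surjectivity of the degree-$2$ comparison map, your construction of the sequence does not get off the ground.
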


\section{General applications of the CHZ criterion}
\label{sec:general-applications}

\subsection{First application: injective-splitting and projective-splitting
torsion pairs}
\label{subsec:stable}

From the criterion of Theorem~\ref{thm:chz}, we deduce the following
consequence, which is essentially a rephrasing of
\cite[Cor.~4.1]{chen-han-zhou-19}. Recall that we say that a class
$\Ccal\subseteq\Acal$ is generating in $\Acal$ if $\Acal=\gen\Ccal$, and cogenerating
in $\Acal$ if $\Acal=\cogen\Ccal$.

\begin{cor}\label{cor:F*T}
	If $\tau=(\Tcal,\Fcal)$ is such that $\Fcal\ast\Tcal$ is generating or
	cogenerating in $\Acal$, then $\tau$ induces derived equivalence.
\end{cor}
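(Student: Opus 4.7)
The plan is to verify the CHZ criterion (Theorem~\ref{thm:chz}) directly. The key observation is that the element of $\Acal^3(t_1, f_0)$ attached to a five-term exact sequence vanishes tautologically if either $t_1 = 0$ or $f_0 = 0$, so it is enough to produce a four-term exact sequence of the appropriate shape. Since the statement is self-dual under passing to $\Acal^{\op}$ (which exchanges torsion with torsion-free, and generating with cogenerating), I would explicitly treat only the case in which $\Fcal \ast \Tcal$ is generating, and leave the cogenerating case to the dual.

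Fix $a \in \Acal$. Using the hypothesis, I would pick an epimorphism $\pi \colon m \twoheadrightarrow a$ with $m \in \Fcal \ast \Tcal$, chosen together with a short exact sequence $0 \to f \to m \to t \to 0$ with $f \in \Fcal$ and $t \in \Tcal$. Setting $K = \ker \pi$ and letting $K'$ denote the image of $K$ under $m \twoheadrightarrow t$, a straightforward diagram chase (equivalently, the snake lemma applied to the three columns $K\cap f \hookrightarrow f$, $K \hookrightarrow m$ and $K' \hookrightarrow t$) produces an exact sequence $0 \to f/(K\cap f) \to a \to t/K' \to 0$. Splicing this with the tautological sequence $0 \to K\cap f \to f \to f/(K\cap f) \to 0$ yields
\[0 \to K\cap f \to f \to a \to t/K' \to 0.\]
Taking $f_0 = K\cap f$, $f_1 = f$, $t_0 = t/K'$ and $t_1 = 0$, we have $f_0 \in \Fcal$ because torsion-free classes are closed under subobjects, $t_0 \in \Tcal$ because torsion classes are closed under quotients, and the obstruction in $\Acal^3(t_1, f_0) = \Acal^3(0, f_0) = 0$ is trivially zero. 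The CHZ criterion is therefore satisfied.

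There is no real obstacle here: the content of the argument is recognising that the generating (resp.\ cogenerating) hypothesis produces, through a single pullback diagram on the fibre of $\pi$ (resp.\ pushout on the cofibre of a chosen monomorphism $a \hookrightarrow m$), exactly the four-term exact sequence asked for by Theorem~\ref{thm:chz}, and that by arranging $t_1 = 0$ (resp.\ $f_0 = 0$) one bypasses the $\Acal^3$-vanishing condition entirely. The remaining verifications are all routine.
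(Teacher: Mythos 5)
Your argument is correct and is essentially the paper's proof up to duality: the paper treats the cogenerating case by pulling back the witnessing sequence along $a\hookrightarrow e$ (arranging $f_0=0$), while you treat the generating case by pushing it forward along $m\twoheadrightarrow a$ (arranging $t_1=0$), each side invoking duality for the other. The only cosmetic omission is the remark that $\Fcal\ast\Tcal$ is closed under finite direct sums, needed to get a single epimorphism $m\twoheadrightarrow a$ from the generating hypothesis.
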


\begin{proof}
	For completeness, we prove the cogenerating case, the other being dual. For any $a\in\Acal$, by
	assumption we have a monomorphism $a\hookrightarrow e$ for
	$e\in\Fcal\ast\Tcal$ (as this class is closed under finite direct sums). This yields a pullback diagram:
	\[\begin{tikzcd}
		0 \arrow{r} & f_1 \arrow[hook]{d} \arrow{r}
			\arrow[phantom,description,very near start, "\lrcorner"]{dr} &
			a \arrow[hook]{d} \arrow{r} & t_0 \arrow[equal]{d} \\
		0 \arrow{r} & f \arrow{r} & e \arrow{r} & t \arrow{r} & 0,
	\end{tikzcd}\]
	where the bottom row, with $f\in\Fcal$ and $t\in\Tcal$, witnesses the
	assumption that $e$ lies in $\Fcal\ast\Tcal$. Since $\Fcal$ is closed under
	subobjects, we also
	have $f_1\in\Fcal$. Completing the top row with the cokernel $t_1$, which
	is a quotient of $t_0:=t\in\Tcal$, yields a
	sequence as in Theorem~\ref{thm:chz}, with $f_0=0$.
\end{proof}

In this subsection we give a new application of Corollary~\ref{cor:F*T}.
Let $\Acal$ be an abelian category, and $\Ccal\subseteq\Acal$ a class of
objects. Say that a torsion pair $(\Tcal,\Fcal)$ \emph{splits
$\Ccal$} (or is \emph{$\Ccal$-splitting}) if the torsion sequence of every
object $c\in\Ccal$ splits.
This terminology is modelled on the definition of \emph{brick-splitting} torsion pairs
recently introduced in \cite{asai-iyam-mous-paqu-25} (which is what one obtains
by taking $\Ccal$ to be the class of bricks). We have the following.

\begin{prop}\label{prop:cogen-splitting}
	Let $\Acal$ be an abelian category. If a torsion pair splits a
	generating or cogenerating class $\Ccal\subseteq\Acal$, then it induces
	derived equivalence.
\end{prop}

\begin{proof}
	If $(\Tcal,\Fcal)$ splits a generating or cogenerating class
	$\Ccal\subseteq\Acal$, since we have
	$\Ccal\subseteq\Fcal\oplus\Tcal\subseteq\Fcal\ast\Tcal$, we conclude by
	Corollary~\ref{cor:F*T}.
\end{proof}

The first example of this situation we consider is when we take
$\Ccal:=\Inj\Acal$ to be the class of injectives and assume that it is
cogenerating, that is, that $\Acal$ has enough injectives. We observe
immediately that if in addition $\Acal$ has injective envelopes, this recovers
the classical notion of stable torsion pair (see \cite[\S{}VI.7]{sten-75}), as
follows.

\begin{lemma}
	Let $\Acal$ be a category with injective envelopes. Then a torsion pair
	$(\Tcal,\Fcal)$ is injective-splitting if and only if it is \emph{stable},
	that is, $\Tcal$ is closed under injective envelopes.
\end{lemma}

\begin{proof}
	$(\Rightarrow)$ For $t_0\in\Tcal$, consider the injective envelope $e(t_0)$ and its
	torsion sequence $0\to t\to e(t_0)\to f\to 0$, which splits by assumption.
	Since $t_0$ belongs to $\Tcal$, it is contained in the torsion part $t$ of
	$e(t_0)$, which is therefore an essential direct summand. Thus we have have
	$e(t_0)=t\in\Tcal$.

	$(\Leftarrow)$ Assume that $\Tcal$ is closed under injective envelopes, and
	consider the torsion sequence $0\to t\to e\to f\to 0$ of an injective
	$e\in\Gcal$. Then $e$ contains as a direct summand the injective envelope
	$e(t)$ of $t$. Since $e(t)$ belongs to $\Tcal$ by assumption, however, it must
	be contained in the torsion part $t$ of $e$: therefore we have that $t=e(t)$
	is injective, and the torsion sequence splits.
\end{proof}

\begin{ex}\label{ex:stable}
	We give a few examples of injective-splitting (in fact, stable) torsion pairs,
	both hereditary and not. By Proposition~\ref{prop:cogen-splitting}, they all
	induced derived equivalence.

	(1) Let $\Gcal$ be a locally noetherian Grothendieck category of global
	dimension one. Then the class $\Inj\Gcal$ of injectives is closed under
	extensions (which are split), coproducts (by local noetherianity) and
	quotients
	(by global dimension), and therefore it is the torsion class of a torsion pair
	$(\Inj\Gcal,\Inj\Gcal^\bot)$. For example, for $\Gcal=\Mod(\Zbb)$ this
	construction yields the divisible-reduced torsion pair. This pair is obviously
	stable, but not hereditary if $\Gcal$ is not semisimple.

	More generally, if $\Gcal$ is a Grothendieck category, every torsion pair
	whose torsion class contains $\Inj\Gcal$ (which we could dub
	\emph{cofaithful}) is obviously stable.

	(2) Consider the torsion pair of $\mod(k\Dbb_4)$ generated by the
	indecomposable representations:
	\[\begin{tikzcd}[column sep=small, row sep=0] && k \\ 0 \ar{r} & k
	\ar{ur}\ar{dr} \\ && 0\end{tikzcd}\qquad\text{and}\qquad
	\begin{tikzcd}[column sep=small, row sep=0] && 0\phantom{.} \\ k \ar{r} & 0
	\ar{ur}\ar{dr} \\ && 0.\end{tikzcd}
	\]
	Drawing the Auslander--Reiten quiver of $k\Dbb_4$ with the dimension vectors
	of each indecomposable representation, and marking torsion modules with a
	shade and torsion-free modules with a frame, this pair looks as
	follows.
	\newcommand{\tframe}[1]{\colorbox{lightgray}{$#1$}}
	\newcommand{\fframe}[1]{\fbox{$#1$}}
	\[\begin{tikzcd}[sep=small]
		& & \fframe{\dfour1111} \ar{ddr} && \tframe{\dfour0100} \ar{ddr} &&
			\tframe{\dfour1000} \\
		\fframe{\dfour0010} \ar{dr} && \fframe{\dfour0101} \ar{dr} &&
			\tframe{\dfour1110} \ar{dr} \\
		& \fframe{\dfour0111} \ar{ur}\ar{dr}\ar{uur} && \dfour1211 \ar{ur}\ar{uur} \ar{dr} &&
			\tframe{\dfour1100} \ar{uur} \\
		\fframe{\dfour0001} \ar{ur} && \tframe{\dfour0110} \ar{ur} &&
			\fframe{\dfour1101} \ar{ur}
	\end{tikzcd}\]
	From this, it is easy to see that this torsion pair is not hereditary, it is
	not split (that is, not all torsion sequences split), but it is stable.

	(3) Over a commutative noetherian ring $R$, every hereditary torsion pair is
	stable. More generally, this is true over any right noetherian ring $A$ that
	is \emph{fully bounded} and \emph{right Artin--Rees}, whose definitions we know
	recall.
	We say that a ring $A$ is \emph{fully bounded} if every indecomposable injective right
	$A$-module is the injective envelope of some $A/\pf$, for $\pf$ a prime ideal
	\cite[\S VII.2]{sten-75}. For example, any finitely generated algebra over a
	commutative noetherian ring is fully bounded
	\cite[Example~\S{}VII.2.4]{sten-75}. On the other hand, following \cite[\S
	VII.4]{sten-75}, we say that $A$ is \emph{right
	Artin--Rees} (as a right module over itself) if for every right ideal $I\leq
	A$, every two-sided ideal $\mathfrak{a}\leq A$ and every $n\geq 1$ there
	exists $h(n)\geq 1$ such that $\mathfrak{a}^{h(n)}\cap I\subseteq
	I\mathfrak{a}^n$. Commutative noetherian rings are right Artin--Rees by
	\cite[Prop.~VII.4.5]{sten-75}.

	Now, let $A$ be a right noetherian, fully bounded, right Artin--Rees ring. A
	hereditary torsion pair in $\Mod(A)$ is described by a \emph{Gabriel topology}
	\cite{sten-75}. Since $A$ is fully bounded, every Gabriel topology is
	\emph{bounded} by \cite[Thm.~VII.3.4]{sten-75}, and therefore also
	\emph{stable} since $A$ is right Artin--Rees
	\cite[Thm.~VII.4.4(c$\Leftrightarrow$e)]{sten-75}. This means by definition
	\cite[after Prop.~VI.7.1]{sten-75} that every hereditary torsion pair in
	$\Mod(A)$ is stable in our sense.

	Besides commutative noetherian rings, another example of right noetherian,
	fully bounded, right Artin--Rees rings are \emph{Azumaya algebras} (also
	called \emph{central separable algebras}), by
	\cite[Ex.~VII.4.2]{sten-75}.
	These are the rings $A$ which are finitely generated algebras over their noetherian
	center $R$ and which are projective over $A\otimes_R A^{op}$
	\cite{ford-17,ausl-gold-60}.

	(4) Over any ring $A$, the hereditary torsion pair generated by the cyclic
	modules $A/I$, where $I$ is an essential right ideal of $A$, is called the
	\emph{Goldie} torsion pair (see \cite[\S{}VI.6.2]{sten-75}), and it is stable
	\cite[Prop.~VI.7.3]{sten-75}.

	(5) By \cite[Prop.~3.27]{pavo-25a}, in the category $\mathsf{Qcoh}(\Pbb^1)$ of
	quasi-coherent sheaves over the projective line, every indecomposable
	injective is torsion-simple. Since this category
	is locally noetherian, so that every injective is a coproduct of
	indecomposable injectives, this shows that every torsion pair in
	$\mathsf{Qcoh}(\Pbb^1)$ is injective-splitting.
\end{ex}

As mentioned, Proposition~\ref{prop:cogen-splitting} applies to all the entries
of Example~\ref{ex:stable}. In particular, item (3) yields a neat new proof of
\cite[Cor.~5.11]{pavo-vito-21}.

\begin{prop}\label{prop:stable-commutative-noetherian}
	Let $A$ be a right noetherian, fully bounded, right Artin--Rees ring (such as
	a commutative noetherian ring, or more generally an Azumaya algebra). Then
	every hereditary torsion pair of $\Mod(A)$ induces derived equivalence.
\end{prop}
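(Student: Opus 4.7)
The plan is to simply combine the two ingredients already assembled in this subsection: Example~\ref{ex:stable}(3) established that, over a right noetherian, fully bounded, right Artin--Rees ring $A$, every hereditary torsion pair of $\Mod(A)$ is stable in our sense, via the chain of implications: hereditary torsion pair $\leftrightarrow$ Gabriel topology (by general Gabriel correspondence), Gabriel topology is bounded (fully bounded hypothesis, via \cite[Thm.~VII.3.4]{sten-75}), bounded Gabriel topology is stable (right Artin--Rees hypothesis, via \cite[Thm.~VII.4.4]{sten-75}).

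Having this, I would just invoke Proposition~\ref{prop:stable-dereq}, which applies since $\Mod(A)$ is a Grothendieck category and so has injective envelopes. The conclusion is that every such stable torsion pair induces derived equivalence. The only slightly subtle point worth flagging in the write-up is that the definition of ``stable'' used in \cite{sten-75} is a priori formulated only for hereditary torsion pairs, but it coincides with ours (closure of $\Tcal$ under injective envelopes) in that setting, so the citations transfer cleanly.

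I do not expect any obstacle here: this proposition is a corollary whose entire content is the combination of Example~\ref{ex:stable}(3) and Proposition~\ref{prop:stable-dereq}. The write-up can be a single sentence.

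\begin{proof}
	By Example~\ref{ex:stable}(3), every hereditary torsion pair of $\Mod(A)$ is stable, and therefore it induces derived equivalence by Proposition~\ref{prop:stable-dereq}.
\end{proof}
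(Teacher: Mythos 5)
Your proof is correct and matches the paper's argument exactly: the paper also simply cites Example~\ref{ex:stable}(3) for stability of every hereditary torsion pair over such a ring and then concludes via Proposition~\ref{prop:stable-dereq}. Your extra remark about the compatibility of the notion of ``stable'' with \cite{sten-75} is already handled in the example itself, so the one-sentence write-up is exactly right.
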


\begin{proof}
	As explained in Example~\ref{ex:stable}(3), every hereditary torsion pair in
	$\Mod(A)$ is stable, and the result follows from
	Proposition~\ref{prop:cogen-splitting}.
\end{proof}

We also point out the application to item (5) of Example~\ref{ex:stable}.

\begin{prop}\label{prop:sheaves}
	Every torsion pair in $\mathsf{Qcoh}(\Pbb^1)$ induces derived equivalence.
\end{prop}

\begin{proof}
	As argued in the Example, every torsion pair in $\mathsf{Qcoh}(\Pbb^1)$ is
	injective-splitting.
\end{proof}

We now turn to projective-splitting torsion pairs.
Recall that a commutative ring $R$ is \emph{regular} if we have $\gldim(\Mod
R_\pf)<\infty$ for every (maximal) prime $\pf$ of $R$. This includes all
commutative rings $R$ with $\gldim(\Mod R)<\infty$.
For noetherian regular rings, we can considerably strengthen
Proposition~\ref{prop:stable-commutative-noetherian}, by applying
Proposition~\ref{prop:cogen-splitting} with $\Ccal:=\Proj R$.

\begin{prop}\label{prop:regular}
	Let $R$ be a regular commutative noetherian ring. Then every torsion pair is
	projective-splitting, and in particular it induces derived equivalence.
\end{prop}

\begin{proof}
%	If $R$ is regular, then its localisations at prime ideals are also regular
%	\cite{}, and therefore they are reduced domains by \cite{}. This shows that
%	below any prime ideal of $R$ there is a unique minimal prime. Equivalently,
%	any two minimal primes are comaximal, that is, their sum is the whole $R$. On
%	the other hand, the intersection of the minimal ideals of $R$, which is the
%	nilradical, must be zero because a regular ring is reduced \cite{}. The
%	Chinese Remainder Theorem then shows that $R$ is isomorphic to the finite
%	product of integral domains $R/\pf_1\times\cdots\times R/\pf_n$, for
%	$\pf_1,\dots,\pf_n$ the minimal primes.
	By \cite[Cor.~2.2.20]{brun-herz-93}, every regular commutative ring is a
	product of domains, and then necessarily finitely many if it is noetherian,
	$R\simeq D_1\times \cdots \times D_n$ (in fact, one can take $D_i:=R/\pf_i$
	where $\pf_i$ are the minimal primes of $R$).
	We then have a decomposition $\Mod(R)\simeq \Mod(D_1)\times \cdots\times
	\Mod(D_n)$. As a consequence, on one hand we have that $\Proj(R)
	=\Proj(D_1)\times\cdots\times \Proj(D_n)$, while on the other any
	torsion pair of $\Mod(R)$ has the form
	$(\Tcal_1\times\cdots\times\Tcal_n,\Fcal_1\times\cdots\times\Fcal_n)$, for
	torsion pairs $(\Tcal_i,\Fcal_i)$ of $\Mod(D_i)$. To prove that every
	torsion pair in $\Mod(R)$ is projective-splitting, it is then sufficient to
	prove the same claim in $\Mod(D_i)$.

	Since $D_i$ is a domain, the zero ideal is prime in $D_i$, and therefore the regular module $D_i\simeq D_i/0$ is
	torsion-simple in $\Mod(D_i)$, by \cite[Corollary~3.11]{pavo-25a}.
	It follows that all the projective $D_i$-modules are also torsion-simple by
	\cite[Prop.~2.5]{pavo-25a}, since $\Proj D_i=\Add D_i$. \emph{A fortiori},
	this means that every torsion pair splits projectives, concluding the proof.
\end{proof}

\subsection{Second application: global dimension $2$}
\label{subsec:gldim2}

Now we consider the case of an abelian category $\Acal$ with
global dimension $\gldim\Acal\leq 2$. Under this assumption, the criterion of
Theorem~\ref{thm:chz} simplifies considerably, as the requirement that the
sequence vanishes in $\Acal^3(t_1,f_0)$ is automatically satisfied.
We then have the following.

\begin{cor}
	Assume that $\gldim\Acal\leq 2$. Then a torsion pair $(\Tcal,\Fcal)$ in
	$\Acal$ induces derived equivalence if and only if $\Acal=\gen\Fcal\ast\cogen\Tcal$.
\end{cor}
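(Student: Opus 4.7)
The plan is to apply the CHZ criterion (Theorem~\ref{thm:chz}) directly: the key observation that unlocks the statement is that under $\gldim\Acal\leq 2$ we have $\Acal^3(-,-)=0$ identically, so the condition that the 6-term exact sequence represents the zero element of $\Acal^3(t_1,f_0)$ is automatic. Thus $\tau$ induces derived equivalence if and only if every $a\in\Acal$ admits \emph{some} 6-term exact sequence
\[0\to f_0\to f_1\to a\to t_0\to t_1\to 0\]
with $f_i\in\Fcal$ and $t_i\in\Tcal$. It remains to show that existence of such a sequence for every $a$ is equivalent to the condition $\Acal=\gen\Fcal\ast\cogen\Tcal$.

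For the ($\Leftarrow$) direction, take $a\in\Acal$ and a short exact sequence $0\to x\to a\to y\to 0$ with $x\in\gen\Fcal$, $y\in\cogen\Tcal$. Since $\Fcal$ is closed under finite direct sums, $x$ is a quotient of some $f_1\in\Fcal$; the kernel $f_0$ of $f_1\twoheadrightarrow x$ lies in $\Fcal$ because torsion-free classes are closed under subobjects. Dually, $y$ embeds into some $t_0\in\Tcal$ with cokernel $t_1\in\Tcal$. Splicing the three sequences $0\to f_0\to f_1\to x\to 0$, $0\to x\to a\to y\to 0$ and $0\to y\to t_0\to t_1\to 0$ produces the desired 6-term exact sequence.

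For the ($\Rightarrow$) direction, given the 6-term sequence provided by CHZ, set $x:=\im(f_1\to a)$ and $y:=\im(a\to t_0)$. Then $x$ is a quotient of $f_1\in\Fcal$, hence lies in $\gen\Fcal$, while $y$ is a subobject of $t_0\in\Tcal$, hence lies in $\cogen\Tcal$. Exactness at $a$ identifies $x=\ker(a\to t_0)$ and $y=a/x$, so $0\to x\to a\to y\to 0$ witnesses $a\in\gen\Fcal\ast\cogen\Tcal$.

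There is essentially no obstacle in this proof; the only subtlety worth flagging is the use of the closure properties of $\Tcal$ and $\Fcal$ (under quotients, subobjects, and finite direct sums) to ensure that the splicing and truncation stay within the prescribed classes. The entire content of the corollary is the vanishing of $\Acal^3$ in global dimension at most two, which reduces the CHZ criterion to a purely combinatorial statement about the existence of a 6-term sequence.
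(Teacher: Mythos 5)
Your proof is correct and follows essentially the same route as the paper: the vanishing of $\Acal^3(t_1,f_0)$ under $\gldim\Acal\leq 2$ makes the CHZ condition purely about existence of the 6-term sequence, and the equivalence with $\Acal=\gen\Fcal\ast\cogen\Tcal$ is exactly the splicing/truncation argument the paper sketches more tersely. No gaps; your explicit treatment of both directions simply fills in details the paper leaves to the reader.
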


\begin{proof}
	An object $a\in\Acal$ lies in $\gen\Fcal\ast\cogen\Tcal$ precisely if the exists
	an exact sequence $f_1\to a\to t_0$ with $f_1\in\Fcal$ and $t_0\in\Tcal$,
	which then can be completed to a sequence as in Theorem~\ref{thm:chz}.
\end{proof}

Observe that this span has the following closure properties.

\begin{lemma}
	Let $(\Tcal,\Fcal)$ be a torsion pair in an abelian category. Then
	the subcategories $\gen\Fcal$, $\cogen\Tcal$ and $\gen\Fcal\ast\cogen\Tcal$
	are closed under subobjects, quotients and finite direct sums.
\end{lemma}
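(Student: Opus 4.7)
The plan is to reduce everything to the two individual subcategories $\gen\Fcal$ and $\cogen\Tcal$, and then argue for their span via the standard pullback/pushout manipulations along extensions.

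First, I would prove the statement for $\gen\Fcal$. Since $\Fcal$ is a torsion-free class, it is closed under subobjects and under finite direct sums, so every finite direct sum of objects of $\Fcal$ is itself in $\Fcal$; consequently $\gen\Fcal$ is just the class of quotients of objects of $\Fcal$. Closure under quotients and under finite direct sums is then immediate: compose two epimorphisms for the first, and sum two presentations for the second. The only nontrivial point is closure under subobjects. Given $F\twoheadrightarrow y$ with $F\in\Fcal$ and a monomorphism $x\hookrightarrow y$, I would form the pullback to obtain a subobject $F'\hookrightarrow F$ that still surjects onto $x$. Since $\Fcal$ is closed under subobjects, $F'\in\Fcal$, so $x\in\gen\Fcal$.

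The statement for $\cogen\Tcal$ is obtained by the dual argument, using that $\Tcal$ is closed under quotients and finite direct sums. The only slightly nonformal point is closure under quotients: given $y\hookrightarrow T\in\Tcal$ and a surjection $y\twoheadrightarrow z$ with kernel $K\subseteq y\subseteq T$, the object $T/K$ is a quotient of $T\in\Tcal$, hence in $\Tcal$, and it contains $z=y/K$ as a subobject.

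For $\gen\Fcal\ast\cogen\Tcal$, fix $a$ fitting into a short exact sequence $0\to x\to a\to y\to 0$ with $x\in\gen\Fcal$ and $y\in\cogen\Tcal$. For a subobject $a'\hookrightarrow a$, set $x':=x\cap a'$ and observe that $a'/x'$ embeds into $a/x=y$; then $x'\in\gen\Fcal$ and $a'/x'\in\cogen\Tcal$ by the closure properties already established, so $a'$ lies in the span. Dually, for a quotient $a\twoheadrightarrow a''$ with kernel $K$, let $x''$ be the image of $x$ in $a''$: then $x''$ is a quotient of $x\in\gen\Fcal$, and $a''/x''\cong a/(x+K)$ is a quotient of $a/x=y\in\cogen\Tcal$, so $a''\in\gen\Fcal\ast\cogen\Tcal$. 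Finite direct sums of two such extensions yield an extension of the same form by taking componentwise direct sums.

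There is no real obstacle here; the only step that uses more than formal nonsense is the subobject case of $\gen\Fcal$, where the torsion-free class axiom that $\Fcal$ is closed under subobjects is essential. Everything else follows from manipulating pullback and pushout squares, so I would keep the proof short and direct.
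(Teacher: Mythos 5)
Your proposal is correct and follows essentially the same route as the paper: reduce $\gen\Fcal$ to quotients of objects of $\Fcal$ and handle subobjects by a pullback using that $\Fcal$ is closed under subobjects, argue dually for $\cogen\Tcal$, and treat the span $\gen\Fcal\ast\cogen\Tcal$ by intersecting/projecting the given extension, which is just the concrete form of the paper's pullback/pushout argument. No gaps.
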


\begin{proof}
	$\gen\Fcal$ is closed under quotients and direct sums by construction, and it
	is closed under subobject by an easy pullback argument, using that $\Fcal$ is
	closed under subobjects. The argument for $\cogen\Tcal$ is dual. If
	$\Xcal$ and $\Ycal$ are subcategories closed under subobjects,
	their span $\Xcal\ast\Ycal$ is also closed under subobjects by another
	pullback argument. So $\gen\Fcal\ast\cogen\Tcal$ is closed under subobjects,
	and similary under quotients. Closure under direct sums is clear.
\end{proof}

In particular, if $M$ is a (co)generator of $\Acal$, meaning that $\Acal=\gen M$
or $\Acal=\cogen M$, then $\gen\Fcal\ast\cogen\Tcal$ equals $\Acal$ if and only
if it contains $M$.

For an object $a\in\Acal$ and a subcategory $\Ccal\subseteq\Acal$, define the
\emph{trace} and the \emph{reject} of $\Ccal$ in $a$ as:
\begin{align*}
	\tr_\Ccal a&:= \text{the largest subobject } x\leq a \text{ with } x\in\gen\Ccal,\\
	\rej_\Ccal a&:= \text{the smallest subobject } x\leq a \text{ with } a/x\in\cogen\Ccal.
\end{align*}
The trace (respectively, the reject) is well-defined as soon as $a$ is
noetherian (respectively, artinian).
For an object $c$, we write
$\tr_c:=\tr_{\gen c}$ and $\rej_c:=\rej_{\cogen c}$.

\begin{lemma}
	Let $\Acal$ be an abelian category and $(\Tcal,\Fcal)$ a torsion pair. For a
	noetherian (respectively, artinian) object $a\in\Acal$, we have that
	$a\in\gen\Fcal\ast\cogen\Tcal$ if and only if $a/\tr_\Fcal a\in\cogen\Tcal$
	(respectively, $\rej_\Tcal a\in\gen\Fcal$).
\end{lemma}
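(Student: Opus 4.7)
The plan is to prove the equivalence by unwinding definitions, leaning entirely on the closure properties established in the preceding lemma. I will focus on the noetherian statement; the artinian version follows by the dual argument, replacing subobjects with quotients, traces with rejects, and $\cogen\Tcal$ with $\gen\Fcal$.

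The easy direction is immediate: if $a/\tr_\Fcal a \in \cogen\Tcal$, then the short exact sequence
\[ 0 \to \tr_\Fcal a \to a \to a/\tr_\Fcal a \to 0 \]
witnesses $a \in \gen\Fcal \ast \cogen\Tcal$, since $\tr_\Fcal a \in \gen\Fcal$ by the very definition of the trace (which exists because $a$ is noetherian).

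For the converse direction, I will suppose $a \in \gen\Fcal \ast \cogen\Tcal$ and pick a witnessing short exact sequence $0 \to x \to a \to y \to 0$ with $x \in \gen\Fcal$ and $y \in \cogen\Tcal$. Since $x$ is then a subobject of $a$ lying in $\gen\Fcal$, the maximality property of the trace gives $x \leq \tr_\Fcal a$. This produces a canonical epimorphism $y \cong a/x \twoheadrightarrow a/\tr_\Fcal a$, and invoking the closure of $\cogen\Tcal$ under quotients from the preceding lemma yields $a/\tr_\Fcal a \in \cogen\Tcal$, as required.

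There is no real obstacle here; the only subtlety is that the trace is being used both as an existing subobject (to build the sequence in the easy direction) and as the maximum of subobjects in $\gen\Fcal$ (to absorb $x$ in the hard direction). Both uses are covered by the noetherianity hypothesis, which is precisely what makes $\tr_\Fcal a$ well-defined and actually the largest element of the poset of subobjects of $a$ belonging to $\gen\Fcal$. The closure of $\cogen\Tcal$ under quotients, which powers the nontrivial direction, has already been proved in the lemma just above, so nothing further needs to be established.
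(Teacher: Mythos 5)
Your proof is correct and follows essentially the same route as the paper: the trivial direction via the short exact sequence $0\to \tr_\Fcal a\to a\to a/\tr_\Fcal a\to 0$, and the converse by absorbing $x$ into $\tr_\Fcal a$, passing to the induced epimorphism $y\twoheadrightarrow a/\tr_\Fcal a$, and using closure of $\cogen\Tcal$ under quotients from the preceding lemma. Nothing to add.
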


\begin{proof}
	We prove the claim for $a$ noetherian, as the other is dual. The implication
	$(\Leftarrow)$ is clear, given the short exact sequence $0\to \tr_\Fcal a\to
	a\to a/\tr_\Fcal a\to 0$. For the converse ($\Rightarrow$), let $0\to x\to a\to y\to 0$ be a
	short exact sequence with $x\in\gen\Fcal$ and $y\in\cogen\Tcal$. Then by
	construction we have $x\subseteq\tr_\Fcal a$, and therefore we have an
	epimorphism $y\twoheadrightarrow a/\tr_\Fcal a$. The latter then belongs to
	$\cogen\Tcal$, as this class is closed under quotients.
\end{proof}

Let $M$ be a generator (respectively, a cogenerator) of $\Acal$, and
let $0\to T\to M\to F\to 0$ be its torsion sequence with respect to
$(\Tcal,\Fcal)$. Then we have that $\gen\Fcal=\gen F$ (respectively,
$\cogen\Tcal=\cogen T$), as any epimorphism $M^n\to f$ for $f\in\Fcal$ factors
through the torsion-free part $F^n$ of $M^n$. Combining all the observations
made so far, we obtain the following. We remark that if $\Acal$ has a noetherian
generator (respectively, an artinian cogenerator), it is a noetherian
(respectively, artinian) category.

\begin{prop}\label{prop:gldim2}
	Let $\Acal$ be an abelian category with $\gldim\Acal\leq 2$, $(\Tcal,\Fcal)$ a
	torsion pair in $\Acal$ and $0\to T\to M\to F\to 0$ the torsion sequence of an
	object $M$. Then:
	\begin{enumerate}
		\item if $M$ is a noetherian generator, then $(\Tcal,\Fcal)\in\tors^d\Acal$
			if and only if $M/\tr_FM\in\cogen\Tcal$;
		\item if $M$ is an artinian cogenerator, then $(\Tcal,\Fcal)\in\tors^d\Acal$
			if and only if $\rej_TM\in\gen\Fcal$.
	\end{enumerate}
\end{prop}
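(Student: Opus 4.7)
The proposition is the culmination of all the machinery built up in the preceding paragraphs of \S\ref{subsec:gldim2}, so my plan is essentially to chain together the lemmas already proved. I will treat part (1) in detail; part (2) follows by dualising every step.

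\textbf{Step 1.} By the corollary following Theorem~\ref{thm:chz} (the $\gldim\Acal\leq 2$ simplification), $(\Tcal,\Fcal)$ belongs to $\tors^d\Acal$ exactly when $\Acal=\gen\Fcal\ast\cogen\Tcal$. By the closure lemma, this span is closed under subobjects, quotients and finite direct sums; combining with the assumption that $M$ is a generator (so $\Acal=\gen M$), I would argue (as already noted in the paragraph right after that lemma) that $\Acal=\gen\Fcal\ast\cogen\Tcal$ holds if and only if $M\in\gen\Fcal\ast\cogen\Tcal$: the non-trivial implication uses that $\gen\Fcal\ast\cogen\Tcal$ then contains every finite direct sum of copies of $M$ and is closed under quotients, and $\gen M=\Acal$.

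\textbf{Step 2.} With $M$ noetherian, I would apply the second lemma (the trace/reject characterisation) to reduce $M\in\gen\Fcal\ast\cogen\Tcal$ to the condition $M/\tr_\Fcal M\in\cogen\Tcal$.

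\textbf{Step 3.} Finally I would replace $\tr_\Fcal M$ by $\tr_F M$. Since $M$ is a generator, every epimorphism $M^n\twoheadrightarrow f$ with $f\in\Fcal$ factors through the torsion-free quotient $F^n$ (using that $\Acal(\Tcal,\Fcal)=0$), so $\gen\Fcal=\gen F$. Both traces are by definition the largest subobject of $M$ lying in $\gen\Fcal=\gen F$, hence they coincide. This yields the stated criterion $M/\tr_F M\in\cogen\Tcal$.

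\textbf{Step 4.} Part (2) is completely dual: artinianity of $M$ lets me use the reject half of the trace/reject lemma, while $M$ being a cogenerator forces $\cogen\Tcal=\cogen T$ and hence $\rej_\Tcal M=\rej_T M$. No step presents a genuine obstacle, since every piece has been proved above; the only thing to check carefully is the identification of the traces/rejects with respect to $\Fcal$ versus $F$ (respectively $\Tcal$ versus $T$), which hinges on the generator/cogenerator hypothesis and the orthogonality built into the torsion pair.
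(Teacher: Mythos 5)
Your proof is correct and follows essentially the same route as the paper: reduce via the $\gldim\leq 2$ corollary to $\Acal=\gen\Fcal\ast\cogen\Tcal$, use the closure properties of the span and the (co)generator hypothesis to test only on $M$, apply the trace/reject lemma, and identify $\tr_\Fcal M$ with $\tr_F M$ (resp.\ $\rej_\Tcal M$ with $\rej_T M$) via $\gen\Fcal=\gen F$ (resp.\ $\cogen\Tcal=\cogen T$). Nothing to add.
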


\begin{proof}
	We summarise the argument for (1), as (2) is dual. Since $\gldim\Acal\leq 2$, the
	pair $(\Tcal,\Fcal)$ gives derived equivalence if and only if
	$\gen\Fcal\ast\cogen\Tcal$ equals $\Acal$, and this is the case if and only if
	it contains the generator $M$. This happens if and only if
	$M/\tr_\Fcal M=M/\tr_FM$ lies in $\cogen\Tcal$.
\end{proof}

This has the following consequence, involving the sets $\tors(M/\epsilon)$
introduced in \S\ref{subsec:lattice}.

\begin{cor}\label{cor:upper-lower}
	Let $\Acal$ be an abelian category with $\gldim\Acal\leq2$, and
	$\epsilon\colon 0\to T\to M\to F\to 0$ a exact sequence. Then:
	\begin{enumerate}
		\item if $M$ is a noetherian generator, then
			$\tors^d\Acal\cap\tors(M/\epsilon)$ is an upper-set in
			$\tors(M/\epsilon)$;
		\item if $M$ is an artinian cogenerator, then
			$\tors^d\Acal\cap\tors(M/\epsilon)$ is a lower-set in
			$\tors(M/\epsilon)$.
	\end{enumerate}
\end{cor}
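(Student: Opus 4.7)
The plan is to reduce directly to Proposition~\ref{prop:gldim2}, exploiting that within a fixed partition class $\tors(M/\epsilon)$ the relevant subobject of $M$ is determined by $\epsilon$ alone and hence is the same for every pair in the class.

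For part (1), I would take two pairs $\tau_1=(\Tcal_1,\Fcal_1)\leq\tau_2=(\Tcal_2,\Fcal_2)$ in $\tors(M/\epsilon)$ and note that $\epsilon$ is by definition the torsion sequence of $M$ with respect to both pairs, so the quotient $F$, and therefore the object $M/\tr_F M$, is common to both. By Proposition~\ref{prop:gldim2}(1), membership of $\tau_i$ in $\tors^d\Acal$ is equivalent to $M/\tr_F M\in\cogen\Tcal_i$. The operator $\cogen$ is plainly monotone, since $\Tcal_1\subseteq\Tcal_2$ implies that any subobject of a finite direct sum of objects of $\Tcal_1$ is in particular such for $\Tcal_2$; hence $\cogen\Tcal_1\subseteq\cogen\Tcal_2$. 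Consequently $\tau_1\in\tors^d\Acal$ implies $\tau_2\in\tors^d\Acal$, which is the upper-set property.

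Part (2) is handled dually: here Proposition~\ref{prop:gldim2}(2) reformulates membership in $\tors^d\Acal$ as $\rej_T M\in\gen\Fcal_i$, the object $\rej_T M$ again depending only on $\epsilon$. Since $\tau_1\leq\tau_2$ translates to $\Fcal_2\subseteq\Fcal_1$, and $\gen$ is monotone, $\tau_2\in\tors^d\Acal$ forces $\tau_1\in\tors^d\Acal$, giving the lower-set conclusion.

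There is essentially no substantive obstacle; the content is really in Proposition~\ref{prop:gldim2}, which recasts the CHZ criterion in a form where the dependence on the torsion pair is confined to a single class ($\Tcal$ or $\Fcal$) acting on a fixed object. The only point to verify is that Proposition~\ref{prop:gldim2} does apply to every pair in $\tors(M/\epsilon)$, which is immediate from the hypotheses that $M$ is a noetherian generator (resp.\ artinian cogenerator), carried over from the statement of the corollary.
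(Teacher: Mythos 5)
Your proposal is correct and follows essentially the same route as the paper's proof: both reduce to Proposition~\ref{prop:gldim2} after observing that the objects $M/\tr_F M$ and $\rej_T M$ depend only on $\epsilon$ and are therefore common to every pair in $\tors(M/\epsilon)$, and then conclude from the monotonicity of the conditions $M/\tr_F M\in\cogen\Tcal$ and $\rej_T M\in\gen\Fcal$ in $\Tcal$ and $\Fcal$ respectively. Your write-up merely makes the monotonicity of $\cogen$ and $\gen$ explicit, which the paper leaves implicit.
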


\begin{proof}
	Fix $\varepsilon$ as in the statement. Then, for any torsion pair
	$(\Tcal,\Fcal)\in\tors(M/\epsilon)$, the objects $M/\tr_\Fcal M=M/\tr_F M$ and
	$\rej_\Tcal M=\rej_T M$
	are fixed, as they only depend on $T$ and $F$. From
	Proposition~\ref{prop:gldim2}, we deduce
	that a torsion pair $(\Tcal,\Fcal)\in\tors(M/\epsilon)$ gives derived
	equivalence if and only if $\Tcal$ (respectively, $\Fcal$) is large enough to
	cogenerate $M/\tr_FM$ (respectively, to generate $\rej_TM$). The claim follows
	immediately.
\end{proof}

\begin{cor}\label{cor:derived-partition}
	Let $\Acal$ be a length abelian category with $\gldim\Acal\leq 2$, and let
	$M\in\Acal$ be both a generator and a cogenerator (for example, $\Acal=\mod A$
	for an artin algebra $A$ with $\gldim A\leq 2$ and $M=A\oplus DA$).
	Then, for every short exact
	sequence $\epsilon\colon 0\to T\to M\to F\to 0$, the set $\tors(M/\epsilon)$
	is contained either in $\tors^d\Acal\subseteq\tors\Acal$ or in its complement.
\end{cor}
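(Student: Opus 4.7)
The plan is to read off the conclusion by combining both parts of Corollary~\ref{cor:upper-lower} simultaneously. Since $\Acal$ is a length category, every object is both noetherian and artinian; in particular, $M$ is a noetherian generator and an artinian cogenerator at the same time. Thus both items of Corollary~\ref{cor:upper-lower} apply to $M$, and the set
\[S := \tors^d\Acal \cap \tors(M/\epsilon)\]
is simultaneously an upper-set and a lower-set of $\tors(M/\epsilon)$.

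The second ingredient is the interval structure of $\tors(M/\epsilon)$ recalled in \S\ref{subsec:lattice}: either $\tors(M/\epsilon)$ is empty (which happens exactly when $\Acal(T,F)\neq 0$, and then the statement is vacuously true), or it is a genuine interval in $\tors\Acal$ whose minimum is the torsion pair generated by $T$ and whose maximum is the torsion pair cogenerated by $F$. Granted this, the conclusion follows from the elementary order-theoretic fact that a subset $S$ of an interval which is at once an upper-set and a lower-set is either empty or the whole interval: if $S$ contains some $x$, the lower-set property forces $S$ to contain the minimum of $\tors(M/\epsilon)$, and then the upper-set property forces $S$ to contain every element above the minimum, hence all of $\tors(M/\epsilon)$. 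So either $S=\emptyset$, in which case $\tors(M/\epsilon)$ lies in the complement of $\tors^d\Acal$, or $S=\tors(M/\epsilon)$, in which case $\tors(M/\epsilon)\subseteq\tors^d\Acal$.

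No real obstacle is expected: the argument is a direct assembly of Corollary~\ref{cor:upper-lower} with the interval description of $\tors(M/\epsilon)$. The only point that requires a moment's attention is to check that all the hypotheses of Corollary~\ref{cor:upper-lower} are met, so that both the upper-set and the lower-set conclusions can be invoked at the same time; this is automatic from the length hypothesis on $\Acal$ together with the assumption that $M$ is both a generator and a cogenerator. The example $\Acal=\mod A$ with $A$ an artin algebra of global dimension at most two and $M=A\oplus DA$ satisfies these hypotheses trivially, since $A$ is a projective generator and $DA$ is an injective cogenerator.
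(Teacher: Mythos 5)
Your proof is correct and follows essentially the same route as the paper: since $\Acal$ is a length category, $M$ is simultaneously a noetherian generator and an artinian cogenerator, so both parts of Corollary~\ref{cor:upper-lower} apply, and a subset of the interval $\tors(M/\epsilon)$ that is at once an upper-set and a lower-set is empty or the whole interval. Nothing is missing.
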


\begin{proof}
	Since $\Acal$ is a length category, the sets $\tors(M/\epsilon)$ are
	intervals, so their only subsets which are at the same time upper- and lower-
	are the empty set and themselves. By Corollary~\ref{cor:upper-lower}, the
	intersection $\tors(M/\epsilon)\cap\tors^d\Acal$ must then have this form.
\end{proof}

Another interesting consequence is the following result.

\begin{prop}\label{prop:gldim2-chains}
	Let $\Acal$ be an abelian category with $\gldim\Acal\leq2$. Then:
	\begin{enumerate}
		\item if $\Acal$ has a noetherian generator, $\tors^d\Acal$ is
			closed under joins of arbitrary chains.
		\item if $\Acal$ has an artinian cogenerator, $\tors^d\Acal$ is
			closed under meets of arbitrary chains.
	\end{enumerate}
	In particular, the conclusion of (1) holds if $\Acal=\mod R$ for a
	right noetherian ring $R$ of $\gldim R\leq 2$, and both conclusions hold if
	$\Acal=\mod A$ for an artin algebra $A$ of $\gldim A\leq 2$.
\end{prop}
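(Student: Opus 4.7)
The strategy is to reduce the claim for the join $\tau=\bigvee_i\tau_i$ of a chain in $\tors^d\Acal$ to a single torsion pair in that chain, by exploiting the noetherianity of the generator to stabilise the ascending chain of torsion subobjects of $M$. I will outline (1); part (2) is formally dual, using descending chains of quotients of the artinian cogenerator.

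Fix a noetherian generator $M$ of $\Acal$, a chain $(\tau_i)_{i\in I}$ in $\tors^d\Acal$, and write $0\to T_i\to M\to F_i\to 0$ for the torsion sequence of $M$ with respect to $\tau_i=(\Tcal_i,\Fcal_i)$. The key step is that, since the chain is ordered by inclusion of torsion classes, the subobjects $T_i=\tr_{\Tcal_i}M\subseteq M$ form an ascending chain, which by noetherianity of $M$ stabilises at some index $i_0$; consequently also $F_i=F_{i_0}$ for every $i\geq i_0$. Setting $\tau=(\Tcal,\Fcal)$ with $\Fcal=\bigcap_i\Fcal_i$, I would next verify that $0\to T_{i_0}\to M\to F_{i_0}\to 0$ is the torsion sequence of $M$ with respect to $\tau$: on the one hand $T_{i_0}\in\Tcal_{i_0}\subseteq\Tcal$; on the other hand $F_{i_0}\in\Fcal_i$ for every $i$, because $F_{i_0}=F_i\in\Fcal_i$ for $i\geq i_0$ and $F_{i_0}\in\Fcal_{i_0}\subseteq\Fcal_i$ for $i<i_0$, so $F_{i_0}\in\Fcal$; uniqueness of torsion sequences concludes.

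To finish, apply Proposition~\ref{prop:gldim2}(1) to $\tau_{i_0}\in\tors^d\Acal$ to obtain $M/\tr_{F_{i_0}}M\in\cogen\Tcal_{i_0}$. Since the torsion sequences agree we have $\tr_FM=\tr_{F_{i_0}}M$, and since $\Tcal_{i_0}\subseteq\Tcal$ gives $\cogen\Tcal_{i_0}\subseteq\cogen\Tcal$, this yields $M/\tr_FM\in\cogen\Tcal$; a second application of Proposition~\ref{prop:gldim2}(1), in the reverse direction, then gives $\tau\in\tors^d\Acal$. The main conceptual hurdle is really the identification of the torsion sequence of $M$ under the join with the stabilised one; once this is in hand the rest is a direct application of the criterion of \S\ref{subsec:gldim2}. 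The ``in particular'' clause follows from the fact that $\mod R$ (respectively, $\mod A$) is noetherian (respectively, length), so that $\tors\Acal$ is a complete lattice and $R_R$ (respectively, $A\oplus DA$) serves as a noetherian generator (respectively, as both a noetherian generator and an artinian cogenerator).
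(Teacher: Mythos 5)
Your argument is correct and follows essentially the same route as the paper: use noetherianity of $M$ to stabilise the chain of torsion parts, observe that the join has the same torsion sequence for $M$ as the stabilised tail, and conclude via the criterion of Proposition~\ref{prop:gldim2}(1). The only difference is cosmetic — where the paper cites the interval structure of $\tors(M/\epsilon)$ and the upper-set property of Corollary~\ref{cor:upper-lower}, you inline those two facts (identifying the torsion sequence of $\bigvee_i\tau_i$ directly and transferring the criterion via $\cogen\Tcal_{i_0}\subseteq\cogen\Tcal$), which is an equivalent unpacking of the same ideas.
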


\begin{proof}
	We prove (1), as the argument for (2) is dual.
	Consider a chain $\tau_1\leq \tau_2\leq\cdots$ in $\tors^d\Acal$. If $M$ is a
	noetherian generator, this chain of torsion pairs induces a chain of torsion parts
	$t_1\subseteq t_2\subseteq\cdots \subseteq M$ of $M$, which must stabilise by
	assumption. If $N\geq 1$ is such that $t_i=t_N$ for every $i\geq N$ and we
	write $\epsilon\colon 0\to t_N\to M\to f_N\to 0$ for the corresponding torsion
	sequence, this means that $\tau_i\in\tors(M/\epsilon)$ for every $i\geq N$.
	Since $\tors(M/\epsilon)$ is an interval, it is closed under arbitrary joins,
	so it contains $\bigvee_{i\geq N}\tau_i$.
	We conclude because $\tors^d(\Acal)\cap \tors(M/\epsilon)$ is an upper-set
	in $\tors(M/\epsilon)$ by Corollary~\ref{cor:upper-lower} and it contains $\tau_N\leq \bigvee_{i\geq
	N}\tau_i$.
\end{proof}

\subsection{Finiteness of the partition $\tors(M/\epsilon)$}

Motivated by Corollary~\ref{cor:derived-partition}, we ask the question of when
the partition of $\tors(\Acal)$ into intervals $\tors(M/\epsilon)$ induced by an
object $M$ is finite. Of course this can happen trivially, as
$\tors(\Acal)=[\mathbf{0},\mathbf{1}]$ is an interval itself, corresponding to
the choice $M=0$. We also mentioned in \S\ref{subsec:lattice} that this
partition consists of two classes if and only if $M$ is torsion-simple.

In view of the Corollary, however, one would like $M$ to be a generator
and a cogenerator, which would show that, under the hypothesis that $\Acal$ is a
length category with
$\gldim\Acal\leq 2$, the subset $\tors^d(\Acal)$ is a union of
finitely many intervals in $\tors(\Acal)$ (a strong property for a subset of a
lattice).

We collect a few examples of objects $M$ for which the partition of
$\tors(\Acal)$ is finite. Observe that the partition is indexed by the torsion
sequences $\epsilon$ for $M$, which in turn are naturally in bijection with the
set $\mathbf{t}M$ of possible torsion parts of $M$.

\begin{lemma}
	For a commutative noetherian ring $R$, the set $\mathbf{t}M$ is finite for
	every $M$ in $\mod R$.
\end{lemma}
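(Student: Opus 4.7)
The plan is to show that every torsion part $t \in \mathbf{t}M$ is recovered from $\Ass t$ as $t = \Gamma_{V(\Ass t)}(M)$, where $\Gamma_V(M) := \{m \in M : \Supp(Rm) \subseteq V\}$ is the section functor at a specialization-closed $V \subseteq \Spec R$ and $V(S)$ denotes the specialization closure of $S \subseteq \Spec R$. Since $\Ass M$ is finite (as $M$ is finitely generated over a noetherian ring), this will yield an injection $\mathbf{t}M \hookrightarrow 2^{\Ass M}$ and hence the desired finiteness $|\mathbf{t}M|\leq 2^{|\Ass M|}$.

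I first establish the intrinsic criterion $\mathbf{t}M = \{t \leq M : \Hom_R(t, M/t) = 0\}$. The forward direction is immediate. For the converse, the class $\{T \in \mod R : \Hom_R(T, M/t) = 0\}$ contains $t$ and is closed under quotients, extensions, and direct sums, so it contains the smallest torsion class $\Tcal$ generated by $t$. Thus $M/t \in \Tcal^\perp$, and in the torsion pair $(\Tcal, \Tcal^\perp)$ the module $t$ is the torsion part of $M$: any strictly larger $\Tcal$-submodule $t'$ would have $t'/t \subseteq M/t$ belonging to $\Tcal \cap \Tcal^\perp = 0$.

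The main step is then the identity $t = \Gamma_{V(\Ass t)}(M) =: u$ for $t \in \mathbf{t}M$. The inclusion $t \subseteq u$ is immediate from the definition of $\Gamma$. Assume for contradiction that $u \supsetneq t$ and pick $\pf \in \Ass(u/t)$. Since $u/t$ is a quotient of $u$ and $\Supp u \subseteq V(\Ass t)$, we have $\pf \in V(\Ass t)$, so $\pf \supseteq \qf$ for some $\qf \in \Ass t$ and in particular $t_\pf \neq 0$. Applying Nakayama's lemma to the nonzero finitely generated $R_\pf$-module $t_\pf$ yields a surjection $t_\pf \twoheadrightarrow k(\pf)$; the natural isomorphism $\Hom_R(t, R/\pf)_\pf \cong \Hom_{R_\pf}(t_\pf, k(\pf))$, valid because $t$ is finitely generated, then produces a nonzero homomorphism $\phi: t \to R/\pf$. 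Composing $\phi$ with the inclusion $R/\pf \hookrightarrow u/t \hookrightarrow M/t$ gives a nonzero element of $\Hom_R(t, M/t)$, contradicting the criterion and forcing $u = t$.

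The production of $\phi$ via Nakayama at the localization $R_\pf$ is the main technical point, and precisely where the commutative noetherian hypothesis is used; the rest of the argument is bookkeeping with associated primes and supports through short exact sequences, which behaves well in this setting.
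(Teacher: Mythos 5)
Your proof is correct, and it takes a genuinely different route from the paper. The paper simply invokes the known classification of torsion pairs in $\mod R$ over a commutative noetherian ring: every torsion pair is governed by a subset $V\subseteq\Spec R$, with $m\in M$ torsion precisely when $\Ass(mR)\subseteq V$, so that $\mathbf{t}M$ injects into the (finite) power set of $\Ass M$. You avoid citing this classification and instead reprove the fragment of it that is actually needed: starting from the intrinsic characterisation $\mathbf{t}M=\{\,t\leq M\mid \Hom_R(t,M/t)=0\,\}$ (your argument for which, via the torsion class generated by $t$, is correct and is essentially the observation underlying the paper's discussion of $\tors(a/\epsilon)$ in \S\ref{subsec:lattice}), you show directly that any torsion part satisfies $t=\Gamma_{V(\Ass t)}(M)$, using associated primes, localisation at $\pf$, and Nakayama to manufacture a nonzero map $t\to R/\pf\hookrightarrow M/t$ whenever $\Gamma_{V(\Ass t)}(M)\supsetneq t$. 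The trade-off: the paper's proof is a one-liner given the structure theory from the literature, while yours is self-contained, uses only standard commutative algebra, and yields the explicit bound $|\mathbf{t}M|\leq 2^{|\Ass M|}$ together with the slightly stronger structural statement that every torsion part of $M$ is the section-functor submodule attached to the specialisation closure of its associated primes.
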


\begin{proof}
	The torsion theory of $\mod R$ for a commutative noetherian ring is controlled
	by the prime spectrum $\Spec R$, via the notion of the set of \emph{associated
	primes} $\Ass M\subseteq\Spec R$ of a module $M\in\mod R$. This is a finite
	set. Every torsion pair in $\mod R$ is described by a set $V\subseteq\Spec R$,
	in such a way that an element $m\in M$ lies in the torsion part of $M$ if
	and only if $\Ass(mR)\subseteq\Ass(M)$ is contained in $V$. Therefore,
	$\mathbf{t}M$ is in bijection with some of the finitely many subsets of
	$\Ass(M)$.
\end{proof}

Returning to length categories, we have the following.

\begin{lemma}\label{lemma:c-in-a}
		Let $\Acal$ be a length category, and let $\Ccal\subseteq\Acal$ be a class
		of objects such that:
		\begin{enumerate}[label=(\alph*)]
			\item	$\Ccal$ is closed under subobjects;
			\item every object of $\Ccal$ can be written as a finite direct sum of objects
			from a finite set $\Scal$.
		\end{enumerate}
		Then, for every object $M$ of $\Ccal$ the set of isomorphism classes of
		subobjects of $M$ is finite.
\end{lemma}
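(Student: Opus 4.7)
The plan is to exploit the length function on $\Acal$, which is well-defined and additive on short exact sequences because $\Acal$ is a length category. The idea is that any subobject $N\leq M$ lies in $\Ccal$ by hypothesis (a), so by (b) it has a description as a finite direct sum of objects from the finite set $\Scal$. This means the isomorphism class of $N$ is captured by a multiplicity tuple $(n_s)_{s\in\Scal}\in\Nbb^\Scal$, and finiteness will follow as soon as we bound the set of such tuples that can appear.

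First I would discard the zero object from $\Scal$ if present, so that every $s\in\Scal$ has $\ell(s)\geq 1$. Next, for any subobject $N\leq M$, the short exact sequence $0\to N\to M\to M/N\to 0$ and additivity of length give $\ell(N)\leq\ell(M)$. Writing $N\cong\bigoplus_{s\in\Scal}s^{n_s}$ as granted by (b), additivity yields
\[\sum_{s\in\Scal}n_s\,\ell(s)=\ell(N)\leq\ell(M).\]
Since $\Scal$ is finite and each $\ell(s)\geq 1$, only finitely many tuples $(n_s)_{s\in\Scal}\in\Nbb^\Scal$ satisfy this inequality. Each such tuple determines at most one isomorphism class of object in $\Acal$, so the set of isomorphism classes of subobjects of $M$ is finite, as claimed.

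There is no substantial obstacle here: the argument is a direct application of additivity of length together with the finiteness of $\Scal$. The one point of minor care is that distinct tuples $(n_s)$ might in principle yield isomorphic direct sums (so the map from tuples to isomorphism classes is only surjective, not bijective), but this does not affect the finiteness conclusion. Note also that closure of $\Ccal$ under subobjects is used only to guarantee $N\in\Ccal$, so that (b) applies to $N$; the length bound does the rest of the work.
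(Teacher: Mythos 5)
Your argument is correct and is essentially the paper's proof: subobjects of $M$ lie in $\Ccal$ by (a), hence decompose as in (b), and the length bound $\ell(N)\leq\ell(M)$ together with finiteness of $\Scal$ leaves only finitely many possible multiplicity tuples. Your extra care about discarding a zero object of $\Scal$ and about the tuple-to-isomorphism-class map being merely surjective is fine but not a substantive difference.
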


\begin{proof}
	Every subobject of $M$ lies in $\Ccal$ by (a), so it is determined up to
	isomorphism by the exponents of the objects of $\Scal$ in its decomposition
	given by (b). There are only finitely many choices for these exponents, as
	$\Scal$ is finite and the total length is bound by that of $M$.
\end{proof}

Examples of such classes $\Ccal\subseteq\Acal$ are: any class of semisimple
objects $\add \Scal\subseteq\Acal$, when $\Scal$ is a finite set of simple
objects of $\Acal$; the class $\proj A\subseteq\mod A$ of projectives over a
hereditary artin algebra $A$. Dually, the class $\inj A\subseteq\mod A$ of
injectives over a hereditary artin algebra $A$ satisfies the dual properties.
We are interested in these classes because of the following.

\begin{lemma}
	Whenever $\Ccal\subseteq\Acal$ is a class satisfying the conditions of
	Lemma~\ref{lemma:c-in-a} or their duals, the set $\mathbf{t}M$ is finite for
	every $M$ in $\Ccal$.
\end{lemma}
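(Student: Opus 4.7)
The plan is to produce, in both cases, an injection of $\tbf M$ into a finite set supplied by Lemma~\ref{lemma:c-in-a} (or its dual). Recall that $\tbf M$ consists of subobjects $t\subseteq M$ which arise as the torsion part of $M$ with respect to some torsion pair of $\Acal$, so the set carries two pieces of information: the isomorphism type of $t$, and its embedding into $M$.

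First I would treat the case where $\Ccal$ is closed under subobjects. Since $M\in\Ccal$ and $\Ccal$ is closed under subobjects, every subobject of $M$ lies in $\Ccal$, so Lemma~\ref{lemma:c-in-a} gives that the set of isomorphism classes of subobjects of $M$ is finite. I then consider the map $\tbf M\to\{\text{iso classes of subobjects of }M\}$ sending a torsion part $t\subseteq M$ to its isomorphism class, and show it is injective. Suppose $t_1,t_2\in\tbf M$ are isomorphic as objects, with $t_2$ the torsion part of $M$ with respect to some torsion pair $\tau_2=(\Tcal_2,\Fcal_2)$. Then $t_2\in\Tcal_2$, and closure of $\Tcal_2$ under isomorphism yields $t_1\in\Tcal_2$; maximality of $t_2$ as a torsion subobject of $M$ forces $t_1\subseteq t_2$. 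Since $\Acal$ is a length category and $t_1\cong t_2$, these subobjects have the same length, so $t_1=t_2$ as subobjects of $M$.

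For the dual case (where $\Ccal$ is closed under quotients and every object is a finite direct sum from a finite $\Scal$), I would run the dual argument: the dual of Lemma~\ref{lemma:c-in-a} yields finiteness of isomorphism classes of quotients of $M$, and I map $\tbf M$ into this set by $t\mapsto M/t$ (the torsion-free part). Injectivity is by the symmetric reasoning: if $M/t_1\cong M/t_2$ and $M/t_2\in\Fcal_2$, then $M/t_1\in\Fcal_2$, so by minimality of $t_2$ among subobjects of $M$ with torsion-free quotient we get $t_2\subseteq t_1$, and the length equality $\ell(M/t_1)=\ell(M/t_2)$ again forces $t_1=t_2$.

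I do not expect a genuine obstacle here: the finiteness of the target is immediate from Lemma~\ref{lemma:c-in-a}, and the only nontrivial point is verifying injectivity of $t\mapsto[t]$ (or $t\mapsto[M/t]$). The potentially confusing step is distinguishing actual subobjects from isomorphism classes of subobjects in the length category (where isomorphic subobjects of $M$ can be distinct, as with the lines in $k^2$), and the key observation that resolves this is that within $\tbf M$ isomorphic torsion parts must coincide as subobjects, thanks to iso-closure of torsion classes combined with the length argument above.
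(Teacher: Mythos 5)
Your proof is correct and follows essentially the same route as the paper: map torsion parts to isomorphism classes of subobjects (finite by Lemma~\ref{lemma:c-in-a}), and use iso-closure of the torsion class together with maximality of the torsion part and the length argument to get injectivity, with the dual argument via quotients and torsion-free parts in the dual case. Your explicit spelling-out of the length step only makes precise what the paper leaves implicit.
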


\begin{proof}
	We prove the claim for $\Ccal\subseteq\Acal$ as in Lemma~\ref{lemma:c-in-a}.
	Let $M\in \Ccal$, so that it only has finitely many isomorphism classes of
	subobjects. A subobject of $M$ which is isomorphic to a torsion
	subobject will be itself torsion; therefore, a torsion part of $M$ must be the
	only subobject in its own isomorphism class. Hence there is at most one for
	every isomorphism class of subobjects of $M$, and we conclude.

	If $\Ccal$ satisfies the dual properties, the objects of $M$ will have
	finitely many quotients up to isomorphisms, and one argues using the
	torsion-free parts.
\end{proof}

\begin{rmk}
	We remark that in general it is possible, even for a (nonprojective and
	noninjective) finitely generated module
	over a hereditary finite-dimensional algebra, to have infinitely many
	nonisomorphic submodules. For example, over the $3$-Kronecker algebra
	$k(\begin{tikzcd}[sep=10pt,cramped]\bullet \ar[shift left=2pt]{r}\ar{r}\ar[shift
	right=2pt]{r} & \bullet \end{tikzcd})$, consider the \emph{tree module of type
	$(2,2)$} (in the notation of \cite{ring-16}):
	\[\begin{tikzcd}
		x \ar{d}{\alpha} & y \ar{dl}{\beta} \ar{d}{\gamma} \\
		\alpha x=\beta y & \gamma y
	\end{tikzcd}\]
	where $x,y$ are generators and $\alpha, \beta, \gamma$ denote the three
	scalars corresponding to the arrows of the quiver. Then the 
	elements $\lambda x-y$, for any scalar $\lambda\in k$, are annihilated by
	$\alpha+\lambda\beta$, and therefore they all generate nonisomophic cyclic
	submodules.
	Observe, however, that these submodules are not torsion parts, as
	they all have an epimorphism to the corresponding quotient, which is always
	isomorphic to the simple injective module. Therefore, we still do not have an
	answer for the following question, even stated in this strong form.
\end{rmk}

\begin{question}
	Is the set $\mathbf{t}M$ finite for every finitely generated module $M$ over an
	artin algebra?
\end{question}

Turning back to our partition of $\tors(A)$, we deduce the following result.

\begin{prop}\label{prop:hereditary-partition-finite}
	If $A$ is a hereditary artin algebra, the partition of $\tors(A)$ given by the
	intervals $\tors((A\oplus DA)/\epsilon)$ is finite. In particular, the subset
	$\tors^d(A)$ is the union of finitely many intervals in $\tors(A)$.
\end{prop}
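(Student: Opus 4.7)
The plan is to reduce the claim to the two lemmas immediately preceding the proposition by exhibiting finite parametrisations of $\mathbf{t}A$ and $\mathbf{t}(DA)$ separately, and then combining them additively.

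First, I apply the lemma immediately following Lemma~\ref{lemma:c-in-a} to the class $\Ccal=\proj A\subseteq\mod A$: condition (a) of Lemma~\ref{lemma:c-in-a}, closure under subobjects, is precisely the hereditariness of $A$; condition (b), that every object of $\Ccal$ decomposes as a finite direct sum of objects from a finite set $\Scal$, holds because $A$ is an artin algebra and $\Scal$ can be taken to be a set of representatives of the indecomposable projectives. This yields that $\mathbf{t}A$ is finite. Symmetrically, the dual version applied to $\Ccal=\inj A$---closed under quotients by the hereditariness of $A$, and with only finitely many indecomposable injectives---gives that $\mathbf{t}(DA)$ is finite.

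Next, I observe that for any torsion pair $\tau=(\Tcal,\Fcal)$ in $\mod A$ the torsion radical $t_\tau$ commutes with finite direct sums: since $\Tcal$ is closed under finite coproducts and $\Fcal$ under finite products (which coincide in $\mod A$), the direct sum of the torsion sequences of $A$ and of $DA$ is a torsion sequence for $A\oplus DA$, and uniqueness forces $t_\tau(A\oplus DA)=t_\tau(A)\oplus t_\tau(DA)$. Therefore the assignment $\tau\mapsto(t_\tau(A),t_\tau(DA))$ defines an injection $\mathbf{t}(A\oplus DA)\hookrightarrow \mathbf{t}A\times\mathbf{t}(DA)$, and finiteness of the right-hand side forces finiteness of $\mathbf{t}(A\oplus DA)$. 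Since this set indexes the partition, the first assertion follows.

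Finally, since $A$ is hereditary we have $\gldim A\leq 2$, and $M=A\oplus DA$ is simultaneously a generator and a cogenerator of the length category $\mod A$. Hence Corollary~\ref{cor:derived-partition} applies to every class $\tors(M/\epsilon)$ of the partition and forces each one to lie entirely inside $\tors^d(A)$ or entirely in its complement; collecting the finitely many classes contained in $\tors^d(A)$ yields the ``in particular'' statement. There is no serious obstacle in this argument: the only substantive point is the additivity of $t_\tau$ on $A\oplus DA$, which is a direct consequence of the closure properties of torsion and torsion-free classes.
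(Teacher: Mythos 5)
Your proof is correct and follows essentially the same route as the paper: finiteness of $\mathbf{t}A$ and $\mathbf{t}(DA)$ via Lemma~\ref{lemma:c-in-a} and its dual applied to $\proj A$ and $\inj A$, hence finiteness of $\mathbf{t}(A\oplus DA)$, and then Corollary~\ref{cor:derived-partition} for the ``in particular'' claim. The only difference is that you spell out the additivity of the torsion radical giving the injection $\mathbf{t}(A\oplus DA)\hookrightarrow\mathbf{t}A\times\mathbf{t}(DA)$, a step the paper leaves implicit.
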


\begin{proof}
	As mentioned, the classes $\proj A$ and $\inj A$ satisfy the conditions of
	Lemma~\ref{lemma:c-in-a} and their duals, respectively. Therefore $\mathbf{t}A$ and
	$\mathbf{t}DA$ are finite, and thus so is $\mathbf{t}(A\oplus DA)$. The last
	claim follows from Corollary~\ref{cor:derived-partition}.
\end{proof}

This result is of course of special interest when $\tors(A)$ is infinite (i.e.\
when $A$ is not \emph{$\tau$-tilting finite}, see \cite{demo-iyam-jass-19}, and
\S\ref{subsec:2tilting} later on), see 
Figure~\ref{fig:kronecker} for an illustration.
We remark that for finite-dimensional algebras we have the following characterisation.

\begin{prop}
	Let $\Lambda$ be a hereditary finite-dimensional algebra. Then $\tors(\Lambda)$ is finite
	if and only if $\Lambda$ is representation-finite.
\end{prop}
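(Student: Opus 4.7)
The plan is to prove the two implications separately.

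If $\Lambda$ is representation-finite, say with indecomposable modules $M_1, \ldots, M_r$ up to isomorphism, then every torsion class $\Tcal \in \tors(\Lambda)$ is closed under direct summands (being closed under quotients) and is therefore uniquely determined by the subset $\{i : M_i \in \Tcal\} \subseteq \{1, \ldots, r\}$; in particular $|\tors(\Lambda)| \leq 2^r$.

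For the converse, I would invoke the Demonet--Iyama--Jasso theorem \cite{demo-iyam-jass-19} already cited in the introduction: for any artin algebra, $|\tors(\Lambda)|$ is finite if and only if $\Lambda$ is $\tau$-tilting finite. Since every basic classical tilting $\Lambda$-module is in particular a basic support $\tau$-tilting module, it suffices to exhibit infinitely many pairwise non-isomorphic basic tilting $\Lambda$-modules when $\Lambda$ is hereditary representation-infinite. Reducing to a connected block if necessary, I may assume $\Lambda$ is connected and non-Dynkin; the classical structure theory of the Auslander--Reiten quiver of such $\Lambda$ then provides an infinite preprojective component. Fixing an indecomposable projective $P$, the iterated inverse Auslander--Reiten translates $\tau^{-n}P$, $n \geq 0$, are pairwise non-isomorphic indecomposables, and each finite direct sum $P \oplus \tau^{-1}P \oplus \cdots \oplus \tau^{-n}P$ is $\Ext^1$-rigid, hence partial tilting; by Bongartz's completion lemma it extends to a basic tilting module $T_n$ containing $\tau^{-n}P$ as a summand, giving the required infinite family of basic tilting modules.

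The main non-trivial input is the classical representation-theoretic fact that a connected representation-infinite hereditary artin algebra has an infinite preprojective component and an associated infinite supply of basic tilting modules (see e.g.\ Happel's book on triangulated categories). A minor delicate point is verifying that the completions $T_n$ are genuinely pairwise non-isomorphic (equivalently, that $\gen T_n \neq \gen T_m$ for $n \neq m$, since a basic tilting module is recovered from its torsion class as the $\Ext$-projective generator); this follows from the $\Hom$/$\Ext$-vanishing pattern within the preprojective component but is standard. The remaining steps of the argument are formal.
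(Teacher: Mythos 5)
Your forward direction is fine, and reducing the converse to Demonet--Iyama--Jasso plus the existence of infinitely many basic (support $\tau$-)tilting modules is a legitimate route, different from the paper's (the paper instead exhibits a single non-functorially-finite torsion pair, namely the one whose torsion-free class is the additive closure of the preprojective component, and concludes from \cite[Thm.~3.8]{demo-iyam-jass-19}). However, the central rigidity claim in your construction is false: a direct sum taken along a $\tau^{-}$-orbit, $P\oplus\tau^{-1}P\oplus\cdots\oplus\tau^{-n}P$, is not $\Ext^1$-rigid. By the Auslander--Reiten formula, $\Ext^1(\tau^{-(j+1)}P,\tau^{-j}P)\cong D\overline{\Hom}(\tau^{-j}P,\tau\tau^{-(j+1)}P)=D\overline{\Hom}(\tau^{-j}P,\tau^{-j}P)$ (morphisms modulo those factoring through injectives), and this is nonzero as soon as $\tau^{-j}P$ is not injective, since then its identity cannot factor through an injective module; in the connected representation-infinite case no preprojective module is injective, so these extension groups never vanish. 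Concretely, over the Kronecker algebra with $P$ the simple projective one computes via the Euler form that $\dim\Ext^1(\tau^{-1}P,P)=1$. So already $P\oplus\tau^{-1}P$ is not a partial tilting module, and Bongartz completion cannot be applied to it: the step ``$\Ext^1$-rigid, hence partial tilting'' is where the proof breaks.

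The gap is repairable within your strategy. Each indecomposable preprojective module over a hereditary algebra is exceptional (no self-extensions), so the single module $\tau^{-n}P$ is partial tilting; its Bongartz completion $T_n$ is a basic tilting module containing $\tau^{-n}P$ as a summand, and since a basic tilting module has only finitely many indecomposable summands while the $\tau^{-n}P$ are pairwise non-isomorphic, the family $(T_n)_{n\geq 0}$ contains infinitely many isomorphism classes (this also disposes of the ``delicate point'' you mention). Alternatively, take the slice modules $\tau^{-n}\Lambda$, which are already tilting: rigidity there uses $\Hom(\tau^{-1}P_i,P_j)=0$, which holds because over a hereditary algebra a nonzero image would be a projective summand of $\tau^{-1}P_i$. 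Either repair gives infinitely many functorially finite torsion pairs and hence the conclusion; by comparison, the paper's argument avoids tilting theory altogether, needing only that the preprojective component is a torsion-free class which cannot cover $D\Lambda$, while your (repaired) argument gives the slightly stronger information that infinitely many of these torsion pairs are tilting, i.e.\ lie in $\tors^d(\Lambda)$.
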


\begin{proof}
	Clearly, if $\Lambda$ is representation-finite, it is also $\tau$-tilting
	finite. For the converse, assume $\Lambda$ is representation-infinite. By
	\cite[Theorem~3.8]{demo-iyam-jass-19}, to show that
	that it is not $\tau$-tilting finite, we need to construct a torsion pair that
	is not \emph{functorially finite}, that is, whose torsion-free class is not
	covering. The (additive closure of the) preprojective component
	$\Pcal(\Lambda)$ is the torsion-free class of a torsion pair (see for example
	\cite[Cor.~2.13]{asse-sims-skow-06}). However, it is not a covering class.
	Indeed, $\Pcal(\Lambda)$ contains infinitely many indecomposable modules, all
	with a nonzero morphism to the injective cogenerator $D\Lambda$, but for every
	indecomposable module $P$ in $\Proj(\Lambda)$, the set $\Hom(Q,P)$ is zero for
	all but finitely many indecomposable $\Lambda$-modules
	\cite[Lemma~VII.2.5]{asse-sims-skow-06}. Therefore, no module in
	$\Pcal(\Lambda)$ can be a $\Pcal(\Lambda)$-cover of $D\Lambda$.
\end{proof}

\begin{figure}
\usetikzlibrary{decorations.text, fit, positioning, quotes, backgrounds,calc}
\begin{tikzpicture}
	\node (At) at (0,5) {$\bullet$};
	\node (Bt) at (1,4.3) {$\bullet$};
	\node (Ct) at (2,3.3) {$\bullet$};
	\node (Dt) at (3,1.5) {$\bullet$};
	\node (Et) at (2,.7) {$\bullet$};
	\node (Ft) at (4,.7) {$\bullet$};
	\node (Ab) at (0,-5) {$\bullet$};
	\node (Bb) at (1,-4.3) {$\bullet$};
	\node (Cb) at (2,-3.3) {$\bullet$};
	\node (Db) at (3,-1.5) {$\bullet$};
	\node (Eb) at (2,-.7) {$\bullet$};
	\node (Fb) at (4,-.7) {$\bullet$};

	\node (S) at (-3,0) {$\bullet$};

	\draw[-] (At) -- (Bt);
	\draw[-] (Bt) -- (Ct);
	\path (Ct) --  node[sloped]{$\cdots$} (Dt);
	\draw[-] (Ab) -- (Bb);
	\draw[-] (Bb) -- (Cb);
	\path (Cb) --  node[sloped]{$\cdots$} (Db);

	\draw (Et) -- (Dt) -- (Ft);
	\draw (Eb) -- (Db) -- (Fb);

	\draw (At) -- (S) -- (Ab);

	\path (Et) -- node[sloped]{$\cdots$} (Eb);
	\path (Et) -- node[sloped]{$\cdots$} (Ft);
	\path (Et) -- node[sloped,near start]{$\cdots$} node[sloped,near end]{$\cdots$} (Fb);
	\path (Ft) -- node[sloped]{$\cdots$} (Fb);
	\path (Eb) -- node[sloped]{$\cdots$} (Fb);
	\path (Ft) -- node[sloped,near start]{$\cdots$} node[sloped,near end]{$\cdots$} (Eb);
	
	\begin{scope}[on background layer]
		\fill[fill=lightgray] (At) circle [radius=12pt];
		\fill[fill=lightgray] (Bt) circle [radius=12pt];
		\fill[fill=lightgray] (Ab) circle [radius=12pt];
		\fill[fill=lightgray] (Bb) circle [radius=12pt];
		\draw (S) circle [radius=12pt];
		\fill[fill=lightgray,rounded corners=24pt] (1.5,0) -- ($(Cb.center)+(250:24pt)$)
			.. controls (5.5,0) ..  ($(Ct.center)+(110:24pt)$) -- cycle;
	\end{scope}
\end{tikzpicture}
	\caption{The Hasse quiver of the lattice $\tors(\Lambda)$, for the Kronecker
	algebra $\Lambda=k(\bullet\rightrightarrows\bullet)$ (see
	e.g.\ \cite[Ex.~1.3]{thom-21} for an explicit description of the lattice
	points). The finitely many
	intervals of the partition given by the sets $\tors((\Lambda\oplus
	D\Lambda)/\epsilon)$ are highlighted, with the ones lying in
	$\tors^d(\Lambda)$ shaded in gray. To obtain this, one can inspect the actual
	torsion pairs, which shows that, except for the one in the center-left,
	they all split, and therefore induce derived equivalence. The only one
	remaining fails the criterion of Theorem~\ref{thm:chz}, and also the criterion
	given by Corollary~\ref{cor:path-algebra} later on.
	}
	\label{fig:kronecker}
\end{figure}

\subsection{A necessary condition for derived equivalence}

From Theorem~\ref{thm:chz}, we also deduce a necessary condition for a torsion
pair in an abelian category $\Acal$ to induce derived equivalence.

\begin{cor}\label{cor:necessary}
	If $\tau=(\Tcal,\Fcal)$ induces derived equivalence, we must have:
	\[\Fcal^\bot\subseteq
	\sub\Tcal\quad\text{and}\quad{}^\bot\Tcal\subseteq\fac\Fcal.\]
	In particular, since both $\Tcal$ and $\Fcal$ are closed under direct
	summands, we have:
	\[\Fcal^\bot\cap\Inj\Acal\subseteq
	\Tcal={}^\bot\Fcal\quad\text{and}\quad
	{}^\bot\Tcal\cap\Proj\Acal\subseteq\Fcal=\Tcal^\bot.\]
\end{cor}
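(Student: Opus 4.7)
The plan is to apply the CHZ criterion (Theorem~\ref{thm:chz}) directly to an object $a$ lying in one of the two orthogonals, and observe that the resulting four-term exact sequence collapses.

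First I would pick $a\in\Fcal^\bot$ and write down the sequence given by the criterion:
\[0\to f_0\to f_1\to a\to t_0\to t_1\to 0,\]
with $f_i\in\Fcal$ and $t_i\in\Tcal$. Since $a\in\Fcal^\bot$ and $f_1\in\Fcal$, the map $f_1\to a$ is zero. By exactness at $a$, this forces the map $a\to t_0$ to be a monomorphism, witnessing that $a\in\sub\Tcal$. The argument for the inclusion ${}^\bot\Tcal\subseteq\fac\Fcal$ is entirely dual: for $a\in{}^\bot\Tcal$ the map $a\to t_0$ vanishes, so by exactness $f_1\to a$ is an epimorphism, giving $a\in\fac\Fcal$.

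For the refinement in the \emph{in particular} clause, I would use that a torsion class $\Tcal$, being of the form ${}^\bot\Fcal$, is closed under finite direct sums and direct summands (and dually for $\Fcal$). Thus if $e\in\Fcal^\bot\cap\Inj\Acal$, from the first part we know $e$ embeds into a finite direct sum of objects of $\Tcal$, which already lies in $\Tcal$; since $e$ is injective, this monomorphism splits, exhibiting $e$ as a summand of an object in $\Tcal$, and hence $e\in\Tcal$. The projective case is dual.

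The main potential pitfall is purely notational: making sure the definition of $\sub\Tcal=\cogen\Tcal$ as subobjects of \emph{finite direct sums} of objects of $\Tcal$ collapses to subobjects of a single object of $\Tcal$, which holds because $\Tcal$ is itself closed under finite coproducts. Beyond this bookkeeping, the argument is a one-line consequence of Theorem~\ref{thm:chz}, so no serious obstacle is anticipated.
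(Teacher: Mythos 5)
Your proposal is correct and is precisely the argument the paper intends: the paper simply states that the corollary follows immediately from the CHZ sequences, and your explicit unpacking (vanishing of $f_1\to a$ forces $a\hookrightarrow t_0$, dually for the other orthogonal, plus splitting via injectivity/projectivity and closure of $\Tcal$, $\Fcal$ under direct summands) is exactly that one-line argument spelled out. Note only that the first part already gives an embedding into the single object $t_0\in\Tcal$, so the remark about finite direct sums is not even needed.
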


\begin{proof}
	Follows immediately from the existing of sequences $0\to f_0\to f_1\to a\to
	t_0\to t_1\to 0$ as in Theorem~\ref{thm:chz}.
\end{proof}

This necessary condition will be a key ingredient later on. For now, we observe
that its two halves become sufficient conditions for torsion
pairs in a TTF-triple.

\begin{prop}[{\cite[Prop.~4.3]{chen-han-zhou-19}}]\label{prop:ttf}
	Let $(\Tcal,\Fcal)$ be a torsion pair in $\Acal$.
	\begin{enumerate}
		\item Assume that $\Fcal$ is a TTF-class. Then $(\Tcal,\Fcal)$ induces
			derived equivalence if and only if $\Fcal^\bot\subseteq \sub\Tcal$. If
			$\Acal$ has injective envelopes, this is equivalent to
			$\Fcal^\bot\cap\Inj\Acal\subseteq\Tcal={}^\bot\Fcal$.
		\item Assume that $\Tcal$ is a TTF-class. Then $(\Tcal,\Fcal)$ induces
			derived equivalence if and only if ${}^\bot\Tcal\subseteq \fac\Fcal$. If
			$\Acal$ has projective covers, this is equivalent to
			${}^\bot\Tcal\cap\Proj\Acal\subseteq\Fcal=\Tcal^\bot$.
	\end{enumerate}
\end{prop}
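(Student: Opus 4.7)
The plan is to prove (1), since (2) is dual. Necessity of $\Fcal^\bot\subseteq\sub\Tcal$ was already recorded in Corollary~\ref{cor:necessary}, so the task reduces to establishing sufficiency and the reformulation under the existence of injective envelopes.

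For sufficiency, I would exploit the fact that, since $\Fcal$ is a TTF-class, the pair $(\Fcal,\Fcal^\bot)$ is itself a torsion pair in $\Acal$. Given $a\in\Acal$, let $0\to f_1\to a\to g\to 0$ be its torsion sequence with respect to this second pair, so $f_1\in\Fcal$ and $g\in\Fcal^\bot$. By hypothesis $g$ embeds into some $t_0\in\Tcal$, and setting $t_1:=t_0/g$ (which lies in $\Tcal$ because torsion classes are closed under quotients) yields an exact sequence
\[0\to f_1\to a\to t_0\to t_1\to 0.\]
This is a sequence of the form required by Theorem~\ref{thm:chz}, with $f_0=0$, so the vanishing condition in $\Acal^3(t_1,f_0)=0$ is automatic.

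For the injective reformulation, the forward direction is easy: any $i\in\Fcal^\bot\cap\Inj\Acal$ embeds into some $t\in\Tcal$ by hypothesis; this embedding splits by injectivity of $i$, exhibiting $i$ as a summand of $t$, and hence $i\in\Tcal$ because torsion classes are closed under direct summands. The converse is the main subtlety, and I would handle it by showing that, for $x\in\Fcal^\bot$, the injective envelope $E(x)$ remains in $\Fcal^\bot$. Indeed, given $y\in\Fcal$ and any morphism $\phi\colon y\to E(x)$, the preimage $\phi^{-1}(x)\subseteq y$ belongs to $\Fcal$ (since $\Fcal$, being a torsion-free class, is closed under subobjects) and maps into $x\in\Fcal^\bot$; this map is therefore zero, i.e.\ $\phi(y)\cap x=0$, and essentiality of $x\hookrightarrow E(x)$ forces $\phi=0$. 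Consequently $E(x)\in\Fcal^\bot\cap\Inj\Acal\subseteq\Tcal$, and so $x\in\sub\Tcal$.

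The real content is the sufficiency step: once the second torsion pair $(\Fcal,\Fcal^\bot)$ is produced, the CHZ sequence essentially writes itself, and the degree-three obstruction disappears because one of the $\Fcal$-terms is zero. Statement (2) is obtained by dualising all three arguments above.
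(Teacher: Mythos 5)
Your proof is correct, but it is more self-contained than the paper's: where the paper simply attributes the equivalence ``derived equivalence $\iff$ $\Fcal^\bot\subseteq\sub\Tcal$'' to Chen--Han--Zhou and only proves the injective reformulation, you actually derive that equivalence from Theorem~\ref{thm:chz}. Your sufficiency step is exactly the right use of the TTF hypothesis: the torsion sequence of $a$ with respect to the second pair $(\Fcal,\Fcal^\bot)$, spliced with an embedding of its $\Fcal^\bot$-part into an object of $\Tcal$, produces a CHZ sequence with $f_0=0$, so the $\Acal^3(t_1,f_0)$-obstruction vanishes trivially; and necessity is indeed already Corollary~\ref{cor:necessary}. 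For the reformulation, your route and the paper's coincide in substance: the paper invokes the standard fact that, the pair $(\Fcal,\Fcal^\bot)$ being hereditary, the class $\Fcal^\bot$ is closed under injective envelopes (so that $\Fcal^\bot=\sub(\Fcal^\bot\cap\Inj\Acal)$, and one concludes using closure of $\Tcal$ under direct summands), whereas you reprove that closure by hand via the essentiality argument $\phi(y)\cap x=0\Rightarrow\phi=0$ --- which is precisely the proof of that standard fact. So the only genuine difference is that you unpack the cited ingredients (the CHZ half and the hereditary-closure fact) instead of quoting them; what this buys is a proof readable without the external reference, at the cost of a little length. The dualisation for (2) is unproblematic, since $\Tcal$ being a TTF-class gives the torsion pair $({}^\bot\Tcal,\Tcal)$ and the CHZ sequence with $t_1=0$.
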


\begin{proof}
	The first part of each statement was observed in the paper by Chen, Han and
	Zhou. For the second part of (1), notice that if $\Fcal$ is a TTF-class, 
	the torsion pair $(\Fcal,\Fcal^\bot)$ is hereditary, so $\Fcal^\bot$ is
	closed under taking injective envelopes. Therefore, if $\Acal$ has injective
	envelopes, we have $\Fcal^\bot=\sub(\Fcal^\bot\cap \Inj\Acal)$. Now, since
	$\Tcal$ is closed under direct summands, we deduce that:
	\[ \Fcal^\bot\subseteq\sub\Tcal \iff \Fcal^\bot\cap\Inj\Acal\subseteq
	\sub\Tcal \iff \Fcal^\bot\cap\Inj\Acal\subseteq\Tcal.\]
	The second part of (2) is proved dually.
\end{proof}

\section{Applications of the CHZ criterion to artin algebras}
\label{sec:artin-applications}

\subsection{Third application: (co)hereditary torsion pairs over artin algebras}
\label{subsec:cohereditary}

Let $\Lambda$ be an artin algebra, and let $\simp\Lambda$ denote the set of
isoclasses of simple $\Lambda$-modules. For a module $M\in\mod(\Lambda)$, we
denote by $\supp(M)\subseteq\simp\Lambda$ the set of its composition factors,
and by $E(M)$ and $P(M)$ its injective envelope and projective cover,
respectively.

\begin{dfn}
	We define two functions on the set $\Pcal(\simp\Lambda)$ of subsets of $\simp\Lambda$:
	\begin{align*}
		\Phi_-&\colon \Pcal(\simp\Lambda)\to \Pcal(\simp\Lambda),
		\quad \Phi_-(\{\,S_1,\dots,S_r\,\})=\supp(\top E(S_1\oplus\cdots\oplus S_r))
		\\
		\Phi_+&\colon \Pcal(\simp\Lambda)\to \Pcal(\simp\Lambda),
		\quad \Phi_+(\{\,S_1,\dots,S_r\,\})=\supp(\soc P(S_1\oplus\cdots\oplus S_r)).
	\end{align*}
\end{dfn}

Before explaining how to use these functions to study derived equivalences, we
give a few examples to develop some intuition.

\begin{ex}
	If $\Lambda$ is hereditary, the top of an injective is itself injective,
	and the socle of a projective is itself projective.
	Therefore, the image of $\Phi_-$ consists of subsets of simple injectives, and
	the image of $\Phi_+$ of subsets of simple projectives.
\end{ex}

\begin{ex}\label{ex:self-injective}
	If $\Lambda$ is a self-injective algebra, then the indecomposable
	projectives are also indecomposable injectives, and therefore they have a
	simple socle. This yields the \emph{Nakayama permutation} $\nu$ on
	$\simp\Lambda$, defined by $S\mapsto \soc P(S)$, with inverse $\nu^{-1}\colon
	S\mapsto \top E(S)$. In this setting, our functions are then $\Phi_-=\nu^{-1}_\ast$
	and $\Phi_+=\nu_\ast$, so they are inverse bijections.

	In particular, the algebra $\Lambda$ is said to be \emph{weakly symmetric}
	(originally introduced in \cite{naka-nesb-38}, see
	\cite[p.~378]{skow-yama-11}) if the Nakayama permutation $\nu$ is the identity. In this case,
	$\Phi_-$ and $\Phi_+$ are the identity as well.
\end{ex}

\begin{ex}\label{ex:path-algebra}
	Let $\Lambda=kQ/I$ be the path algebra of a finite quiver $Q=(Q_0,Q_1)$
	with admissible ideal of relations $I$. We identify $\simp\Lambda$ with the
	set of vertices $Q_0$, by the natural assignment $i\mapsto S_i=\top
	(e_i\Lambda)$. In this setting the functions $\Phi_+, \Phi_-$ admit a
	clear combinatorial description.

	Say that a path $p\colon i\leadsto j$ in $Q$ is \emph{tail-maximal} with
	respect to $I$ if $p\notin I$ and $pa\in I$ for every arrow $a\colon j\to j'$
	in $Q_1$. In other
	words, a tail-maximal path represents a nonzero element of $\Lambda$ which is
	annihilated from the right by the radical of $\Lambda$, that is,
	an element of the right socle of $\Lambda$. A
	\emph{head-maximal} path is a tail-maximal path on the opposite quiver of $Q$.
	From this description, it is clear that for every vertex $i\in Q_0$, if
	$P_i=e_i\Lambda$ is the projective cover of $S_i$, the elements of $\soc P_i$
	are represented by tail-maximal paths starting in $i$. Since $j\in Q_0$
	belongs to $\supp(M)$ if and only if $Me_j\neq 0$, we obtain:
	\begin{align*}
		\Phi_+(\{i\})&=\{\,j\in Q_0\mid \exists\, p\colon i\leadsto j\text{
			path in }Q\text{ tail-maximal for }I\,\},\text{ and dually}\\
		\Phi_-(\{i\})&=\{\,j\in Q_0\mid \exists\, p\colon j\leadsto i\text{
			path in }Q\text{ head-maximal for }I\,\}.
	\end{align*}
	Given that $\Phi_-$ and $\Phi_+$ clearly preserve unions, this gives a full
	combinatorial description of the two functions. See Figure~\ref{fig:phi+} for an
	example.
\end{ex}

\begin{figure}[t]
	\[\begin{tikzcd}[sep=small]
		& & \bullet \ar{d} & & & & & & & & \circ \ar{d} \\
		\circ & \ar{l} \circ & \ar{l} \circ \ar[""{name=S, above}]{r} &
			\circ \ar[""{name=E,above}]{r} & \circ
		\ar[-,dashed,from=S,to=E,bend left=4em]
		& \; \ar[|->]{rr}{\Phi_+} && \; &
		\bullet & \ar{l} \circ & \ar{l} \circ \ar[""{name=S, above}]{r} &
			\bullet \ar[""{name=E,above}]{r} & \circ
		\ar[-,dashed,from=S,to=E,bend left=4em]
	\end{tikzcd}
	\]
	\[\begin{tikzcd}[sep=small]
		& & \circ \ar{d} & & & & & & & & \circ \ar{d} \\
		\circ & \ar{l} \circ & \ar{l} \circ \ar[""{name=S, above}]{r} &
			\bullet \ar[""{name=E,above}]{r} & \circ
		\ar[-,dashed,from=S,to=E,bend left=4em]
		& \; \ar[|->]{rr}{\Phi_+} && \; &
		\circ & \ar{l} \circ & \ar{l} \circ \ar[""{name=S, above}]{r} &
			\circ \ar[""{name=E,above}]{r} & \bullet
		\ar[-,dashed,from=S,to=E,bend left=4em]
	\end{tikzcd}
	\]
	\[
	\begin{tikzcd}[sep=small]
		& & \bullet \ar{d} & & & & & & & & \circ \ar{d} \\
		\bullet & \ar{l} \bullet & \ar{l} \circ \ar[""{name=S, above}]{r} &
			\bullet \ar[""{name=E,above}]{r} & \bullet
		\ar[-,dashed,from=S,to=E,bend left=4em]
		& \; \ar[|->]{rr}{\Phi_+} && \; &
		\bullet & \ar{l} \circ & \ar{l} \circ \ar[""{name=S, above}]{r} &
			\bullet \ar[""{name=E,above}]{r} & \bullet
		\ar[-,dashed,from=S,to=E,bend left=4em]
	\end{tikzcd}
	\]
	\caption{
		We depict a set of vertices of a quiver with relations by marking its
		elements in black. The picture illustrates the action of $\Phi_+$ on three such
		subsets. The map $\Phi_+$ moves vertices downstream of the arrows as far as
		it can without going across a relation.
	}\label{fig:phi+}
\end{figure}

For a set $\Scal\subseteq\simp\Lambda$, denote by
$\Scal^c:=\simp\Lambda\setminus\Scal$ the complement. The class
$\filt\Scal\subseteq\mod(\Lambda)$ of
modules filtered by simples in $\Scal$ is a TTF-class, by closure properties.
Observe that for a module $M\in\mod(\Lambda)$, we have that $M$ lies in
$(\filt\Scal)^\bot=\Scal^\bot$ if and only if $\supp(\soc M)\subseteq\Scal^c$,
and it lies in ${}^\bot(\filt\Scal)={}^\bot\Scal$ if and only if $\supp(\top
M)\subseteq\Scal^\bot$.

We deduce the following result about hereditary and cohereditary
torsion pairs in $\mod(\Lambda)$.

\begin{prop}\label{prop:phi-criterion}
	Let $\Lambda$ be an artin algebra and $\Scal\subseteq\simp\Lambda$ be a set of
	simples. Then in $\mod(\Lambda)$:
	\begin{enumerate}
		\item the torsion pair $({}^\bot\Scal,\filt\Scal)$ induces derived equivalence if and only if
			$\Phi_-(\Scal^c)\subseteq\Scal^c$;
		\item the torsion pair $(\filt\Scal,\Scal^\bot)$
			induces derived equivalence if and only if
			$\Phi_+(\Scal^c)\subseteq\Scal^c$.
	\end{enumerate}
\end{prop}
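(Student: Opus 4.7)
My plan is to deduce both statements from Proposition~\ref{prop:ttf}, since $\filt\Scal$ is a TTF-class (being closed under subobjects, quotients, extensions and products) and both torsion pairs in question have $\filt\Scal$ as one of their sides. This means the TTF characterisation applies, and derived equivalence reduces to checking a simple orthogonality condition on the indecomposable injectives (for (1)) or projectives (for (2)).

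For (1), Proposition~\ref{prop:ttf}(1) applied to $\Fcal=\filt\Scal$ says that $({}^\bot\Scal,\filt\Scal)$ induces derived equivalence if and only if $\Scal^\bot\cap\Inj(\Lambda)\subseteq {}^\bot\Scal$. The first step is to identify $\Scal^\bot\cap\Inj(\Lambda)$: every indecomposable injective is of the form $E(S)$ for some simple $S$, and from the preliminary observation $\Scal^\bot=\{M\mid \supp(\soc M)\subseteq\Scal^c\}$ together with $\soc E(S)=S$, we see that $E(S)\in\Scal^\bot$ precisely when $S\in\Scal^c$. The second step is to translate the containment $E(S)\in {}^\bot\Scal$: again from the preliminary observation, this holds iff $\supp(\top E(S))\subseteq\Scal^c$, which by definition is exactly $\Phi_-(\{S\})\subseteq\Scal^c$. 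Since injective envelopes of finite direct sums split, $\Phi_-$ preserves unions, so the condition "$\Phi_-(\{S\})\subseteq\Scal^c$ for every $S\in\Scal^c$" is equivalent to $\Phi_-(\Scal^c)\subseteq\Scal^c$. This proves (1).

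For (2), I would argue dually: apply Proposition~\ref{prop:ttf}(2) to $\Tcal=\filt\Scal$, reducing derived equivalence to ${}^\bot\Scal\cap\Proj(\Lambda)\subseteq\Scal^\bot$. The indecomposable projectives are the $P(S)$ with $\top P(S)=S$, so ${}^\bot\Scal\cap\Proj(\Lambda)$ consists of direct summands of $\bigoplus_{S\in\Scal^c}P(S)$. The containment $P(S)\in\Scal^\bot$ unfolds to $\supp(\soc P(S))\subseteq\Scal^c$, which is exactly $\Phi_+(\{S\})\subseteq\Scal^c$. Again $\Phi_+$ preserves unions (projective covers of direct sums split), so this is equivalent to $\Phi_+(\Scal^c)\subseteq\Scal^c$.

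There is no real obstacle here: the proof is essentially a bookkeeping exercise, and the only subtle point is remembering to invoke the second (refined) part of Proposition~\ref{prop:ttf}, which lets us check the containment only on indecomposable injectives/projectives instead of on all of $\Scal^\bot$ or ${}^\bot\Scal$. This refinement is what makes the criterion genuinely combinatorial: it replaces orthogonality classes in $\mod(\Lambda)$ with a simple set-theoretic inclusion in $\simp\Lambda$.
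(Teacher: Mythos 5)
Your proposal is correct and follows the paper's own argument: both reduce to Proposition~\ref{prop:ttf} via the TTF-class $\filt\Scal$ and then translate the inclusion $\Scal^\bot\cap\Inj\subseteq{}^\bot\Scal$ (resp.\ its projective dual) into the condition on $\Phi_-$ (resp.\ $\Phi_+$). The only cosmetic difference is that you decompose into indecomposable injectives $E(S)$ and use that $\Phi_-$ preserves unions, whereas the paper works with an arbitrary injective $E$ via $\top E=\top E(\soc E)=\Phi_-(\supp(\soc E))$; these are the same computation.
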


\begin{proof}
	We prove item (1), the other being dual. By item (1) of
	Proposition~\ref{prop:ttf}, the pair $({}^\bot\Scal,\filt\Scal)$ gives derived
	equivalence if and only if we have an inclusion
	$\Scal^\bot\cap\Inj(\mod(\Lambda))\subseteq{}^\bot\Scal$.

	Let $E$ be an injective module.
	As mentioned, $E$ lies in $\Scal^\bot$ if and only if $\supp(\soc E)$
	is contained in $\Scal^c$, and it lies in ${}^\bot\Scal$ if and
	only if $\supp(\top E)=\supp(\top E(\soc E))=\Phi_-(\supp(\soc E))$ is contained
	in $\Scal^c$. Therefore, we have $\Phi_-(\Scal^c)\subseteq\Scal^c$ if and only
	if $\Scal^\bot\cap\Inj(\mod(\Lambda))\subseteq {}^\bot\Scal$.
\end{proof}

Combining with Example~\ref{ex:path-algebra}, we obtain the following
combinatorial criterion.

\begin{cor}\label{cor:path-algebra}
	Let $\Lambda=kQ/I$ be a path algebra with relations, $S\subseteq Q_0$ a set of
	vertices and $\Scal=\{\,S_i\mid i \in S\,\}$ the corresponding set of simples.
	Then:
	\begin{enumerate}
		\item the torsion pair $({}^\bot\Scal,\filt\Scal)$ induces derived
			equivalence if and only if every nonzero path ending outside $S$ can be prolonged to
			a nonzero path starting outside $S$, that is:
			\[\forall\,p\colon i\leadsto j\text{ with }j\in S^c\text{ and }p\notin
			I\quad
			\exists\, q\colon j'\leadsto i\text{ such that }j'\in S^c\text{ and }qp\notin
			I.\]
		\item the torsion pair  $(\filt\Scal,\Scal^\bot)$ induces derived
			equivalence if and only if every nonzero path starting outside $S$ can be prolonged to
			a nonzero path ending outside $S$, that is:
			\[\forall\,p\colon j\leadsto i\text{ with }j\in S^c\text{ and }p\notin
			I\quad
			\exists\, q\colon i\leadsto j'\text{ such that }j'\in S^c\text{ and }pq\notin
			I.\]
	\end{enumerate}
\end{cor}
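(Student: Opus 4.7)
The plan is to combine Proposition~\ref{prop:phi-criterion} with the combinatorial description of $\Phi_\pm$ from Example~\ref{ex:path-algebra}, and then translate the resulting condition into the prolongation statement. I will prove (1) in detail, obtaining (2) by applying the same reasoning to $\Lambda^{\op} = kQ^{\op}/I^{\op}$, under which head-maximal paths in $Q$ become tail-maximal paths in $Q^{\op}$ and the roles of $\Phi_+$ and $\Phi_-$ swap. For (1), Proposition~\ref{prop:phi-criterion}(1) reduces the claim to $\Phi_-(\Scal^c) \subseteq \Scal^c$; since $\Phi_-$ preserves unions, this unpacks via Example~\ref{ex:path-algebra} as
\[(\dagger)\quad \text{for every head-maximal path } q\colon j' \leadsto i \text{ with } i \in S^c,\ j' \in S^c,\]
and the remaining task is to show that $(\dagger)$ is equivalent to the prolongation condition $(\ddagger)$ of the corollary.

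The direction $(\ddagger) \Rightarrow (\dagger)$ is immediate: a head-maximal $q\colon j' \leadsto i$ with $i \in S^c$ is itself a nonzero path ending outside $S$, so $(\ddagger)$ produces $q'\colon j'' \leadsto j'$ with $j'' \in S^c$ and $q'q \notin I$; head-maximality of $q$ forces $q'$ to be the trivial path $e_{j'}$, so $j'' = j' \in S^c$.

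For $(\dagger) \Rightarrow (\ddagger)$ I expect the main point of the proof to be a socle computation. Given a nonzero path $p\colon i \leadsto j$ with $j \in S^c$, consider the cyclic left submodule $\Lambda p \subseteq \Lambda e_j$: it is finite-dimensional and nonzero, hence has nonzero socle, which embeds into $\soc(\Lambda e_j)$. Via the standard artinian duality $\soc(\Lambda e_j) \cong D(\top E(S_j))$ combined with Example~\ref{ex:path-algebra}, every simple summand of $\soc(\Lambda e_j)$ is some $S_{j''}$ with $j'' \in \Phi_-(\{j\})$, and $(\dagger)$ places each such $j''$ in $S^c$. Hence $\soc(\Lambda p)$ admits a summand $S_{j'}$ with $j' \in S^c$, so $e_{j'}\Lambda p \neq 0$. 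Writing any nonzero element of $e_{j'}\Lambda p$ as a $k$-linear combination $\sum_k c_k\, q_k p$ with paths $q_k\colon j' \leadsto i$, the fact that $I$ is closed under $k$-linear combinations rules out that every $q_k p$ lies in $I$; hence some $q = q_k$ satisfies $qp \notin I$, which is $(\ddagger)$.
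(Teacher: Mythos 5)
Your proposal is correct and follows the paper's (implicit) route: the corollary is stated there as an immediate combination of Proposition~\ref{prop:phi-criterion} with the description of $\Phi_\pm$ in Example~\ref{ex:path-algebra}, which is exactly your plan, including the reduction of (2) to (1) via the opposite quiver. The only place you add substance is the translation of $\Phi_-(\Scal^c)\subseteq\Scal^c$ into the prolongation condition, which the paper leaves unstated; your socle argument for the cyclic submodule $\Lambda p\subseteq\Lambda e_j$ is a valid (if slightly heavier) substitute for the direct observation that, since $I$ is admissible, any nonzero path can be prolonged at its head to a head-maximal path, whose source lies in $S^c$ by hypothesis.
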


\begin{ex}
	In Figure~\ref{fig:phi+}, the third set considered is closed under $\Phi_+$,
	the others are not. Therefore, the hereditary torsion pairs supported on the
	complement of these sets give derived equivalence in the third case but not in
	the first two.
\end{ex}

\begin{ex}
	Let $\Lambda=kQ$ be the path algebra of an acyclic quiver, without relations. Let $S\subseteq Q_0$ be
	a set of vertices, and $\Scal$ the corresponding set of simple modules, as
	above. Then:
	\begin{enumerate}
		\item the cohereditary torsion pair $({}^\bot\Scal,\filt\Scal)$ induces
			derived equivalence if and only if whenever $S$ contains a source $i$, it
			also contains every vertex $j$ with a path $i\leadsto j$;
		\item the hereditary torsion pair $(\filt\Scal,\Scal^\bot)$ induces
			derived equivalence if and only if whenever $S$ contains a sink $j$, it
			also contains every vertex $i$ with a path $i\leadsto j$.
	\end{enumerate}
	This allows to easily compute the number of (co)hereditary torsion pairs
	inducing derived equivalence for the path algebra of a given acyclic quiver
	$Q$. For example, let $Q$ be a quiver with $n$ vertices whose underlying
	graph has a star shape:
	\[\begin{tikzcd}[sep=small,cramped]
		\bullet \ar[-]{dr} && \bullet \\
		\cdots & \circ \ar[-]{ur} \ar[-]{dr} & \cdots \\
		\bullet \ar[-]{ur} && \bullet
	\end{tikzcd}\]
	Let $1\leq k\leq n-1$ be the number of sinks. Since every non-sink has a path
	to every sink, out of the $2^n$ hereditary torsion pairs there are
	$2^{n-k}+(2^k-1)$ inducing derived equivalence (one for every set not
	containing any sink, plus one for any nonempty set of sinks). As a function of
	$k$, this has a symmetry, reaching a maximum of $2^{n-1}+1$ when either $k=1$ or
	$k=n-1$, and a minimum when the number of sinks is closest to half the number of
	vertices, with value $2^{\frac{n}2+1}-1$ for $n$ even and
	$3\cdot2^{\lfloor\frac{n}2\rfloor}-1$ for $n$ odd.

	We remark that from the Corollary it follows that it is possible to add and
	delete arrows from a quiver without affecting the number of hereditary torsion
	pairs inducing derived equivalence, as long as this operation does not change
	the fact that there is or is not a path from a given vertex to a given sink.
	For example, if in $Q$ all paths to every sink go through a given vertex
	$\circ$, it is possible to reduce $Q$ to a star-shaped graph as above, by
	making every non-sink other than $\circ$ into a source with an arrow to
	$\circ$. To illustrate this, the following two quivers have the same number of
	hereditary torsion pairs giving derived equivalence:
	\[
		\begin{tikzcd}[sep=small,cramped]
		& & & & \bullet \\
		\bullet \ar{r} & \cdots \ar{r} & \bullet \ar{r} & \circ \ar{ur}\ar{dr}\\
		& & & & \bullet
	\end{tikzcd}
		\hspace{5em}
	\begin{tikzcd}[sep=small,cramped]
		\bullet \ar{dr} & & \bullet \\
		\cdots \ar{r} & \circ \ar{ur}\ar{dr} & \phantom{\cdots}\\
		\bullet \ar{ur} & & \bullet
	\end{tikzcd}
\]

	The previous calculation for star-shaped graphs covers all the
	possible orientations of the Dynkin diagram $\Abb_3$, yielding that for all the
	corresponding algebras, $5$ out of the $8$ hereditary torsion pairs induce
	derived equivalence. For $\Abb_4$, however, this number depends on the
	orientation, as it is $8$ out of $16$ for the two orientations with alternating
	arrows (which cannot be reduced to the star-shaped case), and $9$ out of $16$
	for the others. The quiver $\Dbb_4$ reaches an even lower number, with all
	orientations having $9$ out of $16$ hereditary
	torsion pairs giving derived equivalence, except for the orientation with one
	source and two sinks, which has $7$, the minimum we have computed for
	a star-shaped quiver with $4$ vertices.
\end{ex}

\begin{ex}
	Let $\Lambda=kQ/I$, for $I=\rad^2(kQ)$, be a radical-square-zero algebra. Then, in the
	notation of Corollary~\ref{cor:path-algebra}(2), we will never have $pq\notin
	I$, by construction. Therefore, the hereditary torsion pair corresponding to a
	set $S$ of vertices induces derived equivalence if and only if whenever there
	is a path $p\colon j\leadsto i$ with $i\in S$, we also have $j\in S$. Indeed, $S$ must
	be closed under predecessors along single arrows (which do not belong to $I$),
	and we conclude by induction on the length of $p$.

	For example, when $Q$ is the linearly oriented $\Abb_n$ quiver, there are
	exactly $n+1$ hereditary torsion pairs in $\mod(k\Abb_n/\rad^2)$ inducing
	derived equivalence (one for each initial set of vertices, including the empty
	set).
\end{ex}

Using Example~\ref{ex:self-injective}, we also obtain a criterion for
self-injective algebras. Observe that the part about weakly symmetric algebras also
follows from the more general fact that if $\Lambda$ is weakly symmetric, all
functorially finite torsion pairs in $\mod(\Lambda)$ (which includes the
(co)hereditary ones) induce derived equivalence by
\cite[Prop.~3.6]{augu-duga-21}. The part about the case of a transitive Nakayama
permutation can also be deduced from (the proof of)
\cite[Cor.~5.8]{xi-zhang-25}.

\begin{cor}
	Let $\Lambda$ be a self-injective artin algebra with Nakayama permutation
	$\nu$, and $\Scal\subseteq\simp\Lambda$ a set of simples. Then each of
	the torsion pairs $({}^\bot\Scal,\filt\Scal)$, $(\filt\Scal,\Scal^\bot)$ 
	$({}^\bot\Scal^c,\filt\Scal^c)$ and
	$(\filt\Scal^c,(\Scal^c)^\bot)$ induces derived equivalence if and only if $\Scal$
	is closed under $\nu$. In particular, this is always the case if $\Lambda$ is
	weakly symmetric, and never the case if $\nu$ is transitive and $\emptyset\neq
	\Scal\subsetneq\simp\Lambda$.
\end{cor}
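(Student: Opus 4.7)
The plan is to reduce the entire statement to direct applications of Proposition~\ref{prop:phi-criterion}, substituting the maps $\Phi_-$ and $\Phi_+$ using Example~\ref{ex:self-injective}, which for a self-injective algebra identifies them with $\nu^{-1}_\ast$ and $\nu_\ast$ respectively. Once this substitution is in place, the four torsion pairs listed all give rise to the \emph{same} closure condition, after one observes that $\nu$ is a bijection on a finite set.

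Concretely, I would argue as follows. For $({}^\bot\Scal,\filt\Scal)$, Proposition~\ref{prop:phi-criterion}(1) gives derived equivalence if and only if $\Phi_-(\Scal^c)\subseteq\Scal^c$, which by Example~\ref{ex:self-injective} becomes $\nu^{-1}(\Scal^c)\subseteq \Scal^c$. For $(\filt\Scal,\Scal^\bot)$, part (2) gives $\Phi_+(\Scal^c)\subseteq\Scal^c$, i.e.\ $\nu(\Scal^c)\subseteq\Scal^c$. The remaining two torsion pairs are obtained by applying the same proposition with $\Scal$ replaced by $\Scal^c$, yielding the conditions $\nu^{-1}(\Scal)\subseteq\Scal$ and $\nu(\Scal)\subseteq\Scal$.

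The key (and essentially only) observation is that, since $\nu$ is a permutation of the finite set $\simp\Lambda$, the four conditions
\[
\nu(\Scal)\subseteq\Scal,\quad \nu^{-1}(\Scal)\subseteq\Scal,\quad
\nu(\Scal^c)\subseteq\Scal^c,\quad \nu^{-1}(\Scal^c)\subseteq\Scal^c
\]
are pairwise equivalent: each of them says that $\Scal$ (equivalently $\Scal^c$) is a union of $\nu$-orbits, which is the meaning of being closed under $\nu$. This collapses the four criteria to the single condition in the statement.

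For the two special cases, if $\Lambda$ is weakly symmetric then $\nu=\mathrm{id}$, so every $\Scal\subseteq\simp\Lambda$ is trivially closed under $\nu$. If instead $\nu$ acts transitively on $\simp\Lambda$, the only $\nu$-invariant subsets are $\emptyset$ and $\simp\Lambda$, so no proper nonempty $\Scal$ can be $\nu$-closed. I expect no real obstacle: the only step requiring care is the bookkeeping that confirms the four listed torsion pairs genuinely collapse to the single condition, which amounts to taking complements and using that $\nu$ is a bijection.
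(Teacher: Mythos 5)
Your proposal is correct and matches the paper's proof essentially verbatim: both reduce each of the four torsion pairs to the criterion of Proposition~\ref{prop:phi-criterion} via Example~\ref{ex:self-injective}, and both use the fact that for a permutation of the finite set $\simp\Lambda$ the four closure conditions on $\Scal$ and $\Scal^c$ under $\nu$ and $\nu^{-1}$ coincide. Your explicit treatment of the weakly symmetric and transitive cases is also exactly the intended reading of the paper's ``in particular'' claim.
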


\begin{proof}
	Observe that since $\nu$ is a permutation of a finite set $\simp\Lambda$, the
	conditions
	that $\Scal^c$ is closed under $\nu^{-1}$, that $\Scal$
	is closed under $\nu$, that $\Scal$ is closed under $\nu^{-1}$ and that
	$\Scal^c$ is closed under $\nu$ are all equivalent. By
	Example~\ref{ex:self-injective}, they mean that $\Scal^c$ and $\Scal$ are
	closed under $\Phi_-$ and $\Phi_+$, and we conclude by
	Proposition~\ref{prop:phi-criterion}.
\end{proof}

\begin{ex}\label{ex:self-injective-two-vertices}
	Consider the quiver $Q=(\begin{tikzcd}[sep=small,cramped]\bullet\ar[bend
	left]{r}&\ar[bend left]{l}\bullet\end{tikzcd})$, and the self-injective algebras
	$\Lambda_n:=kQ/\rad^n$, for $n\geq 2$. Over these algebras there
	are always six torsion pair, which are all either hereditary or cohereditary; moreover,
	the lattice $\tors(\Lambda_n)$ is independent of $n$. The
	Nakayama permutation of $\Lambda_n$ exchanges the two simples, if $n$ is even,
	while it is the identity if $n$ is odd. Hence, drawing the
	Hasse quivers of $\tors(\Lambda_n)$ and highlighting in black the torsion
	pairs inducing derived equivalence, we obtain two possible pictures:
	\[
		\arraycolsep=2pc
		\begin{array}{cc}
			k(\begin{tikzcd}[sep=small,cramped]\bullet\ar[bend left]{r}&\ar[bend
			left]{l}\bullet\end{tikzcd})/\rad^{2k} &
			k(\begin{tikzcd}[sep=small,cramped]\bullet\ar[bend left]{r}&\ar[bend
			left]{l}\bullet\end{tikzcd})/\rad^{2k+1} \\[10pt]
		\begin{tikzcd}[row sep=0,column sep=1pc]
			& \bullet \ar[-]{dl}\ar[-]{dr} \\
			\circ \ar[-]{dd} & & \circ \ar[-]{dd} \\
			\phantom{\bullet}\\
			\circ \ar[-]{dr} & & \circ \ar[-]{dl} \\
			& \bullet
		\end{tikzcd}
			&
		\begin{tikzcd}[row sep=0,column sep=1pc]
			& \bullet \ar[-]{dl}\ar[-]{dr} \\
			\bullet \ar[-]{dd} & & \bullet \ar[-]{dd} \\
			\phantom{\bullet}\\
			\bullet \ar[-]{dr} & & \bullet \ar[-]{dl} \\
			& \bullet
		\end{tikzcd}
		\end{array}
	\]
\end{ex}

\subsection{Fourth application: 2-tilting connectedness}
\label{subsec:2tilting}

For this application, let $\Lambda$ be a finite-dimensional algebra over a
field $k$.

Recall that a torsion pair $(\Tcal,\Fcal)$ in $\mod(\Lambda)$ is
\emph{functorially finite} if $\Tcal$ is an enveloping class (equivalently,
if $\Fcal$ is a covering class), see \cite[Prop.~1.1]{adac-iyam-reit-14} and
references therein. This is also equivalent to the fact
that $\Tcal=\gen M$, for a \emph{support $\tau$-tilting} module $M$
\cite[Thm.~2.7]{adac-iyam-reit-14}.
Denoting by $\ftors(\Lambda)\subseteq\tors(\Lambda)$ the set of functorially
finite torsion pairs of $\mod(\Lambda)$, we recall the following remarkable
result of Demonet, Iyama and Jasso.

\begin{thm}[{\cite[Thm.~3.8]{demo-iyam-jass-19}}]\label{thm:tau-tilting-finite}
	For a finite-dimensional algebra $\Lambda$, the following are equivalent:
	\begin{enumerate}
		\item $\ftors(\Lambda)$ is finite;
		\item $\ftors(\Lambda)=\tors(\Lambda)$.
	\end{enumerate}
	In this case we say that $\Lambda$ is \emph{$\tau$-tilting finite}.
\end{thm}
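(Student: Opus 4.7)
The plan rests on the Adachi--Iyama--Reiten bijection between $\ftors(\Lambda)$ and (isomorphism classes of) support $\tau$-tilting $\Lambda$-modules, together with the brick labelling of the Hasse quiver of $\tors(\Lambda)$. Recall that a \emph{brick} is a finitely generated $\Lambda$-module $B$ with $\End_\Lambda(B)=k$, and the lattice $\tors(\Lambda)$ is a complete lattice whose covering relations are in bijection with bricks.

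For the direction $(2)\Rightarrow(1)$, I would argue by contradiction: assume $\tors(\Lambda)=\ftors(\Lambda)$ is infinite. Since $\tors(\Lambda)$ is a complete lattice, one can extract a strictly ascending chain $\tau_1<\tau_2<\cdots$ in $\ftors(\Lambda)$ and consider $\tau_\infty:=\bigvee_i \tau_i$. By hypothesis $\tau_\infty$ is functorially finite, so $\Tcal_\infty=\gen M$ for a support $\tau$-tilting module $M$. Since $M$ has only finitely many indecomposable summands, and each summand --- being an Ext-projective of $\Tcal_\infty$ --- should already appear in $\Tcal_N$ for $N\gg 0$, one would conclude $\Tcal_\infty\subseteq\gen M\subseteq\Tcal_N$, contradicting strictness of the chain.

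For the direction $(1)\Rightarrow(2)$, assume $\ftors(\Lambda)$ is finite and let $\tau\in\tors(\Lambda)$. Using that $\ftors(\Lambda)$ is closed under meets and joins in the complete lattice $\tors(\Lambda)$, I would define $\tau^+$ (resp.\ $\tau^-$) to be the smallest (resp.\ largest) element of $\ftors(\Lambda)$ containing (resp.\ contained in) $\tau$; both exist by finiteness, and the problem reduces to $\tau^-=\tau^+$. Equivalently, every covering relation $\tau^-\lessdot\tau^+$ in $\ftors(\Lambda)$ should also be a covering relation in $\tors(\Lambda)$. Here I would invoke the brick labelling: such a cover in $\ftors(\Lambda)$ carries a brick label $B$, and the interval $[\tau^-,\tau^+]$ in $\tors(\Lambda)$ can be shown to be isomorphic to the lattice of torsion pairs in the abelian category $\filt B$. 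Since $B$ is a brick, this lattice has only the two trivial elements, forcing $\tau\in\{\tau^-,\tau^+\}$.

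The main obstacle lies in both structural claims, which together form the essential content of the Demonet--Iyama--Jasso theorem: in $(2)\Rightarrow(1)$, the propagation of the Ext-projectives of a limit torsion class down an ascending chain; in $(1)\Rightarrow(2)$, the identification of an interval in $\tors(\Lambda)$ with the torsion pairs in a wide subcategory attached to a brick label. Both rest on a brick-theoretic refinement of the AIR correspondence (notably Asai's work on semibricks and subsequent lattice-theoretic developments), without which one cannot bridge the gap between the behaviour of functorially finite torsion pairs and that of arbitrary ones.
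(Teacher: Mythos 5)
The paper does not actually prove this statement: it is quoted verbatim from Demonet--Iyama--Jasso \cite{demo-iyam-jass-19} as background, so there is no internal argument to compare yours against, and in the context of the paper a citation is all that is expected. Judged as a standalone proof, however, your sketch has two concrete gaps beyond the ones you flag. For $(2)\Rightarrow(1)$: the extraction of an infinite strictly ascending chain is not justified by completeness of the lattice --- an infinite antichain with a top and bottom adjoined is an infinite complete lattice in which every chain has length at most three --- so you need an argument specific to $\tors(\Lambda)$, e.g.\ the fact that under hypothesis (2) every torsion class is $\gen$ of a support $\tau$-tilting module, so each vertex of the Hasse quiver has only finitely many (co)covers, combined with a pigeonhole/K\"onig-type argument, or directly the key lemma of \cite{demo-iyam-jass-19} recalled below. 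Once a chain is granted, your argument is fine and is essentially the standard one: the join of a chain of torsion classes is the union, so a finitely generated $M$ with $\Tcal_\infty=\gen M$ lies in some $\Tcal_N$, a contradiction (the detour through Ext-projectivity of the summands is unnecessary).

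For $(1)\Rightarrow(2)$ the gaps are more serious. The existence of $\tau^+$ and $\tau^-$ presupposes that $\ftors(\Lambda)$ is closed under meets and joins inside $\tors(\Lambda)$; this is not known at this point of the argument and is false in general (functorially finite torsion classes need not even form a lattice), so invoking it when $\ftors(\Lambda)$ is finite is close to assuming what is to be proved. Moreover, even granting $\tau^-\subseteq\tau\subseteq\tau^+$, the relation $\tau^-\lessdot\tau^+$ in $\ftors(\Lambda)$ does not follow: a functorially finite class strictly between them is merely forced to be incomparable with $\tau$, so your reduction to a brick-labelled cover (whose label is anyway defined for covers in $\tors(\Lambda)$, not in $\ftors(\Lambda)$) does not go through as stated. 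The established route is the cover lemma of Demonet--Iyama--Jasso: for any torsion class $\Tcal$ and any functorially finite $\Ucal\subsetneq\Tcal$ there exists a functorially finite $\Ucal'$ with $\Ucal\lessdot\Ucal'\subseteq\Tcal$; with $\ftors(\Lambda)$ finite one climbs from $\mathbf{0}$ to $\Tcal$ in finitely many steps, proving $\Tcal$ functorially finite. You candidly note that the structural claims you use constitute the essential content of the theorem; that is accurate, but it means the proposal is an outline deferring the core difficulty to the literature rather than a proof.
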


Functorially finite torsion pairs are also in bijection with \emph{two-term
silting complexes} \cite[Thms.~2.7 and~3.2]{adac-iyam-reit-14}. This bijection
preserves a lot of information, of which we give two examples, to motivate the
results of this subsection. First, on silting complexes there is an operation of
\emph{irreducible mutation} \cite{aiha-iyam-12}, and two two-term silting
complexes are linked by irreducible mutation if and only if there is a minimal
inclusion between the corresponding torsion pairs. Second, among the two-term
silting complexes there are the two-term \emph{tilting} complexes, which
correspond to the functorially finite torsion pairs inducing derived equivalence
\cite[Prop.~5.1]{psar-vito-18}.

For a functorially finite torsion pair $\tau=(\Tcal,\Fcal)\in\ftors(\Lambda)$ in
$\mod(\Lambda)$, denote by $\Hcal_\tau$ the corresponding HRS-tilt of
$\mod(\Lambda)$ in $\D^b(\Lambda)$, and by $T$ the associated two-term silting
complex, as above. Then $\Hcal_\tau$ is not only an abelian
category, but it is equivalent to the category $\mod(B)$ of modules over the
endomorphism algebra $B:=\End(T)$ \cite[Lemma~5.3]{koen-yang-14}.

Now consider the following examples. These
two algebras have finite representation type and have few indecomposable
representations, up to isomorphism, which allows to easily compute the lattices of torsion
pairs. The subset of pairs inducing derived equivalence can be determined using
Theorem~\ref{thm:chz} (see also Example~\ref{ex:self-injective-two-vertices} for the second algebra).

\begin{ex}
	For two path algebras with relations, we draw the Hasse quiver of the lattice
	$\tors(\Lambda)$ with the subset $\tors^d(\Lambda)$ highlighted in black.
	\[
		\arraycolsep=2pc
		\begin{array}{cc}
			k(\bullet\to\bullet) &
			k(\begin{tikzcd}[sep=small,cramped]\bullet\ar[bend left]{r}&\ar[bend
			left]{l}\bullet\end{tikzcd})/\rad^2 \\[10pt]
		\begin{tikzcd}[row sep=0,column sep=1pc]
			& \bullet \ar[-]{ddl}\ar[-]{dr} \\
			& & \bullet \ar[-]{dd} \\
			\circ \ar[-]{ddr} \\
			& & \bullet \ar[-]{dl} \\
			& \bullet
		\end{tikzcd}
			&
		\begin{tikzcd}[row sep=0,column sep=1pc]
			& \bullet \ar[-]{dl}\ar[-]{dr} \\
			\circ \ar[-]{dd} & & \circ \ar[-]{dd} \\
			\phantom{\bullet}\\
			\circ \ar[-]{dr} & & \circ \ar[-]{dl} \\
			& \bullet
		\end{tikzcd}
		\end{array}
	\]
\end{ex}

These two examples exhibit different behaviours: in the first,
$\tors^d(\Lambda)$ is connected in $\tors(\Lambda)$, in the second it is not.
More precisely, in the first case $\tors^d(\Lambda)$ is a union of maximal
chains in $\tors(\Lambda)$. In terms of two-term silting and tilting complexes,
in the first case irreducible silting mutation acts transitively on two-term tilting
complexes, while in the other it does not. In this subsection we give a
sufficient condition to ensure the first kind of behaviour.

Say that an algebra $\Lambda$ is \emph{acyclic} if there are no cyclic chains of
nonzero noninvertible morphisms $P_1\to P_2\to \cdots \to P_n\to P_1$ between indecomposable
projectives (or equivalently, injectives). The following lemma says that if
$\Lambda$ is acyclic, it is possible to make a first step from $\mathbf{0}$ and from
$\mathbf{1}$ in the direction of any element of $\tors^d\Lambda$, without
leaving this subset.

\begin{lemma}\label{lemma:acyclic}
	Let $\Lambda$ be an artin algebra, and assume that it is acyclic. Then for
	every torsion pair $(\Tcal,\Fcal)\in\tors^d\Lambda$, there exist simple
	modules $S_t\in\Tcal$ and $S_f\in\Fcal$ such that $(\filt S_t,S_t^\bot)\leq
	(\Tcal,\Fcal)\leq({}^\bot S_f,\filt S_f)$ belong to $\tors^d\Lambda$.
\end{lemma}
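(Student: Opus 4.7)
The plan is to reduce the lemma to a combinatorial statement about an auxiliary quiver on the simple $\Lambda$-modules and then exploit acyclicity.

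First I would translate the DE condition for the atomic torsion pair $(\filt S_t, S_t^\bot)$ into a combinatorial condition on $S_t$. Since $\filt S_t$ is a TTF-class for any simple $S_t$ (closed under subobjects, quotients, extensions and finite products), Proposition~\ref{prop:ttf}(2) gives $(\filt S_t, S_t^\bot)\in\tors^d$ iff ${}^\bot S_t \cap \Proj \subseteq S_t^\bot$, which unpacks to: for every indecomposable projective $P$ with $\top P \neq S_t$, $S_t \notin \soc P$. Similarly, the inclusion $(\filt S_t, S_t^\bot)\leq (\Tcal, \Fcal)$ reduces to $S_t \in \Tcal$, since $\filt S_t \subseteq \Tcal$ by closure under extensions.

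Next I introduce the \emph{socle quiver} $\Gamma$ on vertex set $\simp\Lambda$, with an arrow $S \to S'$ whenever $S \neq S'$ and $S \in \soc P_{S'}$. Each such arrow gives rise to a nonzero noninvertible morphism $P_S \twoheadrightarrow S \hookrightarrow P_{S'}$ between distinct indecomposable projectives (noninvertible because the image has length one, while $P_{S'}$ has length at least two as soon as $S \neq S'$ lies in $\soc P_{S'}$). A directed cycle in $\Gamma$ would therefore produce a cyclic chain of nonzero noninvertible morphisms between indecomposable projectives, contradicting acyclicity of $\Lambda$. Hence $\Gamma$ is acyclic, and by the translation above its sinks are exactly the simples $S$ for which $(\filt S, S^\bot)$ induces derived equivalence.

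The heart of the proof is then to show that $\Tcal$ must contain a sink of $\Gamma$. Let $\Sigma = \Tcal \cap \simp\Lambda$; since $\Tcal \neq 0$, $\Sigma \neq \emptyset$. The restricted quiver $\Gamma|_\Sigma$ is acyclic and finite, so it admits a sink in $\Sigma$. If that sink is also a sink of the ambient $\Gamma$, we are done. Otherwise some $S \in \Sigma$ is a sink of $\Gamma|_\Sigma$ that admits an outgoing arrow to some $S' \in \simp\Lambda \cap \Fcal$, i.e.\ $S \hookrightarrow P_{S'}$ with $S'\in\Fcal$. I claim this configuration contradicts $\tau \in \tors^d$: first, $P_{S'} \notin \Fcal$, since otherwise the nonzero inclusion $S\hookrightarrow P_{S'}$ would violate $\Acal(\Tcal,\Fcal)=0$; and $P_{S'}\notin\Tcal$, since otherwise $S'=\top P_{S'}\in\Tcal$ would contradict $S'\in\Fcal$. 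So $P_{S'}$ has a nontrivial torsion sequence, with torsion part containing $S$. The task is then to apply the CHZ criterion (Theorem~\ref{thm:chz}) to $a = P_{S'}$, exploiting projectivity of $P_{S'}$ (to split off the left half of the five-term sequence) together with Corollary~\ref{cor:necessary} (specifically ${}^\bot\Tcal\cap\Proj\subseteq\Fcal$) and the strong form of acyclicity (each $\End(P_S)$ is a division ring, hence $[P_S:\top P_S]=1$), to derive a contradiction. Once $S_t$ is found, $\filt S_t\subseteq\Tcal$ and $(\filt S_t,S_t^\bot)\in\tors^d$, as required.

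The argument for $S_f$ is strictly dual: one introduces the \emph{top-of-injective} quiver $\Gamma^*$ with arrow $S\to S'$ iff $S\neq S'$ and $S' = \top E_S$, which is acyclic by the equivalent formulation of acyclicity in terms of injectives (the Nakayama duality exchanges the two). Applying Proposition~\ref{prop:ttf}(1) to $({}^\bot S_f, \filt S_f)$ and finding a sink of $\Gamma^*$ inside $\Fcal$-simples produces $S_f$ with the required properties. The main obstacle throughout is the step of showing that, under DE, an arrow in $\Gamma$ out of $\Tcal$ cannot land in $\Fcal$ when the source is a sink inside $\Gamma|_\Sigma$; the naive approach via Corollary~\ref{cor:necessary} alone does not suffice, since $S'\in\Fcal$ does not force $P_{S'}\in{}^\bot\Tcal$ (it fails precisely when $S'$ occurs as a composition factor of some $T\in\Tcal$), so one must use the CHZ criterion directly.
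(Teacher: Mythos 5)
Your setup is fine: translating ``$(\filt S,S^\bot)$ induces derived equivalence'' into ``$S$ does not occur in $\soc P(U)$ for any simple $U\neq S$'' via Proposition~\ref{prop:ttf}(2) is correct, your socle quiver $\Gamma$ is acyclic whenever $\Lambda$ is, and its sinks are exactly the simples whose hereditary torsion pair gives derived equivalence. The problem is the step you yourself flag as ``the task'': you never prove that a sink $S$ of $\Gamma|_\Sigma$ with an arrow into an $\Fcal$-simple contradicts $(\Tcal,\Fcal)\in\tors^d\Lambda$. As written, that claim is only asserted, with a promissory note that CHZ applied to $P_{S'}$ ``together with'' projectivity, Corollary~\ref{cor:necessary} and $[P_S:\top P_S]=1$ will yield a contradiction; your closing sentence even concedes that the naive route via Corollary~\ref{cor:necessary} does not work. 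So the heart of the lemma is missing. Note also that what you are trying to prove is genuinely \emph{stronger} than what is needed: you want a local statement (a single sink of $\Gamma|_\Sigma$ cannot have all its arrows landing in $\Fcal$), whereas the lemma only needs the global statement that not every simple of $\Tcal$ has an outgoing $\Gamma$-arrow. The configuration you want to exclude is easy to rule out for hereditary and cohereditary pairs, but in general it is not at all clear it is incompatible with derived equivalence, and none of the tools you list obviously excludes it; DE only gives you $P_{S'}\notin\Fcal$, hence (by Corollary~\ref{cor:necessary}) a nonzero map $P_{S'}\to P_T$ for some simple $T\in\Tcal$, and there the local argument stalls, because $T$ may well be a $\Gamma$-sink and nothing cycles back.

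The paper avoids this local claim entirely by making the contradiction global. Assume \emph{every} simple $S_i\in\Tcal$ fails, i.e.\ has an outgoing $\Gamma$-arrow: there is an indecomposable projective $P_i'$ with $S_i\subseteq\soc P_i'$ and $\top P_i'\neq S_i$, giving a nonzero noninvertible map $P_i\to P_i'$. Since $P_i'\notin\Fcal$ and $\tau$ is DE, Corollary~\ref{cor:necessary} gives a nonzero map $P_i'\to T$ with $T\in\Tcal$, which factors through the projective cover of $\top T$, hence a nonzero map $P_i'\to P_{\sigma(i)}$ with $S_{\sigma(i)}$ again a simple \emph{in} $\Tcal$ (no assumption that the top of $P_i'$ lies in $\Fcal$ is ever used, and no sink of $\Gamma|_\Sigma$ is singled out). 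Iterating $\sigma$ over the finitely many simples of $\Tcal$ closes the chain $P_i\to P_i'\to P_{\sigma(i)}\to P_{\sigma(i)}'\to\cdots$ into a cycle of nonzero noninvertible morphisms between indecomposable projectives, contradicting acyclicity. If you want to salvage your approach, you should either prove your local claim (and be aware it may be false as stated) or replace the ``pick one sink'' reduction by this iteration, which is exactly where acyclicity is actually used.
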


\begin{proof}
	We prove only the existence of $S_t$, as for $S_f$ the argument is
	similar.
	We are assuming that $(\Tcal,\Fcal)$ induces derived equivalence, so by
	Corollary~\ref{cor:necessary} we have that
	${}^\bot\Tcal\cap\proj\Lambda\subseteq \Fcal$. Now, assume by contradiction
	that for every simple $S\in\Tcal$, the hereditary torsion pair $(\filt
	S,S^\bot)$ does not induce derived equivalence. For every choice of a simple
	$S_i\in\Tcal$ the corresponding torsion pair must then fail the criterion of
	Proposition~\ref{prop:ttf}(2), so there must be an indecomposable
	projective module $P_i'$ such that $P_i'\in {}^\bot\filt S_i$ but $P_i'\notin
	S_i^\bot$. This in particular means that there is a nonzero morphism $S_i\to
	P_i'$, and therefore a nonzero morphism $P_i\to P_i'$ from the projective cover of
	$S_i$. Moreover, this morphism is not an isomorphism, as $P_i'\in{}^\bot\filt S_i$.

	Now, by construction $\Fcal\subseteq S_i^\bot$, and therefore $P_i'$ does not
	belong to $\Fcal$. Since we are assuming that $(\Tcal,\Fcal)$ induces derived
	equivalence, by the necessary condition of Corollary~\ref{cor:necessary} we
	must have that $P_i'$ does not belong to ${}^\bot\Tcal$ either. Hence there
	must be a nonzero morphism $P_i'\to T$, for $T\in\Tcal$. This morphism factors
	through the projective cover of $T$, which is the direct sum of the projective
	covers of the simple factors of $\top T\in\Tcal$. Therefore, we obtain at
	least one nonzero morphism $P_i'\to P_{\sigma(i)}$, where $P_{\sigma(i)}$ is
	the projective cover of one of the simples $S_{\sigma(i)}\in \Tcal$ appearing
	in $\top T$. This procedure defines a function $\sigma$ on the set of simple
	modules in $\Tcal$, and
	a chain of nonzero morphisms, of which at least those of the form
	$P_i\to P_i'$ are not isomorphisms:
	\[P_1\to P_1'\to P_{\sigma(1)}\to P_{\sigma(1)}'\to P_{\sigma^2(i)}\to \cdots\]
	Since $\Tcal$ only contains finitely many simples, we must have
	$\sigma^d(i)=i$ for some $i$ and $d\geq 0$, which yields a cycle of nonzero
	noninvertible morphisms between projectives. This is a contradiction.
\end{proof}

Our goal is to use this lemma as an inductive step to build a chain of torsion
pairs giving derived equivalence. To this end,
let us give the following definition.

\begin{dfn}
	An artin algebra $\Lambda$ is \emph{2-tilting acyclic} if the endomorphism
	algebra of any two-term tilting complex over $\Lambda$ is acyclic.
\end{dfn}

We first a couple of examples of algebras of this kind, and then proceed with
our Proposition.

\begin{ex}
	Recall that an algebra is \emph{piecewise-hereditary} if it is
	derived-equivalent to a hereditary algebra. Being piecewise-hereditary is
	obviously a derived-invariant property, and it implies being acyclic
	\cite[Lemma~IV.1.10]{happ-88}. 
	Therefore every piecewise-hereditary algebra is 2-tilting acyclic.
\end{ex}

\begin{ex}
	Let $\Lambda=k\Abb_n/I$ be an acyclic Nakayama algebra.
	For small values of $n$, all algebras of this form are piecewise-hereditary.
	This, however, is decidedly not true in general: see \cite{foss-oppe-stai-24}
	for counterexamples, and \cite[Table~1]{happ-seid-10} for a classification of
	the piecewise-hereditary Nakayama algebras of the special form
	$\Lambda(n,r):=k\Abb_n/\rad^r$.

	In the next Proposition we show that in $\K:=\K^{[-1,0]}(\proj\Lambda)$ there are no
	cyclic chains of nonzero, noninvertible morphisms between indecomposable
	objects. In particular, this implies that $\Lambda$ is 2-tilting acyclic, as
	for every two-term tilting complex $T\in\D^b(\Lambda)$ with endomorphism
	algebra $B:=\End(T)$ we have $\proj B\simeq \add T\subseteq \K$.
\end{ex}

\begin{prop}
	For an algebra of the form $\Lambda=k\Abb_n/I$, there are no cyclic chains of
	nonzero, noninvertible morphisms between indecomposable objects of
	$\K:=\K^{[-1,0]}(\proj\Lambda)$.
\end{prop}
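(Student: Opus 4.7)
The plan is to classify the indecomposable objects of $\K:=\K^{[-1,0]}(\proj\Lambda)$, enumerate the nonzero morphisms between distinct indecomposables, and exhibit a total order on the (finite) set of indecomposables which is strictly increased by every nonzero noninvertible morphism; the absence of cyclic chains will follow immediately.

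First I would classify the indecomposables. Since $\Lambda=k\Abb_n/I$ is Nakayama with linearly ordered simples, the projectives $P_1,\dots,P_n$ are uniserial and $\Hom_\Lambda(P_j,P_i)$ has dimension at most one, spanned (when nonzero) by the map induced by the unique path $p_{ij}\colon i\leadsto j$ in $\Abb_n$, which represents a nonzero element of $\Lambda$ iff $p_{ij}\notin I$. A Krull--Schmidt argument then identifies the indecomposable objects of $\K$ as: the stalks $P_i[0]$ and the shifted stalks $P_j[1]$; and the two-term complexes $X_{ab}:=(P_b\xrightarrow{\pi_{ab}} P_a)$ for $1\leq a<b\leq n$ with $p_{ab}\notin I$.

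Next I would compute morphism spaces by the standard chain-map-modulo-null-homotopy procedure. Enumerating the nonzero morphisms between distinct indecomposables yields six families: (i)~$P_j[\epsilon]\to P_i[\epsilon]$ for $\epsilon\in\{0,1\}$ is nonzero iff $i<j$ and $p_{ij}\notin I$; (ii)~$P_c[0]\to X_{ab}$ iff $a\leq c<b$; (iii)~$X_{ab}\to P_c[1]$ iff $a<c\leq b$; (iv)~$X_{ab}\to P_c[0]$ iff $c\leq a$ and $p_{cb}\in I$; (v)~$P_c[1]\to X_{ab}$ iff $b\leq c$ and $p_{ac}\in I$; and (vi)~$X_{ab}\to X_{cd}$ is nonzero only when $c\leq a$, $d\leq b$, together with a combinatorial admissibility relation among the relevant paths. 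There are no nonzero morphisms in either direction between a stalk $P_i[0]$ and a shifted stalk $P_j[1]$.

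To conclude acyclicity, I would attach to each indecomposable $X$ a pair $(\alpha(X),\beta(X))$ with $\alpha(P_i[0])=i$, $\alpha(X_{ab})=a$, $\alpha(P_j[1])=n+1$, while $\beta(P_i[0])=0$, $\beta(X_{ab})=b$, $\beta(P_j[1])=j$, and show that a suitable total order refining the natural partial order on these pairs is strictly increased by every morphism in (i)--(vi). The hardest part will be families (iv) and (v), which interleave two-term complexes with stalks across both cohomological degrees in a manner controlled by the membership of subpaths in $I$; the key combinatorial fact enabling the argument is that the admissibility of $I$ forces subpaths of paths outside $I$ to remain outside $I$, and superpaths of paths inside $I$ to remain inside $I$. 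Once strict monotonicity is verified in each case, any putative cyclic chain of nonzero noninvertible morphisms would return the monovariant to its starting value while strictly advancing it, a contradiction.
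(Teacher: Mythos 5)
Your setup (classification of indecomposables, one-dimensionality of $\Hom_\Lambda(P_j,P_i)$ governed by whether the unique path lies in $I$, and the absence of morphisms between stalks in different degrees) agrees with the paper's, but the core step of your argument fails: no total order refining the natural (componentwise) order on your pairs $(\alpha,\beta)$ can be strictly monotone under \emph{every} nonzero noninvertible morphism. Concretely, already for $\Lambda=k\Abb_2$ you have the morphism $P_2[0]\to P_1[0]$, whose pairs $(2,0)\to(1,0)$ strictly \emph{decrease} in the product order, and the canonical morphism $P_1[0]\to X_{12}=(P_2\to P_1)$, whose pairs $(1,0)\to(1,2)$ strictly \emph{increase}; in any refinement of the product order one of these goes up and the other goes down, so the claimed monovariant ``strictly increased by every morphism in (i)--(vi)'' cannot exist in the form you propose. (The same tension reappears between your families (iii)/(iv) and (i)/(i$'$), so it is not fixable by merely re-weighting the stalks.) The difficulty is intrinsic: morphisms genuinely flow in both directions between the stalk layer and the two-term--complex layer, so no function of the endpoints alone, compatible with the natural order on indices, can decrease along every single arrow of the Hom-quiver.

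The paper's proof circumvents exactly this point by not asking for monotonicity morphism-by-morphism. One first observes that a hypothetical cycle cannot consist only of stalk complexes (acyclicity of $\Lambda$, plus no morphisms between stalks in different degrees), hence it contains a two-term complex and can be cut into segments that begin and end at two-term complexes and pass only through stalks concentrated in a single degree. The index pair $(a,b)$ of the two-term complexes is then shown to weakly decrease along each such \emph{segment}, with at least one strict decrease (using the uniseriality fact that for $i\geq j\geq k$ with $\Hom_\Lambda(P_i,P_k)\neq 0$ nonzero maps $P_i\to P_j\to P_k$ compose nonzero, and that the relevant maps into or out of the stalk layer kill or avoid the differential); this contradicts closing up the cycle. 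So to repair your argument you should apply your order not to individual morphisms but to these segments (or equivalently, restrict attention to the two-term complexes occurring in the cycle), which also removes the need for the full case (vi) computation you left open; note also that your conditions (ii)--(v) are missing the requisite ``subpath not in $I$'' clauses, though for a segment-wise argument only the coarse index inequalities are needed.
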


\begin{proof}
	The linear order of $\Abb_n=(1\to
	2\to \cdots \to n)$
	induces a total order on the indecomposable projectives $\Lambda$-modules
	$P_1, \dots, P_n$, such that $\Hom_\Lambda(P_i,P_j)=0$ for $i<j$. When instead
	$i\geq j$, we have that $\Hom_\Lambda(P_i,P_j)\neq 0$ if and only if the
	unique path $j\leadsto i$ does not lie in $I$. In particular, this means that
	for every $i\geq j\geq k$, when $\Hom_\Lambda(P_i,P_k)\neq 0$ then any two
	nonzero morphisms $P_i\to P_j$ and $P_j\to P_k$ compose to a nonzero morphism.

	The indecomposable objects in $\K$ have one of three forms: they are either
	stalk complexes consisting of a single projective module, either in degree
	$-1$ or in degree $0$, or otherwise they are of the form $(P_i\to P_j)$,
	where $P_i$ and $P_j$ are nonzero projectives with $i\geq j$ and the differential
	is the unique nonzero morphism between them (up to scalars). Assume by
	contradiction that there is a cycle of nonzero, noninvertible morphisms
	between indecomposable objects of $\K$. Since $\Lambda$ is acyclic, this
	cycle cannot involve only stalk complexes in a single degree. Moreover, since
	there are no morphisms in either direction between stalk complexes of
	projectives in different degrees, this cycle must contain at least one
	two-term complex, as above. For the same reason, it is possible to section the
	cycle into a composition of paths $p$ of length at least $1$ with the following properties: (a) $p$
	starts and ends in two-term complexes, and (b) all the other objects appearing
	in $p$ are stalk complexes in the same degree. Such a path
	$p\colon (P_{i_0}\to P_{j_0})\leadsto (P_{i_1}\to P_{j_1})$ can be of one
	of three kinds, depending on whether it only consists of one morphism between
	two-term complexes, or otherwise on the degree of the stalk complexes in the middle. We
	are going to show that in each case we must have that $i_0\geq i_1$ and
	$j_0\geq j_1$, and at least one of these inequalities must be strict. This
	will then contradict the fact that the paths $p$ compose to a
	cycle.

	Case 1. We start from the case in which $p$ is just a nonzero, noninvertible morphism
	\[\begin{tikzcd}
		P_{i_0} \ar{d}{d_0} \ar{r}{f} & P_{i_1}\phantom{.} \ar{d}{d_1} \\
		P_{j_0} \ar{r}{g} & P_{j_1}.
	\end{tikzcd}\]
	At least one of $f$ and $g$ must be nonzero. We consider the three possible
	cases. If $f\neq 0=g$, then we have $i_0\geq i_1$. Moreover, by the two
	differentials we also have that $i_0\geq j_0$ and $i_1\geq j_1$. Assume we had
	$j_0<j_1$. Then we would have $i_0\geq i_1\geq j_1>j_0$, and since
	$d_0\in\Hom_\Lambda(P_{i_0},P_{j_0})\neq 0$, we would
	deduce that the composition $d_1f$ cannot vanish, by the observation
	above. This contradicts $g=0$. In a similar way, one excludes the case
	$f=0\neq g$, and therefore we conclude that $f\neq 0\neq g$, and $i_0\geq i_1$
	and $j_0\geq j_1$. If these inequalities were both equalities, then $f$ and
	$g$ would both be invertible, and the morphism of complexes an isomorphism.

	Case 2. Assume now that $p$ has the form:
	\[\begin{tikzcd}
		P_{i_0} \ar{d}{d_0} \ar{r}{f} & P_{k_1} \ar{d} \ar{r} & \cdots
		\ar{r} & P_{k_s} \ar{d} \ar{r}{g} & P_{i_1}\phantom{.} \ar{d}{d_1} \\
		P_{j_0} \ar{r} & 0 \ar{r} & \cdots \ar{r} & 0 \ar{r} & P_{j_1}.
	\end{tikzcd}\]
	where the morphisms in degrees $-1$ are necessarily nonzero. Observe that
	moreover $g$ cannot be an isomorphism, as we must have $d_1g=0$. This already
	yields $i_0\geq k_1\geq \cdots \geq k_s > i_1$. Now, assume by contradiction
	that $j_0<j_1$. Then we would have $i_0\geq k_s\geq i_1 \geq j_1 >j_0$, and since
	$0\neq d_0\in\Hom(P_{i_0},P_{j_0})$, we would deduce from the observation that
	the composition of the nonzero morphisms $d_1$ and $g$ cannot vanish, a contradiction.

	Case 3. The third case, depicted below, is dual to the previous one:
	\[\begin{tikzcd}
		P_{i_0} \ar{d}{d_0} \ar{r} & 0 \ar{d} \ar{r} & \cdots
		\ar{r} & 0 \ar{d} \ar{r} & P_{i_1} \phantom{.} \ar{d}{d_1} \\
		P_{j_0} \ar{r}{f} & P_{k_1} \ar{r} & \cdots \ar{r} & P_{k_s} \ar{r}{g} & P_{j_1}.
	\end{tikzcd}\]
\end{proof}

As a consequence of Proposition~\ref{prop:2-tilting-acyclic}, we obtain a sort of analogue of
Theorem~\ref{thm:tau-tilting-finite}, saying that over a 2-tilting acyclic
algebra, if only finitely many functorially finite torsion pairs induce derived
equivalence, then they are all the torsion pairs inducing derived equivalence.
Write $\ftors^d\Lambda:=\tors^d\Lambda\cap\ftors\Lambda$ for the set of
functorially finite torsion pairs inducing derived equivalence. We write
$\lessdot$ to denote a minimal strict inclusion.

\begin{prop}\label{prop:2-tilting-acyclic}
	Let $\Lambda$ be 2-tilting acyclic, and let $\sigma,
	\tau\in\tors^d\Lambda$ be torsion pairs giving derived equivalence. Assume
	furthermore that $\sigma$ is functorially finite. Then:
	\begin{enumerate}
		\item if $\sigma\leq \tau$, there exists $\sigma'\in\tors^d\Lambda$ functorially
			finite such that $\sigma\lessdot \sigma'\leq \tau$;
		\item if $\tau\leq \sigma$, there exists $\sigma'\in\tors^d\Lambda$ functorially
			finite such that $\tau\leq \sigma'\lessdot \sigma$;
	\end{enumerate}
\end{prop}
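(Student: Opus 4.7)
The strategy is to HRS-tilt at $\sigma$, apply Lemma~\ref{lemma:acyclic} inside the resulting module category---which by the 2-tilting acyclicity hypothesis is over an acyclic algebra---and transfer the conclusion back through the interval bijections from \S\ref{subsec:hrs-tilting}. I describe part~(1) in detail; part~(2) is dual, using the other interval isomorphism from \S\ref{subsec:hrs-tilting} and the cohereditary conclusion $({}^\bot S_f,\filt S_f)$ of Lemma~\ref{lemma:acyclic}.

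First, since $\sigma$ is functorially finite and induces derived equivalence, the associated two-term silting complex $T\in\D^b(\Lambda)$ is a two-term \emph{tilting} complex by \cite[Prop.~5.1]{psar-vito-18}, so $B:=\End(T)$ is derived equivalent to $\Lambda$ and $\Hcal_\sigma\simeq\mod(B)$ by \cite[Lemma~5.3]{koen-yang-14}. The 2-tilting acyclicity hypothesis then guarantees that $B$ is acyclic, so Lemma~\ref{lemma:acyclic} applies in $\mod(B)$. The HRS-tilting machinery provides an order-preserving bijection
\[
	[\sigma,\mathbf{1}]_{\tors\Lambda}\;\longleftrightarrow\;[\mathbf{0},\bar\sigma]_{\tors(\mod B)},\qquad \bar\sigma:=(\Fcal_\sigma,\Tcal_\sigma[-1]).
\]
Let $\bar\tau$ correspond to $\tau$. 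By the standard compatibility of realisation functors with iterated HRS-tilting, the realisation for $\tau$ factors as that for $\bar\tau$ followed by that for $\sigma$, so since two of these are triangle equivalences, so is the third, whence $\bar\tau\in\tors^d(\mod B)$.

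Next I would apply Lemma~\ref{lemma:acyclic} to $\bar\tau$, obtaining a simple $B$-module $\bar S$ with $(\filt\bar S,\bar S^\bot)\leq\bar\tau$ in $\tors^d(\mod B)$. A short check shows that $(\filt\bar S,\bar S^\bot)$ covers $\mathbf{0}$ in $\tors(\mod B)$: every nonzero torsion class in a length category contains a simple (a simple quotient of any nonzero object lies in it), and the only simple in $\filt\bar S$ is $\bar S$ itself, so $\filt\bar S$ admits no proper nonzero torsion subclass. Transporting the chain $\mathbf{0}\lessdot(\filt\bar S,\bar S^\bot)\leq\bar\tau$ along the order-preserving bijection produces $\sigma'\in\tors^d\Lambda$ with $\sigma\lessdot\sigma'\leq\tau$.

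The hard part will be to check that $\sigma'$ is functorially finite in $\tors\Lambda$. My plan is: $(\filt\bar S,\bar S^\bot)$ is functorially finite in $\tors(\mod B)$ (a hereditary torsion pair generated by a single finite-dimensional simple is always functorially finite), so by \cite[Lemma~5.3]{koen-yang-14} its HRS-tilt $(\mod B)_{(\filt\bar S,\bar S^\bot)}=\Hcal_{\sigma'}$ is again equivalent to a module category, and applying the same criterion in reverse shows that $\sigma'$ is functorially finite in $\tors\Lambda$. The delicate point is matching the interval correspondence with the silting-theoretic characterisation of functorial finiteness through two layers of HRS-tilting; this is routine in principle but is where the argument requires the most bookkeeping.
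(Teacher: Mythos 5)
Your proposal follows essentially the same route as the paper's proof: HRS-tilt at $\sigma$, use 2-tilting acyclicity to identify the new heart with $\mod(B)$ for an acyclic algebra $B$, apply Lemma~\ref{lemma:acyclic} to the transported pair $\bar\tau$, and carry the resulting atom back through the interval isomorphism of \S\ref{subsec:hrs-tilting}. The details you supply beyond the paper's write-up --- that $\bar\tau$ again induces derived equivalence (two-out-of-three for the realisation functors) and that $(\filt\bar S,\bar S^\bot)$ covers $\mathbf{0}$ in $\tors(\mod B)$ --- are correct and are indeed left implicit in the paper.

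The one step where your justification does not work as stated is the functorial finiteness of $\sigma'$. Reading \cite[Lemma~5.3]{koen-yang-14} ``in reverse'' is not a valid inference: that lemma says that the heart attached to a two-term silting complex (equivalently, to a functorially finite torsion pair) is a module category, not that a torsion pair whose HRS-heart is a module category must be functorially finite, and no such converse is available in the paper. Moreover, even granting that $\bar\sigma'=(\filt\bar S,\bar S^\bot)$ is functorially finite over $B$ (true, since $\filt\bar S=\Gen(B/BeB)$), its two-term silting complex is two-term \emph{with respect to} $B$, i.e.\ lies in $\add T\ast\add T[1]$ for the two-term tilting complex $T$ of $\sigma$, which over $\Lambda$ is a priori a three-term complex; so functorial finiteness does not transfer through the interval bijection by this route. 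The standard way to close the gap (which the paper also uses tacitly) is the result of Demonet--Iyama--Jasso \cite{demo-iyam-jass-19} that the Hasse arrows of $\tors\Lambda$ incident to a functorially finite torsion class coincide with the Hasse arrows of $\ftors\Lambda$ at that class: since $\sigma\in\ftors\Lambda$ and $\sigma\lessdot\sigma'$, the cover $\sigma'$ is automatically functorially finite (it is an irreducible mutation of the support $\tau$-tilting module, equivalently of the two-term silting complex, attached to $\sigma$ \cite{adac-iyam-reit-14}). With that substitution your argument is complete and coincides with the paper's.
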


\begin{proof}
	We prove (1). Consider the HRS-tilt
	$\Hcal_\sigma$ of $\mod\Lambda$ at $\sigma$.
	Since $\sigma$ is functorially
	finite and it induces derived equivalence by assumption, it is associated to a
	two-term tilting complex, and $\Hcal_\sigma$ is the category of modules over its
	endomorphism ring, which is acyclic by assumption.
	Recall from
	\S\ref{subsec:hrs-tilting} that there is a poset isomorphism between the
	intervals $[\sigma,\mod \Lambda]$ of $\tors\Lambda$ and $[0,\bar\sigma]$ of
	$\tors\Hcal_\sigma$. By Lemma~\ref{lemma:acyclic} applied in
	$\Hcal_\sigma\simeq\mod(\Gamma)$, there
	exists a torsion pair $\bar\sigma'\in\tors^d\Hcal_\sigma$ such that
	$0\lessdot\bar\sigma'\leq \bar \tau$. Through the isomorphism of posets, this
	yields an element $\sigma'\in\tors^d\Lambda$ such that $\sigma\lessdot\sigma'\leq
	\tau$. This proves (1). For (2), the argument is similar, using the isomorphism between the intervals
	$[0,\sigma]\subseteq\tors\Lambda$ and
	$[\bar\sigma,\Hcal_\sigma]\subseteq\tors\Hcal_\sigma$.
\end{proof}

\begin{cor}\label{cor:dij-derived}
	Let $\Lambda$ be a 2-tilting acyclic algebra. Then, if
	$\ftors^d\Lambda$ is finite, we have that $\ftors^d\Lambda=\tors^d\Lambda$. In
	this case, $\tors^d\Lambda$ is union of maximal chains in $\tors\Lambda$.
\end{cor}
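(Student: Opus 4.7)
The plan is to prove both assertions by iterating Proposition~\ref{prop:2-tilting-acyclic}(1), exploiting the finiteness of $\ftors^d\Lambda$ as a pigeonhole-style bound. Observe first that $\mathbf{0}=(0,\mod\Lambda)$ and $\mathbf{1}=(\mod\Lambda,0)$ both lie in $\ftors^d\Lambda$: they are functorially finite (corresponding to the zero and the regular support $\tau$-tilting module respectively), and each trivially satisfies the CHZ criterion of Theorem~\ref{thm:chz} (taking $t_0=a$, $t_1=f_0=f_1=0$ for $\mathbf{1}$, and dually for $\mathbf{0}$).

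For the inclusion $\tors^d\Lambda\subseteq\ftors^d\Lambda$, fix any $\tau\in\tors^d\Lambda$ and set $\sigma_0:=\mathbf{0}\in\ftors^d\Lambda$. If $\sigma_0<\tau$, Proposition~\ref{prop:2-tilting-acyclic}(1) applied to the pair $(\sigma_0,\tau)$ yields $\sigma_1\in\ftors^d\Lambda$ with $\sigma_0\lessdot\sigma_1\leq\tau$. Since $\sigma_1$ is again functorially finite and lies in $\tors^d\Lambda$, as long as $\sigma_i<\tau$ we may iterate, producing a strictly ascending chain
\[
\mathbf{0}=\sigma_0\lessdot \sigma_1\lessdot \sigma_2\lessdot\cdots\leq \tau
\]
inside $\ftors^d\Lambda$. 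By hypothesis $\ftors^d\Lambda$ is finite, so this chain must terminate at some $\sigma_n$; but termination forces $\sigma_n=\tau$, since otherwise Proposition~\ref{prop:2-tilting-acyclic}(1) would allow a further step. Hence $\tau=\sigma_n\in\ftors^d\Lambda$, proving $\tors^d\Lambda=\ftors^d\Lambda$.

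For the second claim, fix $\tau\in\tors^d\Lambda$. The construction above already gives a saturated chain $\mathbf{0}=\sigma_0\lessdot\cdots\lessdot \sigma_n=\tau$ entirely in $\tors^d\Lambda$. Starting now from $\tau$ (which is functorially finite by the previous step) and the upper bound $\mathbf{1}\in\ftors^d\Lambda$, the same iteration with Proposition~\ref{prop:2-tilting-acyclic}(1) — or symmetrically Proposition~\ref{prop:2-tilting-acyclic}(2) applied from the top — produces a saturated chain $\tau=\sigma_n\lessdot \sigma_{n+1}\lessdot\cdots\lessdot \sigma_N=\mathbf{1}$ in $\tors^d\Lambda$. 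Concatenating gives a saturated chain from $\mathbf{0}$ to $\mathbf{1}$ in $\tors\Lambda$ passing through $\tau$ and contained in $\tors^d\Lambda$, that is, a maximal chain of $\tors\Lambda$ witnessing $\tau$. Thus $\tors^d\Lambda$ is a union of maximal chains of $\tors\Lambda$.

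There is essentially no serious obstacle once Proposition~\ref{prop:2-tilting-acyclic} is in hand; the argument is the direct analogue of the finiteness-plus-induction step in the Demonet--Iyama--Jasso proof of Theorem~\ref{thm:tau-tilting-finite}. The one point that deserves a brief remark is that the relations $\sigma_i\lessdot\sigma_{i+1}$ produced by Proposition~\ref{prop:2-tilting-acyclic} are genuine coverings in the full lattice $\tors\Lambda$, not merely in the subset $\tors^d\Lambda$ or within the interval $[\sigma_i,\mathbf{1}]$: this follows because the Proposition produces a cover in the HRS-tilted heart via Lemma~\ref{lemma:acyclic}, and the poset isomorphism of intervals recalled in \S\ref{subsec:hrs-tilting} transports coverings to coverings inside a convex subinterval of $\tors\Lambda$.
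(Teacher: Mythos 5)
Your argument is correct and is essentially the paper's own proof: both iterate Proposition~\ref{prop:2-tilting-acyclic} starting from the functorially finite pairs $\mathbf{0}$ and $\mathbf{1}$ and use finiteness of $\ftors^d\Lambda$ to force the chains to terminate at $\tau$, yielding both $\ftors^d\Lambda=\tors^d\Lambda$ and a maximal chain of $\tors\Lambda$ through $\tau$ inside $\tors^d\Lambda$. The only difference is presentational (you argue directly by termination of the ascending chain, the paper phrases the first claim as a contrapositive producing an infinite chain), so there is nothing to add.
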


\begin{proof}
	We prove the contrapositive of the first claim. If
	$\tau\in\tors^d\Lambda\setminus\ftors^d\Lambda$,
	applying the Proposition repeatedly yields an infinite chain $\mathbf{0}\lessdot
	\sigma_1\lessdot\sigma_2\lessdot\cdots\leq \tau$, with
	$\sigma_i\in\tors^d\Lambda\cap\ftors\Lambda$. Thus $\ftors^d\Lambda$ is not
	finite.

	For the second claim, for any $\tau\in\tors^d\Lambda$, applying the Proposition
	repeatedly as we did above, we obtain a chain in $\tors^d\Lambda$:
	\[\mathbf{0}\lessdot \sigma_1\lessdot\sigma_2\lessdot\cdots\leq \tau\leq\cdots
	\lessdot\sigma^2\lessdot\sigma^1\lessdot\mathbf{1}.\]
	Since $\tors^d\Lambda=\ftors^d\Lambda$ is finite by assumption, we must have
	$\sigma_i\lessdot \tau\lessdot \sigma^j$ for some $i,j$, showing that $\tau$
	lies in a maximal chain of $\tors\Lambda$ entirely contained in
	$\tors^d\Lambda$.
\end{proof}

\begin{question}
	When $n$ is odd, the algebra $\Lambda_n$ of
	Example~\ref{ex:self-injective-two-vertices} also has the property that
	$\tors^d(\Lambda_n)(=\tors(\Lambda_n))$ is a union of maximal chains in
	$\tors(\Lambda_n)$. However, it is clearly not acyclic, let alone 2-tilting
	acyclic. Therefore, the following question remains open:

	\textit{Is it possible to characterise the algebras $\Lambda$ such that
	$\tors^d(\Lambda)$ is a union of maximal chains in $\tors(\Lambda)$?}

	Examples of such algebras are the 2-tilting acyclic algebras $\Lambda$ for which
	$\tors_d(\Lambda)$ is finite (for example, the representation-finite
	piecewise-hereditary algebras) and the $\tau$-tilting finite symmetric
	algebras.
\end{question}

%{{{ Bibliography 
\bibliographystyle{plain}
\bibliography{/home/serpav/doc/math/references.bib}

\begin{thebibliography}{10}

\bibitem{adac-iyam-reit-14}
Takahide Adachi, Osamu Iyama, and Idun Reiten.
\newblock {$\tau$}-tilting theory.
\newblock {\em Compos. Math.}, 150(3):415--452, 2014.

\bibitem{aiha-13}
Takuma Aihara.
\newblock Tilting-connected symmetric algebras.
\newblock {\em Algebr. Represent. Theory}, 16(3):873--894, 2013.

\bibitem{aiha-iyam-12}
Takuma Aihara and Osamu Iyama.
\newblock Silting mutation in triangulated categories.
\newblock {\em J. Lond. Math. Soc. (2)}, 85(3):633--668, 2012.

\bibitem{alno-rick-13}
Salah Al-Nofayee and Jeremy Rickard.
\newblock Rigidity of tilting complexes and derived equivalence for
  self-injective algebras.
\newblock {\em \rm Preprint arXiv:1311.0504}, 2013.

\bibitem{ange-laki-stov-vito-22}
Lidia Angeleri~H{\"{u}}gel, Rosanna Laking, Jan
  {\v{S}}{\v{t}}ov{\'{\i}}{\v{c}}ek, and Jorge Vit{\'{o}}ria.
\newblock Mutation and torsion pairs.
\newblock {\em Algebra Number Theory}, 19(7):1313--1368, 2025.

\bibitem{anti-zvon-22}
Mikhail Antipov and Alexandra Zvonareva.
\newblock Brauer graph algebras are closed under derived equivalence.
\newblock {\em Math. Z.}, 301(2):1963--1981, 2022.

\bibitem{asai-iyam-mous-paqu-25}
Sota Asai, Osamu Iyama, Kaveh Mousavand, and Charles Paquette.
\newblock Brick-splitting torsion pairs and left modularity.
\newblock {\em \rm Preprint arXiv:2506.13602}, 2025.

\bibitem{asse-sims-skow-06}
Ibrahim Assem, Daniel Simson, and Andrzej Skowro\'{n}ski.
\newblock {\em Elements of the representation theory of associative algebras.
  {V}ol. 1}, volume~65 of {\em London Mathematical Society Student Texts}.
\newblock Cambridge University Press, Cambridge, 2006.
\newblock Techniques of representation theory.

\bibitem{augu-duga-21}
Jenny August and Alex Dugas.
\newblock Silting and tilting for weakly symmetric algebras.
\newblock {\em Algebr. Represent. Theory}, 26(1):169--179, 2023.

\bibitem{ausl-gold-60}
Maurice Auslander and Oscar Goldman.
\newblock The {B}rauer group of a commutative ring.
\newblock {\em Trans. Amer. Math. Soc.}, 97:367--409, 1960.

\bibitem{bbd-81}
A.~A. Be\u{\i}linson, J.~Bernstein, and P.~Deligne.
\newblock Faisceaux pervers.
\newblock In {\em Analysis and topology on singular spaces, {I} ({L}uminy,
  1981)}, volume 100 of {\em Ast\'{e}risque}, pages 5--171. Soc. Math. France,
  Paris, 1982.

\bibitem{bren-butl-80}
Sheila Brenner and M.~C.~R. Butler.
\newblock Generalizations of the {B}ernstein-{G}elfand-{P}onomarev reflection
  functors.
\newblock In {\em Representation theory, {II} ({P}roc. {S}econd {I}nternat.
  {C}onf., {C}arleton {U}niv., {O}ttawa, {O}nt., 1979)}, volume 832 of {\em
  Lecture Notes in Math.}, pages 103--169. Springer, Berlin, 1980.

\bibitem{brun-herz-93}
Winfried Bruns and J\"{u}rgen Herzog.
\newblock {\em Cohen-{M}acaulay rings}, volume~39 of {\em Cambridge Studies in
  Advanced Mathematics}.
\newblock Cambridge University Press, Cambridge, 1993.

\bibitem{chen-han-zhou-19}
Xiao-Wu Chen, Zhe Han, and Yu~Zhou.
\newblock Derived equivalences via {HRS}-tilting.
\newblock {\em Adv. Math.}, 354:106749. 26, 2019.

\bibitem{demo-iyam-jass-19}
Laurent Demonet, Osamu Iyama, and Gustavo Jasso.
\newblock {$\tau$}-tilting finite algebras, bricks, and {$g$}-vectors.
\newblock {\em Int. Math. Res. Not. IMRN}, (3):852--892, 2019.

\bibitem{ford-17}
Timothy~J. Ford.
\newblock {\em Separable algebras}, volume 183 of {\em Graduate Studies in
  Mathematics}.
\newblock American Mathematical Society, Providence, RI, 2017.

\bibitem{foss-oppe-stai-24}
Didrik Fosse, Steffen Oppermann, and Torkil Stai.
\newblock Non-piecewise hereditary {N}akayama algebras.
\newblock {\em J. Algebra}, 660:852--881, 2024.

\bibitem{happ-88}
Dieter Happel.
\newblock {\em Triangulated categories in the representation theory of
  finite-dimensional algebras}, volume 119 of {\em London Mathematical Society
  Lecture Note Series}.
\newblock Cambridge University Press, Cambridge, 1988.

\bibitem{happ-reit-smal-96}
Dieter Happel, Idun Reiten, and Sverre~O. Smal\o.
\newblock Tilting in abelian categories and quasitilted algebras.
\newblock {\em Mem. Amer. Math. Soc.}, 120(575):viii+ 88, 1996.

\bibitem{happ-seid-10}
Dieter~and Happel.
\newblock Piecewise hereditary {N}akayama algebras.
\newblock {\em Algebr. Represent. Theory}, 13(6):693--704, 2010.

\bibitem{hrbe-pavo-23}
Michal Hrbek and Sergio Pavon.
\newblock Singular equivalences to locally coherent hearts of commutative
  noetherian rings.
\newblock {\em J. Algebra}, 632:117--153, 2023.

\bibitem{koen-yang-14}
Steffen Koenig and Dong Yang.
\newblock Silting objects, simple-minded collections, {$t$}-structures and
  co-{$t$}-structures for finite-dimensional algebras.
\newblock {\em Doc. Math.}, 19:403--438, 2014.

\bibitem{naka-nesb-38}
Tadasi Nakayama and Cecil Nesbitt.
\newblock Note on symmetric algebras.
\newblock {\em Ann. of Math. (2)}, 39(3):659--668, 1938.

\bibitem{pavo-25a}
Sergio Pavon.
\newblock Torsion-simple objects in abelian categories.
\newblock {\em J. Pure Appl. Algebra}, 229(1), 2025.

\bibitem{pavo-vito-21}
Sergio Pavon and Jorge Vit{\'{o}}ria.
\newblock Hearts for commutative {N}oetherian rings: torsion pairs and derived
  equivalences.
\newblock {\em Doc. Math.}, 26:829--871, 2021.

\bibitem{psar-vito-18}
Chrysostomos Psaroudakis and Jorge Vit{\'{o}}ria.
\newblock Realisation functors in tilting theory.
\newblock {\em Math. Z.}, 288(3-4):965--1028, 2018.

\bibitem{rick-91}
Jeremy Rickard.
\newblock Derived equivalences as derived functors.
\newblock {\em J. London Math. Soc. (2)}, 43(1):37--48, 1991.

\bibitem{ring-16}
Claus~Michael Ringel.
\newblock The elementary 3-{K}ronecker modules.
\newblock {\em \rm Preprint arxiv:1612.09141}, 2016.

\bibitem{skow-yama-11}
Andrzej Skowro\'{n}ski and Kunio Yamagata.
\newblock {\em Frobenius algebras. {I}}.
\newblock EMS Textbooks in Mathematics. European Mathematical Society (EMS),
  Z\"{u}rich, 2011.
\newblock Basic representation theory.

\bibitem{stan-10}
Don Stanley.
\newblock Invariants of {$t$}-structures and classification of nullity classes.
\newblock {\em Adv. Math.}, 224(6):2662--2689, 2010.

\bibitem{sten-75}
Bo~Stenstr{\"{o}}m.
\newblock {\em Rings of quotients}.
\newblock Die Grundlehren der mathematischen Wissenschaften, Band 217.
  Springer-Verlag, New York-Heidelberg, 1975.

\bibitem{thom-21}
Hugh Thomas.
\newblock An introduction to the lattice of torsion classes.
\newblock {\em Bull. Iranian Math. Soc.}, 47(suppl. 1):S35--S55, 2021.

\bibitem{xi-zhang-25}
Changchang Xi and Jin Zhang.
\newblock Self-injective algebras under derived equivalences.
\newblock {\em J. Pure Appl. Algebra}, 229(1), 2025.

\end{thebibliography}
%}}}
\end{document}